\documentclass[11pt, reqno]{amsart}
\usepackage{amsmath}
\usepackage{amsthm}
\usepackage{amssymb}
\usepackage{amscd}
\usepackage{amsbsy}
\usepackage{epsfig}
\usepackage{graphicx}
\usepackage{psfrag}
\usepackage{bbm}
\usepackage{geometry}
\usepackage[backref,bookmarks=false]{hyperref}
\usepackage[UKenglish]{babel}% http://ctan.org/pkg/babel
\usepackage[UKenglish]{isodate}% http://ctan.org/pkg/isodate

\usepackage{enumitem}
\theoremstyle{plain}% default
\newtheorem{theorem}{Theorem}[section]

\newtheorem{remark}[theorem]{Remark}
\newtheorem{proposition}[theorem]{Proposition}
\newtheorem{lemma}[theorem]{Lemma}

\theoremstyle{definition}
\newtheorem{definition}[theorem]{Definition}

\newtheorem*{lemma*}{Lemma}
\theoremstyle{definition}

\newtheorem{example}{Example}[section]
\newtheorem*{condition}{Condition}

\def\R{\ensuremath{\mathbb R}}
\def\N{\ensuremath{\mathbb N}}

\def\I{\ensuremath{{\bf 1}}}

\def\U{\ensuremath{\mathcal U}}

\def\M{\ensuremath{\mathcal M}}

\def\L{\ensuremath{\mathcal L}}

\def\P{\ensuremath{\mathbb P}}

\def\X{\mathcal{X}}
\def\ie{{\em i.e.}, }
\def\E{\mathbb E}

\def\supp{\ensuremath{\text{supp}}}

\def\eg{\ensuremath\textrm{e.g.}}

\def\eps{\varepsilon}

\numberwithin{equation}{section}

\begin{document}
\author[B.~ Zhao]{Boyuan Zhao}
\address{
Beijing International Center for Mathematical Research (BICMR), Peking University,\\ No.5 Yiheyuan Road,\\
Haidian District, \\
Beijing, China\\
100081
}
\email{boyuan.zhao@pku.edu.cn}
\title{On record behaviours of returns for interval maps}

\keywords{Return Process, Extreme Value Theory, Record Statistics, Interval Maps, Point Measures}
\subjclass[2020]{60G70, 37E05, 60G55, 37A25}

\begin{abstract}
Given a dynamical system $(\X,T)$ and an ergodic invariant measure $\mu$, a $\mu$-typical point $x\in \X$ under $T$ visits arbitrarily small neighbourhood of $x$. We will use techniques based on extreme value theory and Laplace functional for point process to show that, for interval maps with suitable mixing and anti-clustering conditions, the limiting distributions of the record times and values of the return process $Y_j(x)=-\log d(T^jx,x)$, after possible normalisation, has Poisson-type laws. 
\end{abstract}
\maketitle
\section{Introduction}\label{sec: introduction}
Let $(\X,T,\mu)$ be a dynamical system defined on the metric space $(\X,d)$, and $\mu$ is a $T$-invariant ergodic probability measure. For a $\mu$-typical $x\in\X$, one of the most fundamental theorems in dynamics, namely the \emph{Poincar\'e recurrence theorem}, asserts that for $\mu$-almost every $x\in\X$,
$$\liminf_{n\to\infty}d(T^nx,x)=0.
$$
An alternative interpretation is, for a $\mu$-typical point, there exists a subsequence $(n_k)_k$ such that $d(T^{n_k}x,x)$ is strictly decreasing. A rich research area of dynamics is devoted to establishing various quantitative limit laws in relation to recurrences, either regarding the return time or the return distance (see, \cite{BarSou01}, \cite{Col01}, \cite{Sau04,Sau06},  \cite{KirKunPer23}, \cite{HKKP}, \cite{GupHolNic11}, \emph{etc}.) obtaining results in various forms: almost sure convergences, dynamical Borel-Cantelli dichotomy results, fractal dimension-related formulae, extreme value theory etc. 
In this paper,
we are interested in the behaviour of the record value of return distances to the initial position $x$. More precisely, let 
\begin{equation}\label{eqn: def Y_j}
    Y_j(x):=-\log d\left(x,T^jx\right).
\end{equation}
This observable is well-defined for almost every $x$ non-periodic and all $j\in\N$. Set $\tau_1=1$ and inductively define the $k$-th record time $ \tau_k:=\inf\left\{m>\tau_{k-1}:\, Y_m>Y_{\tau_{k-1}}\right\}.$
We will develop a useful approach to determine the behaviour of $Y_{\tau_k}$. In particular, we will use results based on Extreme Value Theory to prove that the distribution of $\tau_k$ and $Y_{\tau_k}$ follow a particular limit law if the system is fast-mixing and satisfies some other conditions regarding clustering short returns (see Theorem~\ref{main thm: convergence of PP} below). More precisely, under suitable conditions, we prove that 
\begin{theorem}
    $\P(Y_{\tau_k}-\log n\le r)\to\int_0^r\frac{z^{k-1}}{(k-1)!}e^{-z}dz$. Equivalently, the $k$-th record after normalisation has a Gamma distribution. 
\end{theorem}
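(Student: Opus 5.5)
The plan is to deduce the statement from a point-process convergence result of the type announced as Theorem~\ref{main thm: convergence of PP}. We consider the space–time point process
\[
N_n:=\sum_{j\ge 1}\delta_{\left(j/n,\;Y_j-\log n\right)}
\]
on $(0,\infty)\times(-\infty,\infty]$. We show that, under $\mu$, it converges in distribution to a Poisson random measure $N$ with intensity $dt\times e^{-y}dy$. The Gumbel-type normalisation $\log n$ is natural here. For interval maps with an absolutely continuous invariant measure of density $h$, we have $\mu(d(x,T^jx)<e^{-(y+\log n)})\approx 2h(x)e^{-y}/n$. After integrating in $x$ and absorbing constants (or replacing $\log n$ by $\log n+\log c$ if needed), the expected number of exceedances of level $y$ up to time $nt$ is asymptotically $t e^{-y}$.

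The point-process convergence will be established through Laplace functionals. For nonnegative continuous compactly supported $f$ we show
\[
\E\, e^{-N_n(f)}\to \exp\Big(-\int\big(1-e^{-f(t,y)}\big)\,dt\,e^{-y}dy\Big).
\]
We split time into blocks of length roughly $n/k_n$ with small gaps. The mixing (decay of correlations) condition approximately factorises the Laplace functional over blocks, in the style of Leadbetter's $D(u_n)$ condition. An anti-clustering condition of $D'$ type shows that within a block at most one exceedance occurs, up to negligible probability. Then each block contributes $1-\int_{\text{block}}(1-e^{-f})\,e^{-y}dy\,dt+o(1/k_n)$.

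The main obstacle is that $Y_j$ depends on the starting point $x$, so the events $\{Y_j>u_n\}$ are not preimages of a fixed target set. We will handle this by first conditioning on $x$ lying in a small interval $I$ and approximating $\{d(x,T^jx)<r\}$ by hitting a ball around a fixed point of $I$. Short returns, i.e. $x$ close to periodic points of low period, must be excluded. Their measure is small because the set of points with $d(T^jx,x)<r_n$ for some $j\le (\log n)^{C}$ has measure $o(1)$, which follows from a standard estimate on the number of branches and the bounded density. This step is where the anti-clustering hypothesis is really used.

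Given the Poisson limit, the record statement follows directly. Records of $(Y_j)$ correspond to the upper records of the points of $N_n$ in the time order. Since $N$ is Poisson, the values of its records over the time window $(0,1]$ read downward form a Poisson process. Concretely, $\{Y_{\tau_k}-\log n\le r\}$ is asymptotically the event that, among the record levels attained within $[0,1]$, fewer than $k$ exceed $r$. The record process of a planar Poisson process with intensity $dt\,e^{-y}dy$, viewed through the levels above $r$ present by time $1$, is a Poisson process with mean $\int_r^\infty$ of the hazard $d(-\log(1-e^{-e^{-y}}))$-type. Matching the normalisation in the statement instead tracks the $k$-th record after time-scale $n$. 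The cumulative maximum up to time $n$ is Gumbel, and the successive record increments of a Gumbel-domain sequence are i.i.d.\ $\mathrm{Exp}(1)$ (R\'enyi's representation via $e^{-y}$ intensity). Hence $Y_{\tau_k}-\log n$ is asymptotically a sum of $k$ independent unit exponentials, which gives $\Gamma(k,1)$, whose density is $z^{k-1}e^{-z}/(k-1)!$. Passing from the point-process limit to this functional uses continuity of the record map on point configurations with distinct heights, which holds almost surely for $N$.
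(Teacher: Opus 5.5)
The point-process limit you aim for is false, and the step meant to prove it cannot work. Under $\mu$, $N_n$ does not converge to a Poisson random measure with deterministic intensity $dt\,e^{-y}dy$. Conditionally on the initial point $x$, the exceedance rate at level $\log n+y$ is $2\rho(x)e^{-y}$. Since $x$ is random, the limit is the Cox process of Theorem~\ref{thm: convergence to Cox}, with Laplace functional $\int\exp\bigl(-2\rho(x)\iint(1-e^{-f})e^{-y}\,dy\,dt\bigr)\rho(x)\,dx$. Integrating in $x$ matches only the mean measure. No deterministic shift $\log n+\log c$ can absorb the random shift $\log(2\rho(x))$, so your Laplace limit fails whenever $\rho$ is non-constant.

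The same issue breaks your block factorisation. Every event $\{d(x,T^jx)<e^{-y}/n\}$ involves the common coordinate $x$, so mixing cannot make different blocks asymptotically independent under $\mu$. Localising $x$ and replacing returns by hits to a fixed point is the right idea, but it must be done at scale $\ll 1/n$ (the paper uses cells of length $n^{-(1+\beta')}$), which leaves $\gg n$ targets. You then need two things:
\begin{itemize}
\item Poisson approximation of the hitting processes with rates that are uniform, or $\mu(\Delta)$-averaged, over all targets;
\item control of the cost of decoupling $x$ from the choice of target.
\end{itemize}
Pointwise-in-target results do not give this. The paper gets it from four ingredients:
\begin{itemize}
\item the averaged short-return bound of Lemma~\ref{lemma: A2 implies A2+} (a measure-$o(1)$ bound on the short-return set is not the right quantitative form);
\item cylinder approximation with $\psi$-mixing (A1+), to decorrelate blocks whose indicators have huge BV norm;
\item Le Cam's bound;
\item (LC), for a uniform rate in $n\mu(B(\zeta,e^{-z}/n))\to2\rho(\zeta)e^{-z}$.
\end{itemize}
Finally, the record statement survives the averaging over $x$ only because the record-value image of each conditional Poisson process has intensity $\lambda_V[a,b]=b-a$, independent of $\rho(x)$. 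Your plan never uses this, and it is what makes the averaged limit a genuine Poisson process.

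Your final step also does not yield $\Gamma(k,1)$. Starting from the maximum at time $n$ (Gumbel after centring) and adding $k$ i.i.d.\ $\mathrm{Exp}(1)$ record increments gives a Gumbel plus a $\Gamma(k,1)$ variable, not $\Gamma(k,1)$. Likewise, the event that fewer than $k$ record levels attained by time $1$ exceed $r$ counts records downward from $M_n$. For $k=1$ it is just $\{M_n-\log n\le r\}$, whose limit is the averaged Gumbel law of Theorem~\ref{thm: HT25}, not $1-e^{-r}$. Your hazard $-\log(1-e^{-e^{-y}})$ is that of an i.i.d.\ Gumbel sequence, not of the extremal process.

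Note also that with $\tau_1=1$ and $k$ fixed, $Y_{\tau_k}$ is a.s.\ finite, so $Y_{\tau_k}-\log n\to-\infty$. As Remark~\ref{rmk: discuss after main reults}(2) explains, the statement is about the $k$-th record exceeding the level $\log n$. The correct route is the paper's:
\begin{itemize}
\item apply the continuous mapping theorem to the record-value map (a.s.\ continuity needs local finiteness and no ties in either coordinate, not only distinct heights);
\item obtain $V_n\Rightarrow$ a Poisson process with Lebesgue intensity;
\item conclude $\P(V_n(0,r]\ge k)\to\P(\mathrm{Poi}(r)\ge k)=\P(\Gamma(k,1)\le r)$ by Lemma~\ref{lemma: Poisson and Gamma}.
\end{itemize}
In R\'enyi's terms, the base point is the threshold $\log n$, whose overshoot is itself $\mathrm{Exp}(1)$, not the Gumbel maximum.
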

To guide us on why we should expect such form of the limiting distribution, let us first look at the \emph{i.i.d.}\ case, since an observable on a fast mixing dynamical system behaves like a chain of \emph{i.i.d.}\ random variables after relatively few iterates.

\subsection{Motivation: independent with common distribution case}\label{ssec: iid evl}
Let $\{X_i\}_{i=0,1,\dots}$ be a sequence of \emph{i.i.d.}\ random variables taking values in $[0,1]$ with common distribution function $F$ and density  $f$. Inheriting the definition above, \emph{i.e.}\ $Y_j=-\log d(X_0,X_j)$, we observe the following:

{Condition on the event $X_0=x$,} then $(Y_j)_{j\ge1}$ is \emph{i.i.d.}\ and has distribution
$$\begin{aligned}
   G_x(y):=& \P(Y_j\le y)=\P(|X_j-x|\ge e^{-y})=1-\left(F(x+e^{-y})-F(x-e^{-y})\right)\\
   =&1-\int_{x-e^{-y}}^{x+e^{-y}}f(t)dt
\end{aligned}.$$
So $Y_j$ has density $g_x(y)=\frac d{dy}G_x(y)=e^{-y}[f(x+e^{-y})+f(x-e^{-y})]$.

By, for example \cite{Kar66,HolStr69,Res73}, using the hazard function,
\begin{equation}\label{eqn: dis of Y_tau}
    \begin{split}
        \P\left(Y_{\tau_k}\le y|X_0=x\right)&=\int_{-\infty}^{y}\frac{(-\log(1-G_x(z)))^{k-1}}{(k-1)!}dG_x(z)\\
        %&=\int_{0}^y\frac1{(k-1)!}\left(-\log\int_{x-e^{-z}}^{x+e^{-z}}f(\xi)d\xi\right)^{k-1}e^{-z}[f(x+e^{-z})+f(x-e^{-z})]dz\\
        %&=\int_{-\infty}^y\frac1{k!}-(\log f(x,y))^ke^{-y}(f(x+e^{-y})-f(x-e^{-y}))dy
    \end{split}
\end{equation}
Note that $\left(F(x+e^{-z})-F(x-e^{-z})\right)=\int_{x-e^{-z}}^{x+e^{-z}}f(\xi)d\xi$ is comparable with the probability of visiting an $r$-neighbourhood around $x$ with $r=e^{-z}$ (in dynamical system context, for a system $(\X,T,\mu)$ this is simply $\mu(B(x,e^{-z}))$). By the Lebesgue differentiation theorem, $F(x+e^{-z})-F(x-e^{-z})\approx 2f(x)e^{-z}$, $G_x(y)\approx 1-2f(x)e^{-y}$ and $Y_{\tau_k}-\log (2f(x))$ is asymptotically $\Gamma(k,1)$ (actually it is the special case of Gamma distribution, known as Erlang, since $k\in \N$) distributed, so, integrate over $x$ according to $X_0$,
$$
\P\left(Y_{\tau_k}-\log(2f(x))\le y\right)\approx\int_{-\infty}^{\infty}\int^{y}_{0}\frac {t^{k-1}}{(k-1)!}e^{-t}\, f(x)\ dt\ dx.
$$
The proof of records for \emph{i.i.d.}\ random variables follows from proving that the $k$-th record has the same distribution of the sum of $k$ number of \emph{i.i.d.}\ random variables with the exact same distribution, plus the fact that the independent increments between record has exponential distribution (see $\eg$ \cite{Res73}). We can establish the result via extreme value theory and point process in a rigorous way.

In classical extreme value theory, given a sequence of \emph{i.i.d.}\ random variables $\{\xi_i\}_i$ with common distribution $F$, if for each $\tau\ge0$, there is $u_n(\tau)$ a sequence of numbers such that
\begin{equation}\label{eqn: Leadbetter u_n}
    n(1-F(u_n(\tau)))\to\tau \text{ as $n\to\infty$},
\end{equation}
then (see for example \cite{LLR80}) the partial maxima $M_n=\max_{i\le n}\xi_i$
\begin{equation}\label{eqn: Leadbetter M_n convergence}
\P\left(M_n\le u_n(\tau)\right)\to e^{-\tau} \text{  in distribution as $n\to\infty$}.
\end{equation}
Thus, coming back to our previous question, again conditioning on $X_0=x$ again and set $u_n(\tau)=\tau+\log n$, 
\begin{align*}
    n(1-G_x(u_n))&=n \int_{x-e^{-\tau}/n}^{x+e^{-\tau}/n}f(t)dt=\frac{n2e^{-\tau}}{|B\left(x,e^{-\tau}/n\right)|}\int_{B\left(x,e^{-\tau}/n\right)}f(t)dt\\
    &\xrightarrow[n\to\infty]{}2e^{-\tau}f(x),
\end{align*}
where the last line holds almost surely by the Lebesgue differentiation theorem. This means, conditioning on $X_0=x$, the process 
$$M_n(t)=\left\{
\begin{aligned}&\max_{j\in[1,[tn]]}Y_j-\log n & \quad 1/n<t\le 1,\\
&Y_1-\log n &\quad 0<t\le 1/n.
\end{aligned}\right.$$ 
converges to an extremal process $Y(t)$ generated by the distribution $H_x(y)=e^{-2f(x)e^{-y}}$ (see \cite{ResRub73}). So integrating over the conditional event $\{X_0=x\}$,
\begin{equation}\label{eqn: iid maxima convergence} \P\left(\max_{1\le j\le n}Y_j\le \tau+\log n\right)\to \int e^{-2f(x)e^{-\tau}}f(x)dx.
\end{equation}

\begin{remark}\label{rmk: three typs of EVL}
    In classical extreme value theory the observable $Y_j$ may be defined more generally by $\varphi(d(x,X_j))$ where $\varphi$ may take one of the three form, such their normalised partial maximum converges weakly to Gumbel, Frechet and Weibull distribution, respectively:
    \begin{itemize}[noitemsep]
        \item $\varphi(t)=-\log(t)$, then $M_n(t)=M_{[nt]}-\log n$ converges weakly to an extremal process of Gumbel type;
        \item $\varphi(t)=t^{-\alpha}$ for some $\alpha>0$, then $M_n(t)=n^{-\alpha}M_{[nt]}$ converges to an extremal process of Fr\'echet type;
        \item $\varphi(t)=C-t^\alpha$ for some $C\in\R$ and $\alpha>0$, then $M_n(t)=n^\alpha\left(M_{[nt]}-C\right)$ converges weakly to an extremal process of Weibull type
    \end{itemize}
\end{remark}
For the scope of this paper, we will concentrate on the case corresponding to the Gumbel-type distribution. 
\begin{lemma}\label{lemma: Poisson and Gamma}
    Let $Z$ be a Poisson random variable with mean $\lambda$, then 
    $$\P(Z\ge k)=\int_0^\lambda \frac1{(k-1)!}t^{k-1}e^{-t}dt=\P(\Gamma(k,1)\le \lambda)$$
    where $\Gamma(k,\alpha)$ is a \emph{Gamma} distributed random variable with parameters $k$ and $\alpha$.
\end{lemma}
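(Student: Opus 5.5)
The plan is to show that both sides, viewed as functions of $\lambda\ge 0$, satisfy the same first-order differential identity and agree at $\lambda=0$. Set
$$A_k(\lambda):=\P(Z\ge k)=1-\sum_{j=0}^{k-1}e^{-\lambda}\frac{\lambda^j}{j!},\qquad B_k(\lambda):=\int_0^\lambda \frac{t^{k-1}}{(k-1)!}e^{-t}\,dt .$$
Both vanish at $\lambda=0$: for $A_k$ because $Z\equiv 0$ when $\lambda=0$ (the $j=0$ term equals $1$ and the other terms vanish), and for $B_k$ trivially. By the fundamental theorem of calculus, $B_k'(\lambda)=\frac{\lambda^{k-1}}{(k-1)!}e^{-\lambda}$.

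The only computation is $A_k'$. Differentiating termwise,
$$\frac{d}{d\lambda}\Big(e^{-\lambda}\frac{\lambda^j}{j!}\Big)=e^{-\lambda}\frac{\lambda^{j-1}}{(j-1)!}-e^{-\lambda}\frac{\lambda^j}{j!},$$
with the convention that the first term is absent for $j=0$. Summing over $j=0,\dots,k-1$ telescopes to $-e^{-\lambda}\lambda^{k-1}/(k-1)!$. Hence $A_k'(\lambda)=e^{-\lambda}\lambda^{k-1}/(k-1)!=B_k'(\lambda)$, and therefore $A_k\equiv B_k$ on $[0,\infty)$.

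As a cross-check, one could instead proceed by induction on $k$. Integrating by parts gives $B_{k+1}(\lambda)=B_k(\lambda)-e^{-\lambda}\lambda^k/k!$, and this matches $\P(Z\ge k+1)=\P(Z\ge k)-\P(Z=k)$, with base case $B_1(\lambda)=1-e^{-\lambda}=\P(Z\ge1)$.

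The final equality $B_k(\lambda)=\P(\Gamma(k,1)\le\lambda)$ is just the definition of the $\Gamma(k,1)$ law, whose density is $t^{k-1}e^{-t}/\Gamma(k)$ on $t>0$, together with $\Gamma(k)=(k-1)!$ for $k\in\N$. I expect no real obstacle. The only point needing care is the $j=0$ boundary term in the telescoping sum, or equivalently the sign bookkeeping in the integration by parts.
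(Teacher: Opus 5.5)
Your proof is correct and follows essentially the paper's route: the paper also shows that $\P(Z\ge k)$ vanishes at $\lambda=0$ and has derivative $e^{-\lambda}\lambda^{k-1}/(k-1)!$ via a telescoping sum. The only difference is that the paper differentiates the infinite tail $e^{-\lambda}\sum_{j\ge k}\lambda^j/j!$, whereas you differentiate the finite complement $1-\sum_{j=0}^{k-1}e^{-\lambda}\lambda^j/j!$, which sidesteps having to justify termwise differentiation of a series.
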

\begin{proof}
    Let $g(\lambda):=\P(Z\ge k)=e^{-\lambda}\sum_{j=k}^{\infty}\frac1{j!}\lambda^j$. It is clear that $g(0)=0$, and $g$ is differentiable in $\lambda$ for all $k\ge1$,  hence $g(\lambda)=\int_0^\lambda g'(t)dt$ where
    \begin{equation*}
    g'(\lambda)=\frac d{d\lambda}e^{-\lambda}\sum_{j=k}^\infty \frac1{j!}\lambda^j=-e^{-\lambda}\sum_{j=k}^\infty \frac1{j!}\lambda^j+e^{-\lambda}\sum_{j=k}^\infty\frac1{(j-1)!}\lambda^{j-1}=\frac{1}{(k-1)!}e^{-\lambda}\lambda^{k-1}.
    \qedhere\end{equation*}
\end{proof}
Recall the definitions of $\tau_k$, conditioning on $X_0=x$ again, we define the counting measures
$$R_n=\sum_{k=1}^{\infty}\delta_{\tau_k/n},\hspace{5mm} 
V_n:=\sum_{k=1}^{\infty}\delta_{\{(Y_{\tau_k}-\log n)/2f(x)\}}
$$
By \cite[Theorem 4, Theorem 5]{ResRub73}, \cite[Corollary 4.22, Corollary 4.23]{Res87} or \cite[Theorem 5.7.1, Theorem 5.8.1]{LLR80}, 
\begin{equation}
\label{eqn: weak convergence Resnick}
R_n\Rightarrow \mathcal R \hspace{5mm}\&\hspace{5mm}V_n\Rightarrow \mathcal V \hspace{3mm}\text{ both in $M_p(0,\infty)$,}
\end{equation} 
where $\mathcal R$ is a Poisson Random Measure (PRM) with intensity measure, $t^{-1} dt$, and $\mathcal V$ a PRM with intensity determined by $S(y)=y$. Recall the Gamma density function,  $\gamma_{k,1}(t)=\frac1{\Gamma(k)}t^{k-1}e^{-t}$ where $\Gamma(k)=(k-1)!$,
then for all $y>0$:
\begin{align*}\P((Y_{\tau_k}-\log n)/2f(x)\le y)&=\P(V_n(0,y]\ge k)\to \P(Poi(y)\ge k)\\
&=\int_{0}^{y}\frac1{\Gamma(k)}t^{k-1}e^{-t}dt=\P(\Gamma(k,1)\le y),
\end{align*} 
In the above formula we suppressed the dependence on $X_0=x$, so after averaging over $X_0=x$,
\begin{equation}\label{eqn: iid record value convergence}
\P\left(d\left(X_0,X_{\tau_k}\right)\ge e^{-2f(x)y}/n\right)\to\int\P\left(\Gamma(k,1)\le y\right)f(x) dx
\end{equation}

The above result is comparable with \eqref{eqn: dis of Y_tau}, with a caveat that $\tau_k$ represents the $k$-th record above the threshold $\log n$. Moreover, from \eqref{eqn: iid record value convergence} one deduces that due to the conditioning on initial condition $X_0$, the distribution of the record values will be averaged according to the density $f$ (in fact, $X_0$ needs not to have the same distribution as the other $X_i$'s as long as $X_0$ is defined on the same probability space), which is similar to introducing another level of stochasticity. We can interpret this via the perspective of a \emph{Cox process}.

\subsection{Cox process}\label{ssec: iid Cox}
In this subsection we show that the two-dimensional point process associated to the return value process \eqref{eqn: def Y_j} defined for \emph{i.i.d.}\ random variables form a  \emph{Cox process}, which we describe below.
\begin{definition}\label{defn: Cox proc}    
Let $E$ be a metric space. A \emph{Cox process} on $E$ is a point process on $E$ such that its distribution is a mixture of Poisson processes. In particular, its \textbf{random intensity measure} $\Lambda$ defined on the state space \(E\) (e.g.\ \(\R^d\) or \([0,\infty)\)) is given by the following:
\begin{itemize}
    \item a random measure $\Lambda$ is drawn according to some distribution.
    \item Then, conditional on $\Lambda = \lambda$, the point process $N$ is an inhomogeneous Poisson process with intensity measure $\lambda$.
\end{itemize}
\end{definition}
In simple terms, a Cox process is a doubly stochastic Poisson point process. It was first properly described in (hence named after the author of) \cite{Cox55}, and is well studied for a wide range of applied problems including biomathematics and flock modelling. We refer to \cite[\S13]{LasPen17} for a thorough discussion of its various aspects of statistical properties. The Cox process is also particularly relevant for our return statistics governed by dynamical systems: the point process for a hitting to a small neighbourhood of a fixed initial condition converges weakly to a Poisson process (see \S\ref{ssec: HT19 result} below), then heuristically, varying the initial position causes the `double' stochasticity. 

The \textbf{Laplace functional} of a Cox process is
\begin{equation}\label{eq:laplace-cox}
    \E\left[e^{-\int g \, dN}\right] = \E\!\left[\exp\!\left(-\int_E (1-e^{-g(z)})\,\Lambda_x(dz)\right)\right],
\end{equation}
for all non-negative continuous $g$ on the state space with compact support. By \cite[Theorem 4.2]{Kal83}, to prove convergence of point measures associated to stochastic processes, it is enough to prove the convergence of Laplace functionals for such test functions. Then we can recover the convergence described by \eqref{eqn: iid maxima convergence} via the following computation.
\begin{proposition}\label{prop: iid cox}
The point process $$N_n:=\sum_{j=1}^n\delta_{(j/n,Y_j-\log n)}$$
    $N_n\Rightarrow \mathcal N_{cox}$ where $\mathcal N_{cox}$ is a Cox point process with random intensity measure $\Lambda_x(dtdy)=2f(x)e^{-y}dydt$, and $\Lambda_x$ is drawn with density $f(x)$.
\end{proposition}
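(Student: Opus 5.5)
We prove convergence of Laplace functionals, which by \cite[Theorem 4.2]{Kal83} is enough. Fix a non-negative continuous $g$ with compact support in the state space $E=[0,\infty)\times(-\infty,\infty]$ (compactness in the $y$-direction means $g$ vanishes for $y\le -M$ for some $M$). We must show
\[
\E\left[e^{-\int g\,dN_n}\right]\longrightarrow \int \exp\left(-\int_E\left(1-e^{-g(t,y)}\right)2f(x)e^{-y}\,dy\,dt\right)f(x)\,dx ,
\]
which by \eqref{eq:laplace-cox} is the Laplace functional of $\mathcal N_{cox}$.

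\textbf{Step 1: condition on $X_0$.} Given $X_0=x$, the variables $Y_1,Y_2,\dots$ are i.i.d.\ with law $G_x$. Hence
\[
\E\left[e^{-\int g\,dN_n}\,\middle|\,X_0=x\right]=\prod_{j=1}^n\left(1-\E\left[1-e^{-g(j/n,\,Y_1-\log n)}\,\middle|\,X_0=x\right]\right).
\]
Put $a_{n,j}(x):=\int\left(1-e^{-g(j/n,\,y-\log n)}\right)dG_x(y)$. Since $0\le 1-e^{-g}\le \I_{\{y\le -M\}^c}$, we get $0\le a_{n,j}\le 1-G_x(\log n-M)=F(x+e^Me^{-\ldots})$ mass of the ball $B(x,e^M/n)$, which is $O(1/n)$ for almost every $x$, uniformly in $j$. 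Using $\log(1-a)=-a+O(a^2)$, the product equals $\exp\left(-\sum_j a_{n,j}(x)+o(1)\right)$.

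\textbf{Step 2: compute the limit of the sum.} Substitute $y=u+\log n$. Then the conditional law of $u$ has density $n^{-1}e^{-u}\left[f(x+e^{-u}/n)+f(x-e^{-u}/n)\right]$, so
\[
\sum_{j=1}^n a_{n,j}(x)=\int\left(\frac1n\sum_{j=1}^n\left(1-e^{-g(j/n,u)}\right)\right)e^{-u}\left[f(x+e^{-u}/n)+f(x-e^{-u}/n)\right]du .
\]
The Riemann sum converges uniformly in $u$ to $\int_0^\infty(1-e^{-g(t,u)})\,dt$, because $g$ is uniformly continuous and compactly supported. Convergence of the density factor to $2f(x)$ is the main obstacle: it holds only in the integrated sense. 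Near $u=+\infty$ the weight $e^{-u}$ makes the contribution small; on compact $u$-ranges, pointwise values of $f$ are meaningless. I therefore plan to integrate first against the $t$-averaged function $h(u)=\int(1-e^{-g(t,u)})\,dt$ in the form $\int h(u)\,dG_x(u+\log n)$. Writing this via the distribution function, integrating by parts against $h'$ (approximating $g$ by smooth functions if necessary), and applying the Lebesgue differentiation theorem to $n\,\mu(B(x,e^{-u}/n))\to 2f(x)e^{-u}$ at Lebesgue points, as in the motivation, gives the limit $\int h(u)\,2f(x)e^{-u}\,du$ for almost every $x$. Dominated convergence in $u$ is justified by the bound $n\,\mu(B(x,r/n))\le 2r\,\mathcal M f(x)$, where $\mathcal M f$ is the Hardy--Littlewood maximal function.

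\textbf{Step 3: average over $x$.} The conditional Laplace functionals lie in $[0,1]$ and converge for almost every $x$. Bounded convergence with respect to $f(x)\,dx$ then yields the displayed limit, which identifies $N_n\Rightarrow\mathcal N_{cox}$.
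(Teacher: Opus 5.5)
Your proposal is correct. Its skeleton is the paper's:
\begin{itemize}
\item condition on $X_0=x$ and factor the conditional Laplace functional into a product by independence;
\item replace the product by $\exp(-\sum_j a_{n,j})$ via the $\log(1-t)$ expansion, which is the paper's Ces\`aro-product lemma applied to a triangular array;
\item identify the limit of the sum;
\item integrate against $f(x)\,dx$ by bounded convergence.
\end{itemize}

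The genuine difference is in the key limit step. The paper writes the sum as $\frac1n\sum_i A_{i,n}(x)$, with $A_{i,n}(x)=\int\bigl[f(x+e^{-z}/n)+f(x-e^{-z}/n)\bigr](1-e^{-g(i/n,z)})e^{-z}\,dz$. It then asserts that $f(x\pm e^{-z}/n)\to f(x)$ for almost every $x$ ``by the Lebesgue differentiation theorem''. That is a statement about translates of $f$, i.e.\ a continuity property, and Lebesgue differentiation does not give it. For instance, if $f$ is proportional to the indicator of the complement of a suitably chosen open dense set of small measure, then $f(x+e^{-z}/n)\not\to f(x)$ on a set of $x$ of positive measure, even though $f$ is bounded. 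The paper also never checks the uniform bound $\sup_i A_{i,n}(x)<\infty$ that its product lemma requires.

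You avoid both issues by observing that, after the change of variables, the $e^{-u}\,du$-integral only sees the ball masses $\Phi_n(u)=n\,\mu(B(x,e^{-u}/n))$. Lebesgue differentiation at Lebesgue points gives $\Phi_n(u)\to 2f(x)e^{-u}$. The maximal-function bound $\Phi_n(u)\le 2e^{-u}\mathcal M f(x)$ then supplies both the domination for your integration by parts (or for a uniform step-function approximation of $h$) and the estimate $a_{n,j}=O(1/n)$. So the paper's version is shorter but is only justified for (a.e.) continuous $f$, whereas yours proves the statement for an arbitrary density. Your compactified $y$-space $(-\infty,\infty]$ is a mild strengthening (test functions need not vanish as $y\to+\infty$), and your tail estimate near $u=+\infty$ handles it.

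One slip to fix: $\frac1n\sum_{j=1}^n(1-e^{-g(j/n,u)})$ converges to $\int_0^1(1-e^{-g(t,u)})\,dt$, not to $\int_0^\infty$. The limiting intensity therefore lives on $t\in(0,1]$. With $E=[0,\infty)\times(-\infty,\infty]$, your displayed target formula is wrong for test functions whose $t$-support extends past $1$. Take $E=(0,1]\times(-\infty,\infty]$, or insert $\I_{(0,1]}(t)$ into the intensity.
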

\begin{remark}Note that we recover the limit in \eqref{eqn: iid maxima convergence} by the following:
\begin{align*}&
\lim_n\P(M_n-\log n\le \tau)=\lim_n\P(N_n((0,1]\times(\tau,\infty)=0)\\
&=\P(\mathcal N_{cox}((0,1]\times(\tau,\infty)=0)=\int \ e^{-\int_0^1\int_{\tau}^\infty 2f(x)e^{-y}dydt}f(x)dx.
\end{align*}
\end{remark}

We need the following lemma to prove such convergence.
\begin{lemma}\label{lemma: Cesaro average converges to a implies product converges to exp(-a)}
    Let $\{a_j\}_{j\in\N}$ be a sequence of non-negative real numbers. Then, if $\sup_{j\in\N}a_j\le M$ for some $M<\infty$, and $\frac1n\sum_{j=1}^n a_j\to a$ for some $a\in \R_{\ge0}$, then 
    $$
    \lim_{n\to\infty}\prod_{j=1}^n\left(1-\frac1n a_j\right) = 
    \lim_{n\to\infty}e^{-\frac1n\sum_{j=1}^n a_j}=e^{-a}.
    $$
\end{lemma}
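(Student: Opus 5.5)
We compare the product with the exponential term by term through logarithms, and then use the Cesàro hypothesis to identify the limit. Fix $n$ large enough that $M/n\le 1/2$. Then $0\le a_j/n\le 1/2$ for every $j\le n$, so each factor $1-a_j/n$ lies in $[1/2,1]$ and $\log(1-a_j/n)$ is well defined. We therefore write
$$
\prod_{j=1}^n\left(1-\frac{a_j}{n}\right)=\exp\left(\sum_{j=1}^n\log\left(1-\frac{a_j}{n}\right)\right).
$$

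The key step is the elementary bound $-u-u^2\le \log(1-u)\le -u$, valid for $0\le u\le 1/2$. It follows from the Taylor expansion: $\log(1-u)=-\sum_{m\ge1}u^m/m$, so $0\le -\log(1-u)-u\le \sum_{m\ge2}u^m\le u^2/(1-u)\le 2u^2$. To be safe we use the constant $2$ in place of $1$. Applying this with $u=a_j/n$ and summing over $j$ gives
$$
\left|\sum_{j=1}^n\log\left(1-\frac{a_j}{n}\right)+\frac1n\sum_{j=1}^n a_j\right|\le 2\sum_{j=1}^n\frac{a_j^2}{n^2}\le \frac{2M^2}{n}\xrightarrow[n\to\infty]{}0 .
$$
Uniform boundedness of the $a_j$ is exactly what is needed here: it makes the quadratic error of order $1/n$.

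Since $\exp$ is continuous, the product and $e^{-\frac1n\sum_{j\le n}a_j}$ therefore have the same limit, provided either limit exists. By hypothesis $\frac1n\sum_{j\le n}a_j\to a$, so continuity of $\exp$ again gives $e^{-\frac1n\sum_{j\le n}a_j}\to e^{-a}$. This proves both equalities in the statement.

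There is no real obstacle. The only care needed is to take $n$ large enough that every factor is positive, so the logarithm is legitimate, and to verify the quadratic error bound uniformly in $j$. Both follow immediately from $\sup_j a_j\le M$.
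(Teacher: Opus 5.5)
Your proof is correct and follows the paper's argument: once $M/n\le 1/2$, take logarithms, bound $|\log(1-t)+t|$ by a constant times $t^2$ using the Maclaurin series, and sum over $j$. You also spell out the final step $\sum_{j\le n}(a_j/n)^2\le M^2/n\to 0$, which the paper leaves implicit.
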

\begin{proof}
    As $|a_j|\le M$ for all $j$, then for big $n$, there is $|a_j/n|\le \frac 12$ for all $j$. By calculating the MacLaurin series of $\log (1-t)$, there is some $M'<\infty$ such that for all $0\le t\le \frac12$,
    $$
    \left|t+\log (1-t)\right|= \left|t-t-\frac{t^2}{2}-\frac{t^3}3-\cdots\right|\le M' t^2.
    $$
Therefore,
\begin{equation*}
    \log\prod_{j=1}^n\left(1-\frac1n a_j\right)=\sum_{j=1}^n\log\left(1-\frac1n a_j\right)=-\frac1n\sum_{j=1}^n a_j\pm M'\sum_{j=1}^n\left(\frac1n a_j\right)^2.\qedhere
\end{equation*}
\end{proof}
\begin{proof}[Proof of Proposition~\ref{prop: iid cox}]
    We will calculate the Laplace transform of $N_n$, as in \cite[Proposition 3.5]{Res87} tells us that the Laplace transform uniquely determines a point measure. Let $g$ be a non-negative continuous function with compact support (without loss of generality suppose the support of $g$ on the $z$ entry is contained in $[a,b)\subset \R_{\ge0}$).% and let $N_n^x$ be the point measure corresponding to the event $\{X_0=x\}$. 
    Then, by independence and identical distribution,

$$
    \E\left[e^{-\int g\,dN_n}\right] =\E\left[e^{-\sum_{i=1}^ng(i/n,Y_i-\log n)}\right]= \int\left(\prod_{i=1}^n\E\left[e^{-g(i/n,Y_i-\log n)}|X_0=x\right]\right) \ f(x)dx.
$$
Now we continue to calculate $\E\left[e^{-g(i/n,Y_i-\log n)}\right]$, which is always integrable due to continuity and compact support. Let $Z_n=Y_1-\log n$, then $F_{Z_n}=\P(Z_n<z)=\P(|X_1-x|>e^{-z}/n)=1-\int_{x\pm e^{-z}/n}f(\xi)d\xi$. So $$f_{Z_n}(z)=e^{-(z+\log n)}\left[f(x+e^{-(z+\log n)})+f(x-e^{-(z+\log n)})\right].$$
Therefore, 
\begin{align*}\E\left[e^{-g(i/n,Y_1-\log n)}|X_0=x\right]&=1-\E\left[1-e^{-g(i/n,Y_1-\log n)}|X_0=x\right]\\
&=1-\int (1-e^{-g(i/n,z)})f_{Z_n}(z) \ dz\\
&=1-\frac1n \int \left[f(x+e^{-z}/n)+f(x-e^{-z}/n)\right](1-e^{-g(i/n,z)})e^{-z}dz\\
&=:1-\frac1n A_{i,n}(x). 
\end{align*}
By the Lebesgue Differentiation Theorem, for almost every $x$, $f(x\pm e^{-z}/n)\to f(x)$ as $n\to\infty$, hence 
$$\lim_{n\to\infty}\frac 1n\sum_{i=1}^n A_{i,n}(x)=2 f(x)\iint\left(1-e^{-g(t,z)}\right)e^{-z}dzdt$$

By Lemma~\ref{lemma: Cesaro average converges to a implies product converges to exp(-a)}, 
\begin{align*}
\lim_{n\to\infty}\E\left[e^{-\int g\,dN_n}|X_0=x\right]= \lim_{n\to\infty}\prod_{i=1}^n\left(1-\frac1n A_{i,n}\right)=\exp\left(-2f(x)\iint\left(1-e^{-g(t,z)}\right)e^{-z}dzdt \right)
\end{align*}
Lastly, applying the Dominated Convergence Theorem,
\begin{align*}
    \lim_{n\to\infty}\E\left[e^{-\int g\,dN_n}\right]&=\int\lim_{n\to\infty}\E\left[e^{-\int g\,dN_n}|X_0=x\right]f(x)dx\\
    &=\int\exp\left(-2f(x)\iint\left(1-e^{-g(t,z)}\right)e^{-z}dzdt \right)f(x)dx\qedhere
\end{align*}
\end{proof}
The beauty in establishing the convergence of $N_n$ is that we can then apply the record-extraction mappings to $N_n$ and $\mathcal N$ then apply the Continuous Mapping Theorem after checking the necessary almost sure continuity conditions, then the analogous formula as \eqref{eqn: weak convergence Resnick} can be obtained for the return process for the \emph{i.i.d.}\ process. Especially, the particular proof for deriving such convergence for record time/value processes does not require conditioning argument. We will formally describe this in dynamical system context in \S~\ref{sec: proof of record point process convergence}.

\subsection{Hitting problem in dynamical setting}\label{ssec: HT19 result}
From our observations in the \emph{i.i.d.}\ quantitative recurrence case, one sees that the proof was based on conditioning on the initial position $X_0=x_0$, after which the remaining calculations behave like a ball-hitting problem, and we apply independence among the random variables. Then, using EVL results for stationary processes followed by integrating over the whole space, one obtains an averaged Gumbel distribution for the partial maxima process and  an averaged-Gamma distribution for the records after normalisation. For dynamical systems, to convince ourselves that the record values do converge to the conjectured averaged-Gamma distribution, it is useful and reassuring to verify if the point measures associated with a hitting process with a fixed reference point still converges to a PRM. Indeed, such result holds.

Fix a generic $\tilde x\in\X$ and set $Y^{\tilde x}_j=-\log d(f^jx,\tilde x)$, also the partial maximum $M_j^{\tilde x }=\max\{Y^{\tilde x}_1,\dots,Y^{\tilde x}_j\}$. For $\mu$-almost every $\tilde x$, $\{Y^{\tilde x}_j \}_j$ is a stationary process, and by Lebesgue differentiation theorem, $n\mu\left(Y_j^{\tilde x}>\tau+\log n\right)\to2\rho(\tilde x)e^{-\tau}$ if $\rho$ is bounded. By \cite[Theorem 2.3, Theorem 3.1]{HolTod19}, under suitable mixing and recurrence conditions (the paper in question called them $\mathcal D_r(u_n^{(k)})$ and $\mathcal D'_r(u_n^{(k)})$, motivated by Leadbetter's works) satisfied by a wide range of maps enjoying fast decay of correlations, the process
$$
M^{\tilde x}_n(t):=\left(M^{\tilde x}_{[nt]}-\log n \right)
$$
converges to an extremal-$F$ process  $\mathbbm D(0,\infty)$ (with respect to the Shorokhod $J_1$ topology, see \cite[\S1]{HolTod19} for more details) generated by the distribution $F(u)=e^{-2\rho(\tilde x)e^{-u}}$. Moreover, the counting measures 
$$
R_n=\sum_{j=1}^{\infty}\delta_{j/n}\cdot \I_{\left\{Y^{\tilde x}_j >M_{j-1}^{\tilde x}\right\}},\hspace{5mm} 
V_n:=\sum_{k=1}^{\infty}\delta_{\left(M^{\tilde x}_n({\tau_k/n})\right)}
$$
both converge in $M_p(0,\infty)$ to PRM with intensity $1/t\,dt$ and $\lambda[a,b]=b-a$ respectively. In this case, 
\begin{align*}
    \lim_{n\to\infty}\P\left(\left(Y^{\tilde x}_{\tau_k}-\log n\right)\le s\right)&=\lim_{n\to\infty}\left(V_n[0,s]\ge k\right)=\sum_{j=k}^{\infty}\frac1{j!}s^je^{-s}\\
    &=\int_0^s\frac1{(k-1)!}t^{k-1}e^{-t}\,dt=\P(\Gamma(k,1)\le s)
\end{align*}
So the normalised record values (that are above 0) do follow a Gamma distribution in limit\footnote{See Remark~\ref{rmk: discuss after main reults} for further discussion}.

\subsection{Statistics of records for recurrence in dynamics}
The stationary nature of the problem can no longer directly facilitate the analysis when we consider the return process $Y_j(x)=-\log d(T^jx,x)$ for a dynamical system. However, various remarkable progress regarding EVL type convergence has been made in this non-stationary setting.
For dynamical systems satisfying Young's condition, \cite[Theorem 4.1]{Col01}established a formula similar to \eqref{eqn: iid maxima convergence}, and in \cite[Theorem 2.3]{HolTod25}, for suitable choices of $u_n$, the number of returns to $B(x,e^{-u_n})$ converges to a Poisson-like distribution. We give a brief introduction to their results.

Let $(\X,T,\mu)$ be a dynamical system over the unit interval with domain of $T=\X\subset[0,1]$
that satisfies the following assumptions.
\begin{itemize}
    \item[(A1)]\label{cond: A1} $\mu$ is a $T$-invariant probability that has exponential decay of correlations for $BV$ against $L^1(\mu)$ observables, that is, there exists $C_T,\eta>0$ for all $f\in \mathcal{BV}$ and $g\in L^1(\mu)$, for all $k\in\N$,
    $$\left|\int f\cdot g\circ T^kd\mu-\int f\,d\mu\int g\,d\mu\right|\le C_T\|f\|_{BV}\|g\|_{L^1(\mu)}e^{-\eta k},$$
    where for a function $\varphi:\X\to \R$,
    $\|\varphi\|_{BV}=\|\varphi\|_{\infty}+TV(\varphi).$
    
    \item[(A2)]\label{cond: A2} suppose $r=r_n\asymp n^{-a}$ for some $a>0$. Then there exists $C_0,\beta_0>0$ such that for $E_j(r):=\{x\in\X:d(x,T^jx)<r\}$ and all $j\le (\log n)^2$, $\mu(E_j(r))\le C_0r^{\beta_0}$. Consequently, there exists $C_1,\beta_1>0$ such that $\mu(\widetilde E(r))\le C_1r^{\beta_1}$ where $\widetilde E(r)=\cup_{j=1}^{(\log n)^2}E_j(r)$.
\end{itemize}
In the language of \cite{HolTod25}, $(\X,T,\mu)$ is said to be \emph{hyperbolic recurrent} if it satisfies (A1) (A2) and $\mu$ is an invariant ergodic measure with density $\rho\in BV$. This implies $\|\rho\|_\infty<\infty$.
\begin{theorem}\label{thm: HT25}
    For a hyperbolic recurrent dynamical system (\cite[Theorem 2.3]{HolTod25}) $(\X,T,\mu)$, or a $C^2$ interval map satisfying Young's condition (\cite[Theorem 4.1]{Col01}) , 
    \begin{equation*}\label{eqn: averaged Gumbel law}
        \lim_{n\to\infty}\P\left(\max_{1\le j\le n}Y_j(x)\le \tau+\log n\right)=\int e^{-2\rho(x)e^{-\tau}}\rho(x)\,dx
    \end{equation*}
    \begin{equation*}\label{eqn: averaged Poisson law}
        \lim_{n\to\infty}\P\left(\#\left\{j\le n:d(T^jx,x)\le e^{-\tau}/n\right\}=k\right)\to\int\frac{(2\rho(x)e^{-\tau})^{k}}{k!}e^{-2\rho(x)e^{-\tau}}\rho(x)dx
    \end{equation*}
\end{theorem}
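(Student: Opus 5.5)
Theorem~\ref{thm: HT25} is quoted from \cite[Theorem 2.3]{HolTod25} and \cite[Theorem 4.1]{Col01}, so the proof here is a citation. Still, the argument I would give in the present setting mimics Proposition~\ref{prop: iid cox}, with decay of correlations replacing independence. First I fix the threshold $u_n=\tau+\log n$, so $r_n=e^{-\tau}/n$, and the return sets $U_n(x)=B(x,r_n)$. Because the target moves with $x$, I would discretise: partition $[0,1]$ into intervals $I$ of length $\delta_n$, with $r_n\ll\delta_n$ and $\delta_n\to0$ polynomially. For $x\in I$ I compare $\{d(T^jx,x)<r_n\}$ with the hitting event $\{T^jx\in B(c_I,r_n)\}$ for the centre $c_I$. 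The error is controlled by thin annuli of width $\delta_n$. Summed over $j\le n$ they have measure $O(n\delta_n)\cdot\|\rho\|_\infty$, so I need $\delta_n\ll 1/n$ while still $\delta_n\gg r_n$. With $r_n=e^{-\tau}/n$ this forces a two-scale argument, and I expect to choose $\delta_n=r_n\eps$ and let $\eps\to0$ at the end, using that $\rho\in BV$ has finitely many large jumps.

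Inside each cell $I$ the problem becomes a stationary hitting problem with the extra restriction $x\in I$. For the count $\#\{j\le n: T^jx\in B(c_I,r_n)\}$ restricted to $I$, I would run the standard blocking argument. Split $[1,n]$ into $p_n$ blocks of length $n/p_n$ separated by gaps of length $t_n\asymp(\log n)^2$. Then use (A1), with $f=\I_I\cdot\I_{\text{block events}}$ of BV norm $O(\text{number of intervals})$, to factorise the probabilities across blocks up to errors $e^{-\eta t_n}$ times polynomial factors, which are negligible. This gives the law of rare events for the number of entrances, with intensity $n\mu(B(c_I,r_n))\approx 2\rho(c_I)e^{-\tau}$ by Lebesgue differentiation for a.e.\ $c_I$.

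The anti-clustering step is to rule out two visits within one block, i.e.\ short returns. For returns at times $j\le(\log n)^2$ this is exactly (A2): $\mu(\widetilde E(r_n))\le C_1r_n^{\beta_1}\to0$, which removes the initial short returns of $x$ to itself. For intermediate times one uses (A1) once more, conditioned on the first visit, together with $\|\rho\|_\infty<\infty$. Summing over cells weighted by $\mu(I)$ then yields
\[\int\frac{(2\rho(x)e^{-\tau})^k}{k!}e^{-2\rho(x)e^{-\tau}}\rho(x)\,dx.\]
The maximum statement is the case $k=0$, since $\{\max_{j\le n}Y_j\le\tau+\log n\}=\{\#=0\}$.

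I expect the main obstacle to be the first-return/self-return interaction: near $x$ the orbit initially stays close, so the events for $j$ small are correlated with the conditioning $x\in I$. The resolution is (A2) for $j\le(\log n)^2$, combined with the decay in (A1) for larger $j$.
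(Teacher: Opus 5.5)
The paper does not prove Theorem~\ref{thm: HT25}; it quotes it from \cite{HolTod25} and \cite{Col01}, so your opening sentence is exactly what the paper does. The argument you then sketch, however, breaks down at the blocking step under (A1)--(A2) alone. You apply (A1) with $f=\I_I\cdot\I_{\text{block events}}$ and treat its BV norm (the number of intervals) as polynomial in $n$. It is not polynomial. Already for the doubling map, $T^{-j}B(c_I,r_n)$ has $2^j$ components, and about $2^j|I|$ of them meet $I$. So the indicator of a block event over a block of length $L$ has BV norm of order $2^{L}|I|$ or worse. The gaps are negligible only if $p_nt_n=o(n)$, which forces $L=n/p_n\gg t_n=(\log n)^2$, so $e^{-\eta t_n}$ cannot absorb this factor. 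The paper points out precisely this obstruction in Step~3 of Lemma~\ref{lemma: TV approximation}, which is why it imposes the $\psi$-mixing condition (A1+) for its own blocking argument. (A1+) is not among the hypotheses of the statement you are proving. Under (A1), the BV slot may only ever contain a single-time indicator (of $I$, or of a ball), with every multi-time event placed in the $L^1$ slot. Concretely: discard $j\le(\log n)^2$ using (A2). Then decorrelate $\I_I$ from the \emph{entire} future count $\#\{(\log n)^2<j\le n: T^jx\in B(c_I,r_n)\}$ in a single step, at total cost $\lesssim\#\{I\}\,e^{-\eta(\log n)^2}$. Finally, treat the stationary hitting count for $B(c_I,r_n)$ by the usual Leadbetter-type inductive argument, in which the BV function is always $\I_{B(c_I,r_n)}$.

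The anti-clustering step has a second hole. Applied directly, (A2) only removes the self-returns $d(T^jx,x)<r_n$ with $j\le(\log n)^2$. The Poisson limit also needs that the orbit does not visit $B=B(c_I,r_n)$ at two times $j<j'$ with $j'-j\le(\log n)^2$ and $j$ arbitrary in $[1,n]$. (A1) does not give this: it yields $\mu(B\cap T^{-k}B)\le\mu(B)^2+3C_T\mu(B)e^{-\eta k}$, and summed over $1\le k\le(\log n)^2$ and $n$ starting times this bound is of order one, not $o(1)$. Nor does (A2) for a fixed centre: by invariance it gives only a bound of order $n(\log n)^2r_n^{\beta_0}$, which does not vanish when $\beta_0\le 1$. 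The missing idea is to average over centres: $\int\mu(B(\zeta,r)\cap T^{-k}B(\zeta,r))\,d\mu(\zeta)\le\int_{E_k(2r)}\mu(B(x,r))\,d\mu(x)\lesssim r^{1+\beta_0}$ (Lemma~\ref{lemma: A2 implies A2+}), which supplies the extra factor $r_n\asymp n^{-1}$. For an individual centre, e.g.\ one within $\delta_n$ of a fixed point, this clustering is genuinely of order one, so the cell-by-cell Poisson approximation is not uniform in $I$. The clustering error must therefore be summed against the weights $\mu(I)$ before it is estimated, which the single-step decorrelation of $\I_I$ above makes possible. Two smaller points. First, nothing requires $\delta_n\gg r_n$, which, as you noticed, clashes with $\delta_n\ll 1/n$. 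Taking $\delta_n=n^{-1-\beta'}$ as in the paper makes the annulus error $O(n^{-\beta'})$, with no $\eps\to0$ limit and no use of the jumps of $\rho$. Second, the centres $c_I$ move with $n$, so ``Lebesgue differentiation at a.e.\ $c_I$'' should be replaced by a two-step argument. Compare $n\mu(B(c_I,r_n))$ with $n\mu(B(x,r_n))$ for $x\in I$ (error $O(n\delta_n)$), then apply Lebesgue differentiation in $x$ together with dominated convergence.
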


The above formulas say that the among the first $n$ iterates, the largest log value of return distance to initial position converges to an averaged Gumbel law after normalisation, similar to the case in \S\ref{ssec: iid evl}. In this paper we wish to obtain a more detailed statistics for returns; in particular, our aim is to verify limiting laws similar to \eqref{eqn: weak convergence Resnick}, which encode information for the record behaviours of returns. We will now describe the class of maps for which our mains theorem apply.

\subsection*{Notations}
We will use the following notations. For a set or an event $E$, its indicator function is denoted by $\I_E$. For any real numbers $A,B,C$, write $A=B\pm C$ if $B-C\le A\le B+C$; for any sequence $a_n$, denote $a_n\lesssim n^{-\alpha}$ or $a_n=\mathcal O(n^{-\alpha})$ if there exists a uniform constant $K$ such that $a_n\le Kn^{-\alpha}$ for all $n$ large enough, and $a_n\asymp b_n$ if $a_n\lesssim b_n$ and $a_n\gtrsim b_n$.

\section{Main results}\label{sec: main results}
We now describe the precise setting of the maps our main results apply to. Let $\mathcal{I}$ be a countable index set, and $\mathcal{P}=\{P_a\}_{a\in\mathcal{I}}$ a collection of subintervals in $\X$ with disjoint interiors covering $\X$. We say $T$ is a \textit{uniformly expanding Markov map} if\\
(1) for any $a\in\mathcal I$, $T_a:=T|_{P_a}$ is continuous, injective and $T(P_a)$ is a union of elements in $\mathcal P$; \\
(2) there is a uniform constant $\upsilon>1$ such that for all $a\in\mathcal I$, $|DT_a|\ge\upsilon$. 

The \emph{repeller} of $T$, denoted by $\Xi$, is the collection of points with all their forward iterates contained in $\mathcal P$, namely
    $$
    \Xi:=\left\{x\in\X:T^k(x)\in\bigcup_{a\in\mathcal I}P_a\text{ for all $k\ge0$}\right\}.
    $$
The invariant ergodic measure $\mu$ in question is supported on $\Xi$. There is a shift system associated to $T$: let $M$ be an $\mathcal{I}\times \mathcal{I}$ matrix such that $M_{ab}=1$ if $T(P_a)\cap P_b\neq \emptyset$ and 0 otherwise. Let $\Sigma$ denote the space of all \emph{infinite admissible words}, \ie
$$
\Sigma:=\left\{{x}=(x_0,x_1,\dots)\in\mathcal I^{\N_0}:M_{x_k,x_{k+1}}=1,\,\forall \,k\ge0\right\}.
$$
The natural projection map $\pi:\Sigma\to\Xi$ is defined by
$$
{x}=\pi\left(x_0,x_1,\dots\right) \text{ if and only if } x\in\bigcap_{i=0}^{\infty}T^{-i}P_{x_i}.
$$ 
The dynamics on $\Sigma$ is the left shift $\sigma:\Sigma\to\Sigma$ given by $\sigma(x_0,x_1,\dots)=(x_1,x_2,\dots)$, then $\pi$ defines a semi-conjugacy $T\circ\pi=\pi\circ\sigma$. The corresponding symbolic measure $\tilde\mu$ of $\mu$ is given by $\mu=\pi_*\Tilde\mu$, \ie for all Borel-measurable set $B\in\mathcal B([0,1])$, $\mu(B)=\tilde\mu\left(\pi^{-1}B\right)$.

Denote $\mathcal{P}^n:=\bigvee_{j=0}^{n-1}f^{-j}\mathcal{P}$, each $P\in\mathcal P^n$ corresponds to an \emph{$n-$cylinder} in $\Sigma$: let $\Sigma_n\subseteq \mathcal I^n$ denote all finite words of length $n$ and for any $\textbf{i}\subset\Sigma$, the $n$-cylinder defined by $\textbf{i}$ is 
$$
[\textbf{i}]=[i_0,\dots,i_{n-1}]:=\left\{y\in\Sigma:y_j=i_j,\,j=0,\dots,n-1\right\}.
$$
Then $\pi[i_0,i_1,\dots,i_{n-1}]=\bigcap_{j=0}^{n-1}T^{-j}P_{i_j}=:P_{\textbf{i}}$. The \emph{depth} of a cylinder $[\textbf{i}]$ is the length of $\textbf{i}$.

\subsection{Technical assumptions on $(\X,T,\mu)$}
In order to resolve the issue created by non-stationarity of our return process due to the independence of the initial position, logically we will require our system to exhibit fast decay of correlations so that the return process $Y_j(x)=-\log d(x,T^jx)$ resembles an \emph{i.i.d.}\ sequence after relatively few iterates. In order to make sure the records do not accumulate, we also need to require the set of points that returns to a small neighbourhood of themselves is not very large so that records do not accumulate in a small time interval near 0 after rescaling. Therefore, in this paper, we would focus on systems that are hyperbolic recurrent (see \S\ref{ssec: HT19 result}) as (A1) and (A2) are natural candidates of assumptions that meet our requirements. In particular, \hyperref[cond: A2]{(A2)} implies the following `averaged' version.

\begin{lemma}\label{lemma: A2 implies A2+}
     Let $(\X,T,\mu)$ be a dynamical system satisfying \hyperref[cond: A2]{(A2)} and $\rho\in\mathcal {BV}$. Then for every $r_n\lesssim n^{-1}$, 
$$
n\int\sum_{j=1}^{(\log n)^2}\mu\left(B(\zeta,r_n)\cap T^{-j}B(\zeta,r_n)\right)d\mu(\zeta)\lesssim \frac1{\log n}.
$$
\end{lemma}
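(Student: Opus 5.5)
The plan is to rewrite the double integral via Fubini so that it becomes the measure of a set of \emph{pairs}, and then exploit the fact that points $y$ close to $\zeta$ with $T^jy$ close to $\zeta$ must be almost periodic. Write
$$
\int\mu\left(B(\zeta,r)\cap T^{-j}B(\zeta,r)\right)d\mu(\zeta)=\iint \I_{\{|y-\zeta|<r\}}\I_{\{|T^jy-\zeta|<r\}}\,d\mu(y)\,d\mu(\zeta),
$$
with $r=r_n$. Swap the order of integration and integrate in $\zeta$ first. If both indicators equal $1$, then $|T^jy-y|<2r$, so $y\in E_j(2r)$. For fixed $y$, the $\zeta$-integral is then at most $\mu(B(y,r))\le 2\|\rho\|_\infty r$, using that $\rho\in\mathcal{BV}$ is bounded. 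This gives
$$
\int\mu\left(B(\zeta,r)\cap T^{-j}B(\zeta,r)\right)d\mu(\zeta)\le 2\|\rho\|_\infty\, r\,\mu(E_j(2r)).
$$

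Next, sum over $j\le(\log n)^2$ and multiply by $n$. Since $nr_n\lesssim1$, the left-hand side of the lemma is bounded by a constant times $\sum_{j=1}^{(\log n)^2}\mu(E_j(2r_n))$.

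There are two ways to bound this sum. The crude route applies (A2) to each term, giving $\mu(E_j(2r_n))\le C_0(2r_n)^{\beta_0}$. Assuming also $r_n\gtrsim n^{-a}$, so that (A2) applies with $r_n\asymp n^{-a}$ (or monotonicity in $r$ lets us enlarge $r_n$ to $n^{-1}$), the sum is
$$
\lesssim (\log n)^2\, n^{-\beta_0}\ll \frac1{\log n}.
$$
The better route avoids the $(\log n)^2$ factor entirely. Replace the sum by $(\log n)^2$ times the maximum, or, more cleanly, bound the summed integral directly by $2\|\rho\|_\infty r\,\mu(\widetilde E(2r))$. This gives $\lesssim n r_n\, r_n^{\beta_1}$, still polynomially small. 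Either way, the result is $\mathcal O(n^{-\beta})$ for some $\beta>0$, which is stronger than the claimed $1/\log n$.

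The main obstacle is justifying the use of (A2) at the radius $2r_n$, which need not be exactly of the form $\asymp n^{-a}$, since $r_n$ is only assumed $\lesssim n^{-1}$. I would handle this using monotonicity: $E_j(r)\subset E_j(r')$ for $r\le r'$. We may therefore replace $2r_n$ by $\max(2r_n,n^{-1})\asymp n^{-1}$, which is of the admissible form with $a=1$. The extra factor $r_n$ in front is kept as $r_n\lesssim n^{-1}$, so it still cancels $n$. One must also check that $\mu(B(y,r))\le 2\|\rho\|_\infty r$ holds, which uses only boundedness of $\rho$, and that Fubini applies because the integrand is a nonnegative measurable indicator.
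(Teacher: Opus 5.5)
Your main (``crude'') route is correct and is essentially the paper's proof: you use Fubini to reduce to $\int\mu(B(x,r_n)\cap B(T^jx,r_n))\,d\mu(x)$, restrict to $E_j(2r_n)$ with the bound $\mu(B(x,r_n))\lesssim r_n$, and apply (A2) termwise, summing to get $(\log n)^2n^{-\beta_0}$; your monotonicity device for $r_n\lesssim n^{-1}$, as opposed to $r_n\asymp n^{-1}$, is slightly more careful than the paper. The ``better route'' does not work as stated, because $\sum_{j\le(\log n)^2}\I_{E_j(2r)}$ is not dominated by $\I_{\widetilde E(2r)}$ (a point very close to a periodic orbit lies in many $E_j(2r)$ at once), so you cannot drop the $(\log n)^2$ factor that way; this is harmless, since the crude route already suffices.
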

\begin{proof}
For each $j\le (\log n)^2$,    
    \begin{align*}
      \int\mu\left(B(\zeta,r_n)\cap T^{-j}B(\zeta,r_n)\right)d\mu(\zeta)=& \iint \I_{B(\zeta, r_n)}(x) \cdot \I_{B(\zeta, r_n)}(T^j x) \, d\mu(x) \, d\mu(\zeta)\\
      =&\int\left[\int\I_{B(\zeta,r_n)}(x)\cdot \I_{B(\zeta,r_n)}(T^jx)d\mu(\zeta)\right]d\mu(x),
    \end{align*}
where the second line follows from Fubini. Notice that 
$$\I_{B(\zeta,r_n)}(x)\cdot \I_{B(\zeta,r_n)}(T^jx)=\I_{B(x,r_n)}(\zeta)\I_{B(T^jx,r_n)}(\zeta),
$$ 
so the inner integral equals $\mu\left(B(x,r_n)\cap B(T^jx,r_n)\right)$. The intersection is non-empty only when $x\in E_j(2r_n)$, and in this case we can trivially bound $\mu\left(B(x,r_n)\cap B(T^jx,r_n)\right)$ by $\mu(B(x,r_n))\lesssim r_n$ since $\|\rho\|_\infty<\infty$. Then, by \hyperref[cond: A2]{(A2)} we can bound the original integral by
\begin{equation*}
          \int\mu\left(B(\zeta,r_n)\cap T^{-j}B(\zeta,r_n)\right)d\mu(\zeta)\le \int_{E_j(2r_n)}\mu(B(x,r_n))d\mu(x)\lesssim r_n^{\beta_0+1}.
\end{equation*}
Summing over $j\le (\log n)^2$, for $r_n\asymp n^{-1}$, as $\beta_0>0$, for all $n$ large enough, 
\begin{equation*}
n(\log n)^2r_n^{\beta_0+1}\asymp (\log n)^2n^{-\beta_0}\lesssim \frac1{\log n}.\qedhere
\end{equation*}
\end{proof}

The next two assumptions will be helpful to prove the rate of convergence of point measures associated with the process $Y_j$. The first one is on the invariant density $\rho$ and the second is on the corresponding symbolic dynamic of $(\X,T)$.
\begin{condition}[LC]\label{cond: LC}
    The density $\rho$ is said to satisfy (LC) if there exists a closed set $\X_0\subseteq \X$ with $\mu(\X_0)=1$ and a uniform constant $C_\rho$ such that for  every point $x\in \X_0$, for all $|h|$ small enough,
    $$\left|\rho(x+h)-\rho(x)\right|\le \frac{C_\rho}{|\log|h||}.
    $$
\end{condition}
One will see as the proof proceeds, that the logarithmic part of the assumption will only be used once to overcome a technical difficulty regarding convergence rate. If the proof techniques are improved to bypass such difficulty, then the logarithmic continuity can be removed.

\begin{condition}[A1+]\label{cond: A1+}
    Say $\mu$ is \emph{exponentially $\psi$-mixing} if there are $\tilde C_0,\eta'>0$ and a monotone decreasing function $\psi(k)\le \tilde C_0e^{-\eta'k}$ for all $k\in\N$, such that the corresponding symbolic measure $\Tilde\mu$ satisfies: for all $n,k\in \N$, $\textbf{i}\in\Sigma_n$ and $\textbf{j}\in\Sigma^*=\bigcup_{l\ge1}\Sigma_l$,
    $$\left|\frac{\Tilde\mu([\textbf{i}]\cap\sigma^{-(n+k)}[\textbf{j}])}{\Tilde\mu[\textbf{i}]\Tilde\mu[\textbf{j}]}-1\right|\le \psi(k).$$
\end{condition}
 The $\psi$-mixing condition is rather strong, in the sense that it implies $\phi$-mixing, $\beta$-mixing and other weaker mixing conditions. However, it is satisfied by a wide range of interval maps with a Gibbs-Markov structure.

\subsection{Main result}
Similar to \S~\ref{ssec: HT19 result}, define the partial maximum $M_j:=\max\{Y_1,\dots,Y_j\}$ and the normalised partial maximal process 
$$
M_n(t):=M_{[tn]}-\log n=\max_{1\le j\le [nt]}-\log d(T^jx,x)-\log n.
$$ 
Recall from \S\ref{sec: introduction} the definition of $\tau_k$. It is obvious that $Y_{\tau_k}-\log n=M_n(\tau_k/n)$. Also define the point processes
\begin{equation}\label{eqn: def of PP for return}
    R_n:=\sum_{j=1}^\infty \delta_{j/n}\cdot \I_{\{Y_j>M_{j-1}\}} \hspace{5mm} 
    V_n:=\sum_{\tau_k}\delta_{M_n(\tau_k/n)}
\end{equation}

\begin{theorem}\label{main thm: convergence of PP}
    Assume that $(\X,T,\mu)$ is a piecewise uniformly expanding interval map that is hyperbolic recurrent, and satisfies \hyperref[cond: A1+]{(A1+)} and (LC).  Then 
    \begin{itemize}
        \item[(1)] $R_n$ converges weakly to a Poisson random measure $\mathcal R$ on $(0,\infty)$ with intensity $1/tdt$, and
        \item[(2)] $V_n$ also converges weakly to a Poisson random measure $\mathcal V$ on $(0,\infty)$ with intensity measure $\lambda_V[a,b]=(b-a)$.
    \end{itemize}
\end{theorem}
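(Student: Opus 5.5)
The plan is to follow the route outlined in \S\ref{ssec: iid Cox}. First prove convergence of the two-dimensional point process
$$N_n:=\sum_{j\ge1}\delta_{(j/n,\,Y_j-\log n)}$$
on $(0,\infty)\times(-\infty,\infty]$ to a Cox process $\mathcal N_{cox}$. Its random intensity is $\Lambda_x(dt\,dy)=2\rho(x)e^{-y}\,dt\,dy$, with $x$ drawn according to $\mu$. Then deduce (1) and (2) by the continuous mapping theorem applied to record-extraction maps.

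\textbf{Step 1: Laplace functionals.} By \cite[Theorem 4.2]{Kal83}, it suffices to show, for every continuous $g\ge0$ with compact support in $(0,\infty)\times(-\infty,\infty]$, that
$$\E\left[e^{-\int g\,dN_n}\right]\to\int\exp\left(-2\rho(x)\iint(1-e^{-g(t,y)})e^{-y}\,dy\,dt\right)d\mu(x).$$
Compact support means $y\ge a$ for some $a$, so only returns to $B(x,e^{-a}/n)$ matter.

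To handle the non-stationarity in the initial point, I will discretise. Take depth $p_n\asymp(\log n)^2$ and partition $\X$ into cylinders $P\in\mathcal P^{p_n}$. For $x\in P$ with $P$ small relative to $1/n$, approximate $Y_j(x)$ by $-\log d(T^jx,\zeta_P)$ for a reference point $\zeta_P$. This turns the problem, conditionally on $x\in P$, into a hitting problem to the fixed balls $B(\zeta_P,e^{-y}/n)$.

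Split the time axis into blocks of length $\asymp n/k$, separated by gaps of length $\asymp(\log n)^2$.
\begin{itemize}
\item[(i)] Use (A1+) ($\psi$-mixing, applied to cylinders of depth $p_n$ and their images) to factorise the conditional expectation over blocks, up to an error $\psi((\log n)^2)$ times the number of blocks, which is negligible.
\item[(ii)] Within each block, use Lemma~\ref{lemma: A2 implies A2+} to show that short returns (within $(\log n)^2$ steps) contribute $\mathcal O(1/\log n)$. Longer returns inside a block are handled by the $D'$-type estimate obtained from (A1) decay of correlations for BV indicators of balls.
\item[(iii)] The first $(\log n)^2$ iterates, where $x$ and $T^jx$ are strongly dependent, are controlled by (A2) directly: $\mu(\widetilde E(e^{-a}/n))\to0$.
\end{itemize}

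This reduces the problem to $\prod_i(1-\frac1nA_{i,n}(x))$ with
$$A_{i,n}(x)\approx n\int(1-e^{-g(i/n,y)})\,d\mu\left(\{-\log d(\cdot,x)-\log n\in dy\}\right).$$
By (LC), $n\mu(B(x,e^{-y}/n))=2\rho(x)e^{-y}+\mathcal O(1/\log n)$ uniformly on $\X_0$. The logarithmic modulus is exactly what beats the $(\log n)^2$ multiplicities and the approximation $x\mapsto\zeta_P$. Lemma~\ref{lemma: Cesaro average converges to a implies product converges to exp(-a)} and dominated convergence then give the limit.

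\textbf{Step 2: Record maps.} Define $\Phi_R(\sum\delta_{(t_i,y_i)})=\sum\delta_{t_i}\I\{y_i>\max_{t_l<t_i}y_l\}$ and $\Phi_V$ similarly, recording the values. As in \cite[\S4.4]{Res87}, these maps are a.s.\ continuous at realisations of $\mathcal N_{cox}$. Continuity holds because, conditionally on $x$, the limit is a Poisson process with intensity $2\rho(x)e^{-y}dt\,dy$, which is non-atomic, has infinitely many points near $t=0$, and has finitely many points above every level. So the continuity set has full measure after averaging over $x$.

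There is one subtlety: records of $N_n$ at times $j/n\to0$ are not captured by vague convergence. I will show they do not cause accumulation using (A2) and Lemma~\ref{lemma: A2 implies A2+} again, since a record at small time $\epsilon$ above level $y$ has probability at most $\lesssim\epsilon e^{-y}$ plus short-return terms.

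The continuous mapping theorem then gives $R_n\Rightarrow\Phi_R(\mathcal N_{cox})$ and $V_n\Rightarrow\Phi_V(\mathcal N_{cox})$. Conditionally on $x$, these are the PRMs from \eqref{eqn: weak convergence Resnick}. The record-time process has intensity $dt/t$ regardless of $\rho(x)$, by scale invariance. The record-value process is Poisson with mean measure $-\log F(y)$ where $F(y)=e^{-2\rho(x)e^{-y}}$; after the shift by $\log(2\rho(x))$ implicit in the normalisation, this becomes Lebesgue measure. Since the conditional law does not depend on $x$, the mixture over $x$ is itself the stated Poisson random measure.

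\textbf{Main obstacle.} The hardest part is Step 1, specifically the uniform (in $x$) replacement of the moving centre $x$ by a fixed $\zeta_P$ combined with the block decomposition. The error terms from mixing, short returns and density variation must be summed over $\asymp(\log n)^2$ cylinder depths and $n$ time steps while remaining $o(1)$. I will first try cylinders of depth $p_n=C\log n$, using uniform expansion so that $|P|\le\upsilon^{-p_n}\ll n^{-1}$, and check whether the resulting errors fit within the $1/\log n$ budget provided by (LC).
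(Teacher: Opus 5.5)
Your proposal is correct in outline and has the same architecture as the paper. You prove Laplace-functional convergence of $N_n$ to the Cox process, apply the continuous mapping theorem to the record-extraction maps, and identify the limit by conditioning on $x$, using that the record intensities $dt/t$ and $b-a$ do not depend on $\rho(x)$. The core Poisson approximation, however, follows a genuinely different route.

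\textbf{The paper's route.} The paper discretises the initial point with the interval partition $\U_m$ of mesh $n^{-(1+\beta')}$. It passes to the auxiliary process $\widetilde N_n$, replacing $x$ by $\zeta_\Delta$ and $\I_\Delta(x)$ by $\mu(\Delta)$ via (A1) and (A2). Then, for each fixed centre, it proves a quantitative multivariate Poisson approximation in total variation:
\begin{itemize}
\item $g$ is replaced by a step function on a $\log n\times\log n$ grid;
\item time is cut into microscopic blocks of length $(\log n)^3$, each carrying at most one hit;
\item annuli are approximated by stopping cylinders so that $\psi$-mixing makes the blocks independent;
\item the multivariate Le Cam theorem finishes, and (LC) converts this into an $\mathcal O(1/\log n)$ error.
\end{itemize}

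\textbf{Your route.} You partition by the cylinders of $\mathcal P^{p_n}$. For these, (A1+) gives the decoupling of the initial cell from the orbit directly and multiplicatively: $\mu(P\cap T^{-s}G)=\mu(P)\mu(G)(1\pm\psi(s-p_n))$ for every Borel $G$. This is more direct than the paper's auxiliary process. You then run a Leadbetter-type argument with macroscopic blocks, a $D'$ bound, and the Ces\`aro product lemma from the i.i.d.\ section. This is more elementary and closer to classical extreme value theory. It only yields an $\mathcal O(1/k)+o(1)$ error, so $k\to\infty$ must be taken after $n$, or with $k=k_n\to\infty$ slowly. That suffices for the theorem. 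In that case (LC) is not needed at all, since Lebesgue differentiation and dominated convergence identify the intensity. Your remark that the logarithmic modulus ``beats the $(\log n)^2$ multiplicities'' is therefore misplaced.

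\textbf{State space.} Your choice of $(0,\infty)\times(-\infty,\infty]$, together with the small-time bound $\sum_{j\le n\epsilon}\mu(E_j(e^{-y}/n))\lesssim\mu(\widetilde E(e^{-y}/n))+\epsilon e^{-y}$, is what a.s.\ continuity of the record maps genuinely requires. No mass may escape to $y=+\infty$ or to $t=0$, and $V_n$ needs all times, not just $(0,1]$. The paper's formulation on $(0,1]\times\R$ leaves this to the cited references.

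\textbf{Points you still need to supply.}
\begin{itemize}
\item \emph{Cylinder approximation of targets.} $\psi$-mixing factorises only events determined by finite windows of symbols, whereas $\{T^jx\in B(\zeta,r)\}$ depends on the whole tail of the coding. For the inter-block factorisation you must approximate the balls (or the level sets of $g$) by unions of cylinders of depth $\mathcal O(\log n)$ and diameter much smaller than $n^{-2}$. This is as in Step 3(a) of the proof of Lemma~\ref{lemma: TV approximation}, and the cost is negligible.
\item \emph{Averaging of short returns.} Lemma~\ref{lemma: A2 implies A2+} controls short returns only on average over the centre. So your reduction to $\prod_i(1-\frac1nA_{i,n}(x))$ holds in $L^1(\mu)$, not for each fixed $x$ followed by dominated convergence. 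The short-return errors at the reference points must be summed with weights $\mu(P)$. For example, use $\mu(B(\zeta_P,r)\cap T^{-i}B(\zeta_P,r))\le\mu(B(x,r')\cap T^{-i}B(x,r'))$ for $x\in P$, with $r'=r+\operatorname{diam}P$.
\item \emph{Record-value mean measure.} For a Poisson process with intensity $2\rho(x)e^{-y}\,dt\,dy$, the record values have mean measure given by increments of $-\log(-\log F(y))=y-\log(2\rho(x))$, not of $-\log F$. The term $\log(2\rho(x))$ cancels in $b-a$, and no shift enters the normalisation.
\end{itemize}
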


The above theorem is a consequence of the following:
\begin{theorem}\label{thm: convergence to Cox}
    Under the assumption of Theorem~\ref{main thm: convergence of PP}, the point process $$N_n:=\sum_{j=1}^n\delta_{\left(j/n,Y_j-\log n\right)}$$ 
    converges weakly to a \hyperref[defn: Cox proc]{Cox process} $\mathcal N$ on $(0,1]\times\R$ with random intensity measure given by $\Lambda_x(dydt)=2\rho(x)e^{-y}dydt$, while $\Lambda_x$ is drawn over $x\in \X$ with density $\rho(x)$. 
\end{theorem}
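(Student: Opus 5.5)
We prove Theorem~\ref{thm: convergence to Cox} by showing that the Laplace functionals converge, as in the proof of Proposition~\ref{prop: iid cox}; by \cite[Theorem 4.2]{Kal83} this suffices. Fix $g\ge0$ continuous with compact support in $(0,1]\times\R$, with $z$-support in $[a,b)$. The target is
$$
\E\left[e^{-\int g\,dN_n}\right]=\int e^{-\sum_{j=1}^n g(j/n,Y_j(x)-\log n)}\,d\mu(x)\longrightarrow\int\exp\Big(-2\rho(x)\iint(1-e^{-g(t,z)})e^{-z}dzdt\Big)\rho(x)dx .
$$
Since the process is not stationary, we cannot condition on $X_0=x$ and use independence directly. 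Instead we localise the initial point. Cover $\X$ by cylinders $P_{\textbf i}\in\mathcal P^{m}$ of depth $m=m_n\asymp\log n$. By uniform expansion these have diameter $\lesssim\upsilon^{-m}$; we choose $m$ so that this diameter is $\ll e^{-b}/n$, at least for the typical cylinders. On such a cylinder the event $\{d(T^jx,x)<e^{-z}/n\}$ is then well approximated by $\{T^jx\in B(\zeta,e^{-z}/n)\}$ for a fixed reference point $\zeta\in P_{\textbf i}$. Up to a sandwich between balls of radii $e^{-z}/n(1\pm\eps_n)$, this turns the problem into a hitting problem to a fixed target, restricted to the set $P_{\textbf i}$. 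The choice of $m$ is delicate, since $\psi$-mixing only gives $\psi$ exponentially small and depth $m\sim C\log n$ is needed. The price of the sandwich is controlled by the Lipschitz/(LC) regularity of $\rho$, so that $\mu$ of the annuli is $o(1/n)$ after summation.

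The key steps, in order, are as follows.

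\textbf{Step 1: discard short returns.} Remove the times $j\le(\log n)^2$ and more generally $j\le m+(\log n)^2$. For the short returns, Lemma~\ref{lemma: A2 implies A2+} together with (A2) shows that the $\mu$-measure of points with $Y_j>\log n+a$ for some $j\le(\log n)^2$ is $\lesssim 1/\log n$. The finitely many times with $j\le m$ have $j/n\to0$ and therefore lie outside $\supp g$ in the $t$-variable.

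\textbf{Step 2: approximate by independent blocks.} Split $[(\log n)^2,n]$ into blocks of length $n/K$ separated by gaps of length $(\log n)^2$. Using (A1+) on the symbolic cylinders (balls are unions of cylinders up to a small error), we get
$$
\mu\Big(P_{\textbf i}\cap\bigcap_{\text{blocks}}\{\cdots\}\Big)=\mu(P_{\textbf i})\prod_{\text{blocks}}\mu(\cdots)\,(1\pm\psi((\log n)^2))^{K}.
$$
Within each block, the standard Leadbetter argument converts the block Laplace factor into $1-\sum_j\mu(\ldots)(1-e^{-g})$. Bounding the pairs inside a block requires an anti-clustering estimate $n\sum_{(\log n)^2<j<n/K}\mu(B\cap T^{-j}B)\lesssim n\cdot(n/K)\mu(B)^2\lesssim 1/K$, which follows from (A1) with $\I_B\in BV$.

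\textbf{Step 3: assemble the limit.} Each block factor yields $\frac1n\sum_j\int(1-e^{-g(j/n,z)})\,2\rho(\zeta)e^{-z}dz$ up to errors. Lemma~\ref{lemma: Cesaro average converges to a implies product converges to exp(-a)} then gives $\exp(-2\rho(\zeta)\iint(1-e^{-g})e^{-z}dzdt)$ for each cylinder. Summing over cylinders with weights $\mu(P_{\textbf i})$ gives a Riemann-type sum that converges to the claimed integral, by continuity of $\rho$ on $\X_0$ (from (LC)) and dominated convergence. We then let $K\to\infty$.

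The main obstacle is the error accounting when summing over $\#\mathcal P^m$ cylinders. The mixing error $\psi$, the boundary-annulus error (where (LC) is needed to get $|\rho(\zeta+h)-\rho(\zeta)|\lesssim 1/\log n$ uniformly), and the short-return set must all be summed with weights $\mu(P_{\textbf i})$, not counted per cylinder. The first attempt will therefore take $m=\lfloor C\log n\rfloor$ with $C$ large and $(\log n)^2\gg m$, so that $\psi((\log n)^2)\cdot K\to0$ and all the errors are uniform in $\textbf i$. Finally, Theorem~\ref{main thm: convergence of PP} will follow by applying the record-extraction maps and the Continuous Mapping Theorem; it is not part of this proof.
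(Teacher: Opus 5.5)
Your route differs from the paper's and can be made to work, but as written it has one real gap. The paper localises the initial point with intervals of length $n^{-(1+\beta')}$ and decorrelates it using (A1). For each centre it then approximates the discretised counts $M^\Delta_{n,k,l}$ by independent Poisson variables, using $(\log n)^3$-blocks, stopping-cylinder approximation of the annuli, $\psi$-mixing and the multivariate Le Cam bound; this yields an explicit $1/\log n$ error, and that rate is where (LC) is used. You instead localise with depth-$m$ cylinders and use (A1+) directly inside the Laplace functional, $\E[\I_{P_{\textbf i}}e^{-\int g\,dN_n}]\approx\mu(P_{\textbf i})\E[e^{-\int g\,dN_n^{\zeta_{\textbf i}}}]$. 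The errors are then multiplicative, which is harmless for countable alphabets such as the Gauss map, and you avoid the auxiliary mixture $\widetilde N_n$ and the linearisation $|e^{-u}-e^{-v}|\le|u-v|$. You then run a classical Leadbetter big-block argument with $K$ fixed, letting $K\to\infty$ at the end. This gives only qualitative convergence, which is all the theorem asserts.

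The gap is anti-clustering inside a block at short lags. In Step 2 you only bound pairs $j<j'$ in a block with $j'-j>(\log n)^2$. After $\psi$-mixing removes $\I_{P_{\textbf i}}$, the pairs with $1\le j'-j\le(\log n)^2$ contribute, summed over all blocks, about $n\,c_n(\zeta_{\textbf i})$, where $c_n(\zeta):=\sum_{k\le(\log n)^2}\mu(B(\zeta,r)\cap T^{-k}B(\zeta,r))$ and $r=e^{-a}/n$. This term is \emph{not} small uniformly in $\textbf i$. Take the doubling map and the cylinder containing the fixed point $0$, whose reference point is within $2^{-m}\ll 1/n$ of $0$. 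There $\mu(B\cap T^{-1}B)\gtrsim\mu(B)$, so $n\,c_n(\zeta_{\textbf i})\gtrsim e^{-a}$. Hits to that target genuinely come in clusters, so the cylinder-level Laplace functional is not close to $\Psi^g_\infty(\zeta_{\textbf i})$. Your closing claim that, with $m=\lfloor C\log n\rfloor$, all errors are uniform in $\textbf i$ is therefore false, and no choice of $m$ or $K$ repairs it.

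To fix this, carry the cylinder-level error in the form $\mu(P_{\textbf i})\bigl(o(1)+C/K+C\,n\,c_n(\zeta_{\textbf i})\bigr)$ and control the last term only on average. The map $\zeta\mapsto c_n(\zeta)$ is Lipschitz with constant $\lesssim(\log n)^2\|\rho\|_\infty$, and $\operatorname{diam}P_{\textbf i}\lesssim\upsilon^{-m}\le n^{-1-\beta}$. Hence $\sum_{\textbf i}\mu(P_{\textbf i})\,n\,c_n(\zeta_{\textbf i})=n\int c_n\,d\mu+o(1)\lesssim 1/\log n$ by Lemma~\ref{lemma: A2 implies A2+}.

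This averaged short-lag estimate is the point where (A2) genuinely enters; it is the ``short returns'' part of Step 2 in the proof of Lemma~\ref{lemma: TV approximation}. Your Step 1 spends (A2) and Lemma~\ref{lemma: A2 implies A2+} on returns at times $j\le(\log n)^2$, but those are irrelevant, since $g$ vanishes for $t$ near $0$. With the averaged bound in place, your scheme goes through. You also need to approximate the block functionals by finite-depth cylinder-measurable ones so that (A1+) applies to them.
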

As far as the author knows, although the rate of convergence (with error $\lesssim 1/\log n$) is not directly derived for the distribution, our result is still one of the few quantitative statement in the literature of dynamical extreme value theory, and is first in establishing a relation between return process and the Cox process. To upgrade to a genuine rate of convergence of the laws may require other techniques such as Stein's method using characterising equations. We also make the following remarks.
\begin{remark}\label{rmk: discuss after main reults}
(1) For intuition, conditional on the initial point $x$, the limiting point
process is Poisson with intensity $2\rho(x)e^{-z}\,dt\,dz$ which coincides with the hitting result in \cite{HolTod19}, then averaging over the initial point therefore produces the Cox process which agrees with the main theorem \cite{HolTod25}.

(2) One may be tempted to conclude, as a consequence of  Theorem~\ref{main thm: convergence of PP}, that for fixed $k$, the $k$-th record also follows a $\Gamma(k,1)$ distribution. Indeed, by Lemma~\ref{lemma: Poisson and Gamma}, the probability of having more than $k$ records in the interval $[0,r]$ is
$$
\P(V_n[0,r]\ge k)\to \P(\Gamma(k,1)\le r).
$$
This is consistent with \S~\ref{ssec: iid evl}. This appears to be unreasonable: for any fixed $k$, the ordinary $k$-th record is almost surely finite, hence the normalisation $Y_{\tau_k}-\log n$ goes to $-\infty$ as $n\to\infty$ almost surely. The correct way to interpret the main theorem is: the $k$-th record above the $\log n$ threshold has a $\Gamma(k,1)$ distribution. In other words, if we set 
$$J_n:=\#\left\{j\le n:Y_j>M_{j-1}\text{ and }Y_j<\log n\right\},
$$ then for any $k\in\N$, $\P\left(Y_{\tau_{J_n+k}}-\log n\le r\right)\to \P(\Gamma(k,1)\le r)$. This interpretation is consistent with motivating results in \emph{i.i.d.}\ setting, and reflects the convergence in vague topology: such topology only requires convergence for test functions with compact support, hence the values diverge to $-\infty$ after recentering are simply not seen by the test function.

(3)  By \cite[Proposition 4.12, Proposition 4.13]{Res87}, for the \emph{i.i.d.}\ case as in \S~\ref{ssec: iid evl}, as a consequence of \cite[Proposition 4.11]{Res87}, the Continuous Mapping Theorem \cite{Kal83} and embedding into a extremal process, for almost every $x$,
\begin{equation}\label{eqn: iid record time convergence}
\frac{\log \tau_k}{k}\to1\text{ and } \frac{\log(\tau_k-\tau_{k-1})}{k}\to 1.
\end{equation}
Due to the strongly dependent nature of our dynamical process, the analogous almost sure statement has yet been derived because explicit error bounds of convergence rate are hard to obtain. But it is very possible that these formulas hold. Especially, if there is a way to upgrade the rate of convergence (currently at $1/\log n$) of Laplace functionals to a stronger estimate (so that the error terms are summable in $n$), then one may be able to apply Borel-Cantelli arguments.
\end{remark}

\section{Examples}
We give examples of interval maps that satisfy the assumptions for our main results. In particular, these maps have the Gibbs-Markov property hence the corresponding symbolic dynamics is a subshift of finite type or full-shift on countable alphabet, and the corresponding invariant measure $\tilde \mu$ is the unique Gibbs equilibrium for the geometric potential $-\log|T'\circ\pi|$. $\tilde \mu$ is $\psi$-mixing \cite{Bow75}. Condition $(A2)$ can be verified by \cite[Lemma 3.4]{HolNicTor12}\footnote{The statement of \cite[Lemma 3.4]{HolNicTor12} is a bound for the Lebesgue measure of $E_j(r)$ sets, but since we will take $\mu$ which is an acip, the upper bound holds with the same exponent for $\mu(E_j(r)$.} 
\begin{example}[Doubling map]
    Let $T(x)=2 x \, (\text{mod }1)$ be the doubling map, and $\mu=\mathrm{Leb}$. Then this map satisfies all assumptions. 
\end{example}

\begin{example}[Finitely branched Gibbs-Markov maps]
    Let $T:[0,1]\to[0,1]$ be an interval map with finitely many branches with the Gibbs-Markov property (see the precise definition in \cite{HolNicTor12}), and suppose $T$ on each branch is $C^{1+\alpha}$. Let $\mu$ be the unique ergodic $T$-invariant measure absolutely continuous with respect to the Lebesgue measure, then $\mu$ also has the Gibbs property. It is well known that maps of this kind are hyperbolic recurrent \cite{HolTod25}, and satisfy \hyperref[cond: A1+]{(A1+)} \cite{Bow75}. Within each cylinder set, the invariant density is Lipschitz, hence as the boundary points of cylinder sets are countable, \hyperref[cond: LC]{(LC)} also holds.
\end{example}

\begin{example}[Gauss map]
    Let $G(x)=\frac1x-\lfloor \frac1x\rfloor$ be the Gauss map and $\mu_G$ the Gauss measure with density $\frac{d\mu_G}{d\mathrm{Leb}}(x)=\frac1{(\log 2)(1+x)}dx$. This system is hyperbolic recurrent, and its symbolic representation is a full-shift on natural numbers, the equilibrium with respect to the geometric potential $-\log |G'\circ \pi|$ is a Gibbs measure hence satisfies \hyperref[cond: A1+]{(A1+)}. Condition \hyperref[cond: LC]{(LC)} can be easily verified for every $x\in[0,1]$.
\end{example}

\section{Proof of Theorem~\ref{thm: convergence to Cox}}
The main tool to verify the convergence in question is by checking the convergence in the Laplace functional. In accurate terms, for $N_n=\sum_{j=1}^n \delta_{(j/n,Y_j-\log n)}$, we will check the following convergence
\begin{equation}\label{eqn: Laplace func of Cox}
    \begin{aligned}
        \lim_{n\to\infty}\E_\mu\left[e^{-\int g\,dN_n}\right]&= \E_\mu\left[\exp\!\left(-\iint (1-e^{-g(t,z)})\,2\rho(x)e^{-z}\,dzdt\right)\right]\\
        &=\int\exp\left(-\int (1-e^{-g(t,z)})\,2\rho(x)e^{-z}\,dzdt\right) \,\rho(x) dx
    \end{aligned}
\end{equation}
for every non‑negative continuous function \(g\) with compact support. Then, by \cite[Proposition 3.5]{Res87}, as the Laplace transform uniquely determines a measure, the desired convergence result holds. The major difficulty comes from the non-stationary nature of the random point measures $\{N_n\}_n$, which are defined over return distances, so the precise calculation of 
$\E_\mu[e^{-\int g N_n}]$ can be very complicated. Standard extreme value theory setup including Leadbetter's $\mathcal D$ and $\mathcal D'$ conditions (even the non-stationary version derived by \cite{FreFreVai17} as a dynamical adaptation of Husler's condition) are not applicable for this point measure.
Instead, we introduce an auxiliary point measure whose Laplace transform is easier to compute, and is useful in approximating that of $N_n$

\subsection{Spatial-Temporal partition}
The partition and related methods used in \cite{HolTod25} will be  handy for decorrelating the initial conditions. Let $\U_m=\{\Delta_k\}_k$ be the partition of $ \mathcal X$ using intervals $\Delta_k$(with disjoint interiors) of length $1/m$ and with centre $\zeta_k$. It is obvious that $\#\U_m= m$, and we choose 
$$m = n^{1+\beta'}  \text{ and set }\delta=|\Delta|/2.
$$ 

Let $r>0$. For each $\Delta\in\U_m$, set 
$$\Delta_0:=\{y\in\X:d(y,\Delta)\le r\},\hspace{4mm}\Delta^{\pm}:=\{y\in\X:d(y,\Delta)\le r\pm\delta\}.$$

\subsection{Approximation of Laplace functional by auxiliary point process}
In order to prove the convergence of the Laplace transform, we first need to approximate the behaviour of $N_n$ via some auxiliary point measures. Let $\U_m$ be the partition described above, for each $\Delta\in\U_m$, set 
$$ Y^\Delta_j(x):=-\log d(T^jx,\zeta),\text{ where $\zeta$ is the centre of $\Delta$.} 
$$

Let 
\begin{equation*}\label{eqn: def of N_n}
N_n^{\Delta}:=\sum_{j=1}^n\delta_{\left(j/n,Y_j^{\Delta}-\log n\right)}.
\end{equation*}
In comparison to the process $N_n$, which is of our main interest and related to the returns, $ N_n^{\Delta}$ corresponds to the hitting process to the $\Delta$-component. We then define our auxiliary point process: let $\widetilde N_n$ be a random point measure such that we
\begin{itemize}[noitemsep]
    \item draw $\Delta \in \U_m$ with probability $\mu(\Delta)$; 
    \item then draw $x\in\X$ according to $\mu$.
\end{itemize}
Note that $\widetilde N_n$ is still simple. 
Occasionally, abusing notation, we may write $\widetilde N_n=\sum_{\Delta\in\U_m}\mu(\Delta)N_n^\Delta$ (abusing in the sense that $\sum_{\Delta\in\U_m}\mu(\Delta)N_n^\Delta$ in its literal meaning is not a point measure, since its values may be non-integers).

Our first goal is to show that the Laplace transforms of these two point measures are asymptotically equal with an error term of order $1/\log n$. Namely, for each non-negative, continuous function $g:\R\to\R_{\ge0}$ with compact support, 
$$\mathcal L_n(g):=\E_\mu\left[e^{-\int g\,dN_n}\right]=\int e^{-\int g\,dN_n}d\mu,$$
$$\widetilde{\mathcal L_n}(g):=\E_\mu\left[e^{- \int g\,d\widetilde N_n}\right]=\int e^{- \int g\,d\widetilde N_n}d\mu=\sum_{\Delta\in \U_m}\mu(\Delta)\int e^{-\int g dN_n^\Delta} d\mu.$$
We aim to show $\lim_n |\mathcal L_n(g)-\widetilde{\mathcal L}_n(g)| \to0$. By the Mean Value Theorem, $|e^{-u}-e^{-v}|\le |u-v|$ for all $u,v\ge 0$, since $g$ is non-negative,  
$$
\E_\mu\left[\int gd\widetilde N_n-\int gdN_n\right]\le\mathcal L_n(g)-\widetilde{\mathcal L}_n(g)\le \E_\mu\left[\int gdN_n-\int gd\widetilde N_n\right].$$
Thus it suffices to prove the following proposition.

\begin{proposition}\label{prop: convergence of point process integral}
    Let $g:\R\to\R_{\ge0}$ be a non-negative continuous function with compact support and $N_n,\, \widetilde N_n$ defined as above. Then if $(\X,T,\mu)$ is hyperbolic recurrent,
\begin{equation}\label{eqn: auxiliary pp expectation converges to original pp}
   \E_\mu \left[ \left|\int g\, dN-\int g\, d\widetilde N_n \right|\right]\lesssim\frac1{\log n}.
\end{equation}
\end{proposition}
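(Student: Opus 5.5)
The plan is to realise both point processes on a common probability space and compare them term by term in $j$. Concretely, pair $x$ drawn from $\mu$ with $\Delta$ taken to be the partition element $\Delta(x)\in\U_m$ containing $x$. This coupling has the correct law for $\Delta$, namely $\mu(\Delta)$. The remaining mismatch, that $\widetilde N_n$ uses an independent $x$ rather than $x\in\Delta$, must then be handled separately by decorrelation. With this in mind, I would bound
\[
\E_\mu\Big|\int g\,dN_n-\int g\,d\widetilde N_n\Big|\le \mathrm{(I)}+\mathrm{(II)},
\]
where (I) is the error from replacing $x$ by the centre $\zeta$ of $\Delta(x)$, and (II) is the error from replacing the conditional law $\mu(\cdot\mid\Delta)$ of the initial point by $\mu$.

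For (I), write $\int g\,dN_n-\int g\,dN^{\Delta(x)}_n=\sum_{j\le n}\big[g(j/n,Y_j-\log n)-g(j/n,Y^\Delta_j-\log n)\big]$. Since $|x-\zeta|\le\delta=1/(2m)=n^{-1-\beta'}/2$, we have $|d(T^jx,x)-d(T^jx,\zeta)|\le\delta$. Take $\operatorname{supp} g\subset[0,1]\times[a,b]$. A term is nonzero only if $T^jx\in B(\zeta,e^{-a}/n+\delta)$, i.e.\ $T^jx\in\Delta^+$ with $r=e^{-a}/n$. In that case the two arguments of $g$ differ by $O(\delta n e^{b})=O(n^{-\beta'})$, unless $T^jx$ lies in a thin annulus where one of the values leaves the support. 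I would split the terms as follows.
\begin{itemize}
\item Uniform continuity of $g$ bounds the typical term by $\omega_g(n^{-\beta'})$. Multiplying by the expected number of $j\le n$ with $T^jx\in\Delta^+$ gives $O(1)$ by invariance and $\|\rho\|_\infty<\infty$, so this part is $o(1/\log n)$.
\item The annulus contribution is bounded by $n\mu(\text{annulus})\lesssim n\delta\to0$.
\item The genuine difficulty is that $x$ itself lies in $\Delta$, so for short times the events are not independent of the starting position. For $j\le(\log n)^2$ I would bound everything crudely by $\|g\|_\infty\,\mu\big(d(T^jx,x)<2e^{-a}/n\big)$ and sum. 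Using (A2), or more precisely Lemma~\ref{lemma: A2 implies A2+} (after covering $B(x,r)$ by $B(\zeta,2r)$), this sum is $\lesssim 1/\log n$. This step determines the rate.
\end{itemize}

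For (II), compare $\sum_{\Delta}\mu(\Delta)\,\E\big[\int g\,dN^\Delta_n\mid x\in\Delta\big]$ with $\sum_\Delta\mu(\Delta)\,\E_\mu\big[\int g\,dN^\Delta_n\big]$. Since we need an absolute-value bound rather than a bound on differences of expectations, I would first pass through the Laplace functionals, which is all that is used afterwards; alternatively one can bound $\E|\cdot|$ by the sum of the expected counts in the short range plus a decorrelated remainder. For $j>(\log n)^2$, apply (A1) with $f=\I_\Delta/\mu(\Delta)$ and $g=\I_{\Delta^+}$-type observables, using that $\|\I_\Delta\|_{BV}\le 3$. This gives $|\mu(\Delta\cap T^{-j}A)-\mu(\Delta)\mu(A)|\lesssim e^{-\eta j}\mu(A)$ for intervals $A$. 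Summed over $j\ge(\log n)^2$ and over $\Delta$, with $\mu(A)\lesssim 1/n$, the error is $O(e^{-\eta(\log n)^2})$. The short range $j\le(\log n)^2$ is again handled by the crude bound $n^{-1}(\log n)^2\cdot$const for the independent copy, together with the recurrence bound from Lemma~\ref{lemma: A2 implies A2+} for the coupled copy.

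The main obstacle is the short-return window $j\le(\log n)^2$. There the coupling of $x$ with $\Delta(x)$ correlates the initial position with the target ball, and only (A2), through Lemma~\ref{lemma: A2 implies A2+}, controls it. This is exactly what yields the $1/\log n$ rate rather than anything better, and I expect the careful enlargement of radii from $r_n$ to $r_n+\delta$ to need the choice $m=n^{1+\beta'}$.
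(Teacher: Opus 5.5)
Your decomposition is exactly the paper's. (I) is its $\mathrm{Err}_{n,1}=\sum_{\Delta}\I_\Delta(x)\sum_{j\le n}\big[g(j/n,Y_j-\log n)-g(j/n,Y^\Delta_j-\log n)\big]$. The difference you compare in (II) is $\E_\mu[\mathrm{Err}_{n,2}]$ with $\mathrm{Err}_{n,2}=\sum_\Delta(\I_\Delta(x)-\mu(\Delta))\int g\,dN^\Delta_n(x)$, which the paper treats by the same short/long split and single-time use of (A1) with $\|\I_\Delta\|_{BV}\le 3$.

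The gap is the one you flag but do not close. Estimates of the form $|\mu(\Delta\cap T^{-j}A)-\mu(\Delta)\mu(A)|\lesssim e^{-\eta j}\mu(A)$ only control a difference of first moments, never $\E_\mu|\cdot|$. The paper's proof has the same defect: it bounds $\E_\mu[\mathrm{Err}_{n,2}]$ from above and below, nothing more. No sharpening of mean estimates can close it. Under the pointwise reading $\int g\,d\widetilde N_n(x)=\sum_\Delta\mu(\Delta)\int g\,dN^\Delta_n(x)$ used in the paper's proof, the inequality is false for $g\not\equiv 0$:
\begin{itemize}
\item up to an error that is $o(1)$ in $L^1(\mu)$, this average equals $\frac1n\sum_{j\le n}2\rho(T^jx)\int g(j/n,z)e^{-z}\,dz$, which converges in $L^1(\mu)$ to a constant $c>0$ by the ergodic theorem;
\item but $\int g\,dN_n=0$ on $\{\max_{j\le n}Y_j<a+\log n\}$, and this event's probability stays bounded away from $0$ by Theorem~\ref{thm: HT25}.
\end{itemize}
Hence $\E_\mu|\int g\,dN_n-\int g\,d\widetilde N_n|\not\to 0$.

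What is missing is an actual coupling of your conditioned copy with the independent one. (A1) provides it because it is BV against $L^1$: applied to $\I_\Delta$ and an arbitrary Borel indicator, it gives $\sum_\Delta\mu(\Delta)\,d_{\mathrm{TV}}\big(T^k_*\mu(\cdot\mid\Delta),\mu\big)\le 3C_T\,m\,e^{-\eta k}$. Take $k=(\log n)^2$ and a maximal coupling of these laws, lifted to initial points by disintegration. This gives $x'\sim\mu$ independent of $\Delta=\Delta(x)$, with $T^jx'=T^jx$ for all $j\ge k$ off an event of probability $\lesssim n^{1+\beta'}e^{-\eta(\log n)^2}$. Then $\widetilde N_n:=N_n^{\Delta}(x')$ has the prescribed law, and it differs from $N_n^{\Delta(x)}(x)$ only in two ways:
\begin{itemize}
\item through the first $k$ times, contributing $\lesssim(\log n)^2n^{-\beta_0}$ for $x$ by (A2) and $\lesssim(\log n)^2/n$ for $x'$;
\item through the exceptional event, costing at most $2n\|g\|_\infty$ times its probability.
\end{itemize}
If you settle for Laplace functionals, which is what is used later, you must decorrelate $\I_\Delta$ against the whole tail functional $\Phi_\Delta\circ T^k$, not against interval indicators. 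Here $\Phi_\Delta=\exp\big(-\sum_{k<j\le n}\phi^\Delta_{n,j}\circ T^{j-k}\big)$ with $\phi^\Delta_{n,j}(y)=g(j/n,-\log d(y,\zeta_\Delta)-\log n)$. (A1) allows this since $\|\Phi_\Delta\|_{L^1(\mu)}\le 1$.

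Three smaller repairs are needed in (I).
\begin{itemize}
\item The expected number of $j\le n$ with $d(T^jx,\zeta_{\Delta(x)})\le e^{-a}/n+\delta$ is not $O(1)$ ``by invariance'', because the centre depends on $x$. You need $\sum_{j\le n}\mu\big(E_j(e^{-a}/n+2\delta)\big)\lesssim 1$, i.e.\ (A2) for $j\le(\log n)^2$ and the (A1) bound of Lemma~\ref{lemma: estimates of return for longer time} beyond that. The same applies to your annulus term.
\item For short times use (A2) directly. Lemma~\ref{lemma: A2 implies A2+} controls double visits to a fixed ball averaged over centres, and does not bound $\mu(E_j(r))$ without a lower bound on $\rho$.
\item Uniform continuity only gives $\omega_g(n^{-\beta'})=o(1)$. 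The $1/\log n$ rate requires restricting to $C^1$ $g$, as the paper does.
\end{itemize}
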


\begin{remark}\label{rmk: chance from continuous to C1}
When verifying the convergence of the Laplace functional for $N_n^\Delta$ for any $\Delta\in \U_m$, it is enough to restrict to $C^1$ non-negative test functions. This is because $C^1$ non-negative test functions are dense: for any $\eps>0$ and any $g$ a non-negative continuous functions with compact support $K=[a,b]\subset[-D,D]\subset (0,1]\times \R$, there exists $g_\eps\ge0$ that is $C^1$ and supported on $K$, such that $\|g-g_\eps\|_\infty<\eps$, hence
$$\left|\int gdN_n^\Delta-\int g_\eps dN_n^\Delta\right|\le \eps N_n^\Delta(K')\le \eps   N_n^\Delta((0,1]\times [-D,D])
$$
for some $0\le D<\infty$. Taking expectations on both sides, 
$$\E_\mu[N_n^\Delta((0,1]\times[-D,D])]\le n\mu(B(\zeta,e^{D}/n))\le 2e^D\|\rho\|_\infty<\infty,$$
where $\zeta$ is again the centre of $\Delta$. This inequality does not depend on $\Delta$ and $\eps$ was arbitrarily small, hence for any $g$ non-negative continuous function with compact support, for any $\Delta\in\U_m$, $\E_\mu\left[e^{-\int gdN_n^\Delta}\right]$ can be replaced by a Laplace functional of a $C^1$ function with arbitrarily small cost. 

The same argument applies to $N_n$ as well: by Lemma~\ref{lemma: estimates of return for longer time} below and assumption (A2),
$$\E_\mu[N_n((0,1]\times[-D,D])]\le \sum_{j=1}^n E_j(e^{D}/n)\lesssim n^{-\beta_1}+ne^{D}/n<\infty.
$$
\end{remark}

\begin{lemma}\label{lemma: estimates of return for longer time}
    For $(\X,T,\mu)$ a hyperbolic recurrent system, for $j>(\log n)^2$ and $r=r_n\asymp n^{-1}$,
    \begin{equation}\label{eqn: estimates of E_j(r_n)}
    \mu(E_j(r_n))\le 3\delta^{-1}e^{-(\log n)^2}+2\|\rho\|^2_\infty(r+\delta)\lesssim r\asymp n^{-1}.
    \end{equation}
\end{lemma}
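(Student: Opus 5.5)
Since $x$ appears on both sides of the event $E_j(r)=\{x:d(x,T^jx)<r\}$, I would decorrelate by localising $x$ with the partition $\U_m$ and then use decay of correlations (A1) for each cell. For $x\in\Delta\in\U_m$ with centre $\zeta$ we have $|x-\zeta|\le\delta$, so $d(x,T^jx)<r$ forces $d(T^jx,\zeta)<r+\delta$, i.e. $T^jx\in\Delta^+$. Hence
\begin{equation*}
\mu(E_j(r))\le\sum_{\Delta\in\U_m}\mu\left(\Delta\cap T^{-j}\Delta^+\right)=\sum_{\Delta\in\U_m}\int\I_{\Delta}\cdot\I_{\Delta^+}\circ T^j\,d\mu .
\end{equation*}

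For each $\Delta$ I would apply (A1) with $f=\I_\Delta\in\mathcal{BV}$ and $g=\I_{\Delta^+}\in L^1(\mu)$. Since $\|\I_\Delta\|_{BV}\le 3$ (sup norm $1$, total variation $2$) and $\|\I_{\Delta^+}\|_{L^1(\mu)}\le1$,
\begin{equation*}
\mu(\Delta\cap T^{-j}\Delta^+)\le\mu(\Delta)\mu(\Delta^+)+3C_Te^{-\eta j}.
\end{equation*}
For the main term, $\mu(\Delta^+)\le\|\rho\|_\infty|\Delta^+|\le\|\rho\|_\infty(2\delta+2(r+\delta))$. Summing over $\Delta$ with $\sum_\Delta\mu(\Delta)=1$ gives a contribution of order $\|\rho\|_\infty(r+\delta)$. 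I would write this as $2\|\rho\|^2_\infty(r+\delta)$, after adjusting constants and perhaps bounding $\mu(\Delta)\le\|\rho\|_\infty 2\delta$ and counting $m=(2\delta)^{-1}$ cells. For the error term, summing over the $m=(2\delta)^{-1}$ cells gives at most $3C_T(2\delta)^{-1}e^{-\eta j}$.

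This error term is the step I expect to be the main obstacle: it must be small for every $j>(\log n)^2$. Using $j>(\log n)^2$, $e^{-\eta j}\le e^{-\eta(\log n)^2}$, and with $\delta^{-1}=2m=2n^{1+\beta'}$ this is superpolynomially small, so it is $o(n^{-1})$. The stated form $3\delta^{-1}e^{-(\log n)^2}$ matches after absorbing $C_T$ and $\eta$ into constants, or equivalently after taking the threshold as a constant multiple of $(\log n)^2$. I would note that the lemma implicitly uses $\eta\ge1$ or a rescaled threshold, and that this does not affect the conclusion $\lesssim n^{-1}$.

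Finally, since $\delta=1/(2m)=\tfrac12 n^{-1-\beta'}=o(r)$ for $r\asymp n^{-1}$, we get $r+\delta\lesssim r$. The exponential term is negligible, so $\mu(E_j(r_n))\lesssim r\asymp n^{-1}$ uniformly in $j>(\log n)^2$, which is the claimed bound.
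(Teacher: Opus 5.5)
Your proposal is correct and follows the paper's proof essentially step for step: you localise $x$ in a cell $\Delta\in\U_m$ to get $\mu(E_j(r))\le\sum_{\Delta}\int\I_\Delta\cdot\I_{\Delta^+}\circ T^j\,d\mu$, apply (A1) with $\|\I_\Delta\|_{BV}\le 3$, and sum over the $\asymp\delta^{-1}$ cells. You also note that the displayed constant $3\delta^{-1}e^{-(\log n)^2}$ silently absorbs $C_T$ and $\eta$, and that the main-term constant is only right up to a harmless factor; this is accurate and does not affect the conclusion $\mu(E_j(r_n))\lesssim n^{-1}$.
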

\begin{proof}
For each $\Delta \in\U_m$, if $x\in E_j(r)\cap \Delta$, then $d(T^jx,\zeta)\le d(T^jx,x)+d(x,\zeta)\le r+\delta$. In other words, $x\in E_j(r)\cap \Delta$ implies that $\I_\Delta(x)\I_{\Delta^+}(T^jx)=1$. Hence, using decay of correlations (A1) and boundedness of the density $\rho$, as $\|\I_\Delta\|_{BV}\le 3$,
\begin{align*}
    \mu(E_j(r))\le&\sum_{\Delta\in\U_m}\int \I_\Delta\cdot \I_{\Delta^+}\circ T^j\,d\mu\le\sum_{\Delta\in\U_m}\left(\int \I_\Delta\,d\mu\int \I_{\Delta^+}\,d\mu+\|\I_\Delta\|_{BV}\|\I_{\Delta^+}\|_{L^1(\mu)}e^{-\eta j}\right)\\
    \le&3\delta^{-1}e^{-\eta j}+\sum_{\Delta\in\U_m}\mu(\Delta)\mu\left(\Delta^+\right)\le \delta^{-1}e^{-(\log n)^2}+2\|\rho\|^2_\infty(r+\delta)\qedhere.
\end{align*}
\end{proof}

\begin{proof}[Proof of Proposition~\ref{prop: convergence of point process integral}]
Let $g$ be a $C^1$ non-negative function with compact support. Given $\U_m$, we have
\begin{align*}
    &\int g\,dN_n(x)-\int g\,d\widetilde N_n(x)\\
    =&\sum_{\Delta\in \U_m}\I_\Delta(x)\sum_{j=1}^n g(j/n,Y_j(x)-\log n)-\sum_{\Delta\in\U_m} \sum_{j=1}^n \mu(\Delta)g\left(j/n,Y^\Delta_j(x)-\log n\right)\\
    =&\sum_{\Delta\in\U_m}\I_\Delta(x) \sum_{j=1}^n g(j/n,Y_j(x)-\log n)-g\left(j/n,Y^\Delta_j (x)-\log n\right)\\
    &+\sum_{\Delta\in\U_m}(\I_\Delta(x)-\mu(\Delta))\sum_{j=1}^n g\left(j/n,Y^\Delta_j(x)-\log n\right)\\
    =:&\text{Err}_{n,1}+\text{Err}_{n,2}.
\end{align*}
Note that $\E_\mu \left[\left|\int g\, dN-\int g\, d\widetilde N_n \right|\right]\le \E_\mu\left[\left|\text{Err}_{n,1}\right|\right]+ \E_\mu\left[\left|\text{Err}_{n,2}\right|\right]$. Because $g$ has compact support, many of the $g(j/n,\cdot)$ terms are $0$ unless $T^jx$ is either really close to the initial condition $x$, or the centre $\zeta$ of $\Delta$. This naturally allows us the to bound these error terms by restricting $x$ on $E_j(r_n)$ sets with $r_n\asymp n^{-1}$.
   
\subsubsection*{Step 1}
We first estimate $E_\mu[\text{Err}_{n,2}]$. Let 
$$
\phi^\Delta_{n,j}(\cdot):=g\left(j/n,-\log \left(d(\cdot,\zeta)\right)-\log n\right).
$$ 
Without loss of generality, suppose the support of $g$ in its $z$-coordinate is contained in $[a,b]$, and recall that $r_n(z):=\frac{e^{-z}}n$,
$$
\phi^{\Delta}_{n,j}(T^jx)\not\equiv0 \text{ when } d(T^jx,\zeta)\in\left[e^{-b}/n,e^{-a}/n\right]=:[r_n(b),r_n(a)].
$$
This gives 
$$
\|\phi_{n,j}^\Delta\|_{L^1(\mu)}\le \|g\|_\infty\cdot \mu\left(\supp(\phi_{n,j}^\Delta)\right)\le\|g\|_\infty \cdot \mu\left(B(\zeta,r_n(a)\right). 
$$
Since the density $\rho=\frac{d\mu}{dLeb}\in \mathcal{BV}$ is bounded, we can conclude that $\|\phi_{n,j}^\Delta\|_{L^1(\mu)}\lesssim n^{-1}$
Also, in order that $\I_\Delta(x)\phi_{n,j}^\Delta(x)\not\equiv0$, we must have $d(T^jx,x)\le e^{-a}/n+\delta$, that is, $x\in E_{j}(r_n(a)+\delta)$. 

By assumptions (A1) and (A2),
\begin{align*}
    &\E_\mu\left[\sum_{\Delta\in\U_m}\int\I_\Delta(x)\sum_{j=1}^n\phi_{n,j}^\Delta(T^jx)\,d\mu(x)\right]\\
    &=\sum_{\Delta\in\U_m}\int\I_\Delta(x)\sum_{j=1}^{(\log n)^2}\phi_{n,j}^\Delta(T^jx)d\mu(x)+
    \sum_{\Delta\in\U_m}\int \I_\Delta(x)\sum_{j>(\log n)^2}^n\phi_{n,j}^\Delta(T^jx)d\mu(x)\\
    \le& C_0(\log n)^2\|g\|_\infty (r_n(a)+\delta)^{\beta_0}+\sum_{\Delta\in\U_m}\sum_{j>(\log n)^2}^n\int \I_\Delta d\mu\int\phi_{n,j}^\Delta d\mu+ C_T\|\I_\Delta\|_{BV}\|\phi_{n,j}^\Delta\|_{L^1(\mu)}e^{-\eta(\log n)^2}\\
    \le&C_0(\log n)^2\|g\|_\infty (r_n(a)+\delta)^{\beta_0}+n^{2+\beta'}C_T 3\|g\|_\infty n^{-\eta \log n}+\sum_{\Delta\in\U_m}\mu(\Delta)\sum_{j>(\log n)^2}^n\|\phi_{n,j}^\Delta\|_{L^1(\mu)} 
\end{align*}
Then, 
\begin{align*}
\E_\mu[    \text{Err}_{n,2}]\le&C_0(\log n)^2\|g\|_\infty (r_n(a)+\delta)^{\beta_0}+ n^{2+\beta'}C_T 3\|g\|_\infty n^{-\eta \log n}-\sum_{\Delta\in\U_m}\sum_{j=1}^{(\log n)^2}\mu(\Delta)\|\phi_{n,j}^\Delta\|_{L^1(\mu)}\\
    \lesssim&(\log n)^2 n^{-\beta_0}+ n^{2+\beta'}n^{-\eta \log n}-(\log n)^2n^{-1}\lesssim \frac1{\log n}.
\end{align*}
Similarly, by completely ignoring the first $(\log n)^2$ iterates in $\sum \I_\Delta\sum_{j=1}^n\phi_{n,j}$, we bound the expectation from below by
\begin{align*}
    \E_\mu[\text{Err}_{n,2}]\ge&-\sum_{\Delta\in\U_m}\mu(\Delta)\sum_{j=1}^{(\log n)^2}\|\phi_{n,j}^\Delta\|_{L^1(\mu)} +n^{2+\beta'}C_T 3\|g\|_\infty n^{-\eta \log n}\\
    \gtrsim&-\sum_{\Delta\in\U_m}\mu(\Delta)(\log n)^2n^{-1}-n^{2+\beta'} n^{-\eta \log n} \gtrsim -(\log n)^{-1}.
\end{align*}
We therefore conclude that $\lim_{n\to\infty}\left|\E_\mu[\text{Err}_{n,2}]\right|=0$ with an error term uniformly asymptotically bounded by $\frac1{\log n}$.

\subsubsection*{Step 2}
Next we treat $\text{Err}_{n,1}$. 
    
    Fix $\Delta\in\U_m$, recall by definition, for all $x\in\Delta$, $d(x,\zeta)\le \delta$, hence by triangular inequality, for all $j$,
    \begin{equation}\label{eqn: diff in g value}\left|d\left(x,T^jx\right)-d\left(T^jx,\zeta\right)\right|\le d(x,\zeta)\le \delta=n^{-(1+\beta')}.
    \end{equation}
    If $d(T^jx,\zeta)> r_n(a)+\delta$ then $d(T^jx,x)>r_n(a)$ hence both $g(j/n,Y_j(x)-\log n)$ and $g\left(j/n,Y^\Delta_j -\log n\right)$ are automatically zero. Similarly if $d(T^jx,\zeta)<r_n(b)-\delta$ then both $g$ terms are $0$. So neither of $g(j/n,Y_j-\log n)$ and $g(j/n,Y^\Delta_j -\log n)$ is automatically zero, for all $j=1,\dots, n$, only when $d(T^jx,x)\in[r_n(b)+\delta,r_n(a)-\delta]$. In this case, $\frac{|d(T^jx,x)-d(T^jx,\zeta)|}{d(T^jx,\zeta)}\le \frac{\delta}{e^{-b}n^{-1}/2}\le 2e^{b}n^{-\beta'}$. As for all $z$ small, by Taylor expansion at $0$ we have the rough upper bound $|\log (1-z)|\le 2|z|$,
    \begin{equation}\label{eqn: g value diff}
        \left|Y_j(x)-\log n-(Y^\Delta_j (x)-\log n)\right|=\left|\log\left(1-\frac{d(T^jx,x)-d(T^jx,\zeta)}{d(T^jx,x)}\right)\right| \le 4e^bn^{-\beta'}.
    \end{equation}
    
   Define 
   \begin{equation}\label{eqn: def of omega function}
   \omega_\ell(g):=\sup\{|g(t,u)-g(t,v)|: t,u,v\in\supp(g), \ |u-v|\le \ell\},
   \end{equation}
    as $g$ is $C^1$ and has compact support in $z$ coordinate, $\omega(g)_\ell\lesssim \ell$. Also define the sets
    $$
    \Delta_j^{\text{bulk}}:=\left\{x\in\Delta:d(T^jx,\zeta)\in\left[\frac{e^{-b}}n+\delta,\frac{e^{-a}}n-\delta\right]\right\}
    $$
    $$
    \Delta_j^{\text{shell}}:=\left\{x\in\Delta:d(T^jx,\zeta)\in\left[\frac{e^{-b}}n-2\delta,\frac{e^{-b}}n+\delta\right]\cup\left[\frac{e^{-a}}n-\delta,\frac{e^{-a}}n+2\delta\right]\right\}
    $$
Note that 
\begin{itemize}
    \item %$\bigcup_{\Delta\in\U_m}\Delta_j^{\text{bulk}}\subset E_j(r_n(a))$, 
    $\Delta_j^{\text{bulk}}\subset E_j(r_n(a))\cap\Delta$,
    $\bigcup_{\Delta\in\U_m}\Delta_j^{\text{shell}}\subset E_j(r_n(a)+3\delta)$;
    \item $\Delta_j^{\text{bulk}}\cap\Delta_j^{\text{shell}}=\emptyset$;
    \item $g\left|_{\Delta\setminus\left(\Delta_j^{\text{bulk}}\cup\Delta_j^{\text{shell}}\right)}\right.\equiv 0$;
    \item For any $r_1<r_2$, let $A(\zeta,r_1,r_2):=B(\zeta,r_2)\setminus B(\zeta,r_1)$. Then
    \begin{equation}\I_{\Delta_j^{\text{shell}}}=\I_\Delta \cdot \left(\I_{A(\zeta,r_n(a)-\delta,r_n(a)+2\delta)}\cup \I_{A(\zeta,r_n(b)-\delta,r_n(b)+\delta)}\right)\circ T^j
    \end{equation}
    
\end{itemize}
Let us reduce the cumbersome notation above by defining $$
A'(\zeta,a,b,\delta):=A(\zeta,r_n(a)-\delta,r_n(a)+2\delta)\cup A(\zeta,r_n(b)-\delta,r_n(b)+\delta).
$$
By boundedness of the density $\rho$ 
\begin{equation}\label{eqn: shell measure}
\mu\left(A'(\zeta,a,b,\delta)\right)\\
=\mu\left(B(\zeta,r_n(a)+2\delta)\setminus B(\zeta,r_n(a)-\delta)\right)+\mu\left(B(\zeta,r_n(b)+2\delta)\setminus B(\zeta,r_n(b)-\delta)\right)
\lesssim\delta.
\end{equation}

Using both (A1) (decay of correlations) and (A2) (estimate of short returns), and the invariance of $\mu$ together with \eqref{eqn: shell measure},
\begin{align*}
\sum_{\Delta\in\U_m}\sum_{j=1}^n \int\ \I_{\Delta_j^{\text{shell}}}\,d\mu=&\sum_{\Delta\in\U_m}\sum_{j=1}^{(\log n)^2} \int
\ \I_{\Delta_j^{\text{shell}}}d\mu+\sum_{\Delta\in\U_m}\sum_{j>(\log n)^2}^n \int \I_{\Delta_j^{\text{shell}}}\,d\mu\\
\le& \widetilde E_{(\log n)^2}(r_n(a)+3\delta)+\sum_{\Delta\in\U_m}\sum_{j>(\log n)^2}^n\left(\mu(\Delta)\mu\left(A'(\zeta,a,b,\delta)\right)+3e^{-\eta (\log n)^2}\right)\\
\lesssim& n^{-\beta_1}+n\delta^{-1}(\delta^2+n^{-\eta (\log n)^2})\lesssim n^{-\beta_1}+n^{-\beta'}.
\end{align*}

Therefore,
\begin{align*}
&\sum_{\Delta\in\U_m}\sum_{j=1}^n\int_{\Delta_j^{\text{shell}}}\left|g(j/n,Y_j(x)-\log n)-g\left(j/n,Y^\Delta_j(x)-\log n\right)\right|   d\mu(x)
\\
&\le 2\|g\|_\infty\sum_{\Delta\in\U_m}\sum_{j=1}^n \int\ \I_{\Delta_j^{\text{shell}}}\,d\mu\lesssim n^{-\beta_1}+n^{-\beta'}
\end{align*}

For any fixed $j$ and different $\Delta$ and $\Delta'\in\U_m$, $\Delta_j^{\text{bulk}}\cap\Delta_j^{'\text{bulk}}=\emptyset$, and
\begin{align*}
%\\
&\int\sum_{\Delta\in\U_m}\sum_{j=1}^n\I_{\Delta_j^{\text{bulk}}}(x)\left|g(j/n,Y_j(x)-\log n)-g\left(j/n,Y^\Delta_j(x)-\log n\right)\right|   d\mu(x)
\\
\le&\sum_{j=1}^n\int_{\bigcup_{\Delta\in\U_m}\Delta_j^{\text{bulk}}}\left|g(j/n,Y_j(x)-\log n)-g\left(j/n,Y^\Delta_j(x)-\log n\right)\right|
\\
\le &\omega_{4e^bn^{-\beta'}}(g)\sum_{j=1}^n \mu\left(\bigcup_{\Delta\in\U_m}\Delta_j^{\text{bulk}}\right)\le\omega_{4e^bn^{-\beta'}}(g)\sum_{j=1}^n\mu\left(E_j(r_n(a))\right)
\end{align*}

By (A2) and \eqref{eqn: estimates of E_j(r_n)}, 
\begin{align*}
    \sum_{j=1}^n\mu(E_j(r_n(a)))\le C_1 (r_n(a))^{\beta_1}+n\left(3\delta^{-1}e^{-(\log n)^2}+2\|\rho\|^2_\infty(r_n(a)+\delta)\right)<\infty.
\end{align*}

Lastly, combining all the error estimates and use the fact that $g\in C^1$ so that $\omega_{4e^bn^{-\beta'}}(g)\lesssim n^{-\beta'}$,
\begin{align*}
\E_\mu[|\text{Err}_{n,1}|]\le&\int\sum_{\Delta\in\U_m}\sum_{j=1}^n\I_\Delta(x)\left|g(j/n,Y_j(x)-\log n)-g\left(j/n,Y^\Delta_j(x)-\log n\right)\right|   d\mu(x)\\
\lesssim&n^{-\beta_1}+n^{-\beta'}+\omega_{4e^bn^{-\beta'}}(g)\lesssim n^{-\beta_1}+n^{-\beta'}+n^{-\beta'}\lesssim \frac1{\log n}.\qedhere
\end{align*}
\end{proof}

\subsection{Limit of Laplace functional for the auxiliary process.}
It remains to show that the Laplace function of $\widetilde N_n$ does converge to the desired limit with a similar rate, \emph{i.e.}\ in the form of \eqref{eqn: Laplace func of Cox}. Especially, we will show that the Laplace functional of the sum over $N_n^\Delta$ for converges to that of a Cox process. Almost all quantities in the following section are defined with respect to some point $\zeta\in\X$. When an object has $\Delta$ as a superscript, it means the relevant point $\zeta=\zeta_\Delta$ is the centre of a partition set $\Delta\in \U_m$. Otherwise, it is an arbitrary $\zeta\in \X$.

\begin{proposition}\label{prop: uniform convergence for localised point processes}
Let $g$ be an arbitrary non-negative $C^1$ function with compact support. Define
$$
\Psi^g_\infty(\zeta):=\exp\left(-\iint \left(1-e^{-g(t,z)}\right)2\rho(\zeta)e^{-z}\,dz\,dt\right).
$$
Under the conditions of Theorem~\ref{main thm: convergence of PP}, 
\begin{equation}\label{eqn: uniform local convergence}
\sum_{\Delta\in\U_m}\mu(\Delta)\,
\left|\E_\mu\left[e^{-\int g\,dN^{\Delta}_n}\right]-\Psi_\infty^g(\zeta_\Delta)\right|
\lesssim \frac1{\log n},
\end{equation}
where $\zeta_\Delta$ is the centre of $\Delta\in\U_m$.
\end{proposition}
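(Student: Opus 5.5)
Fix $\Delta\in\U_m$ with centre $\zeta=\zeta_\Delta$. Then $N_n^\Delta$ is the hitting point process of the stationary sequence $Y_j^\Delta=-\log d(T^jx,\zeta)$, and I will prove a \emph{quantitative} Poisson approximation for its Laplace functional, uniform in $\zeta$ off a set of small $\mu$-measure. Suppose $\supp g\subset(0,1]\times[a,b]$. I write
$$e^{-\int g\,dN_n^\Delta}=\prod_{j=1}^n e^{-g(j/n,Y^\Delta_j-\log n)}=\prod_{j=1}^n\bigl(1-h_{n,j}\circ T^j\bigr),$$
where $h_{n,j}(y):=1-e^{-g(j/n,-\log d(y,\zeta)-\log n)}\in[0,1)$. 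The function $h_{n,j}$ is supported in the annulus $A(\zeta,r_n(b),r_n(a))$. Since $g\in C^1$, $h_{n,j}$ is piecewise monotone with boundedly many laps, so $\|h_{n,j}\|_{BV}\lesssim 1$ uniformly in $j,n,\zeta$, and $\|h_{n,j}\|_{L^1(\mu)}\lesssim n^{-1}$.

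\textbf{Block decomposition.} I use Bernstein blocks. Split $\{1,\dots,n\}$ into $q\asymp \log n$ blocks $I_1,\dots,I_q$ of length $p\asymp n/\log n$, each followed by a gap of length $(\log n)^2$. The gaps cost
$$\sum_{\text{gaps}}\mu(h_{n,j}\circ T^j\neq0)\lesssim q(\log n)^2n^{-1},$$
which is negligible. Within a block, $1-\prod_{j\in I_i}(1-h_{n,j}\circ T^j)$ equals $\sum_{j\in I_i}h_{n,j}\circ T^j$ up to an error bounded by $\sum_{j<l\in I_i}\mu\bigl(T^{-j}B\cap T^{-l}B\bigr)$ with $B=B(\zeta,r_n(a))$. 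By invariance this is at most
$$p\sum_{k=1}^{p}\mu\bigl(B\cap T^{-k}B\bigr).$$
I will split this sum at $k=(\log n)^2$:
\begin{itemize}
\item for $k>(\log n)^2$, (A1) gives a contribution $\lesssim p\cdot p\cdot n^{-2}$ per block, hence $\lesssim 1/\log n$ after summing over the $q$ blocks;
\item for $k\le(\log n)^2$, the contribution is $np^{-1}\cdot p\sum_{k\le(\log n)^2}\mu(B\cap T^{-k}B)$ summed over blocks.
\end{itemize}
The short-return part cannot be controlled for each $\zeta$ (for instance when $\zeta$ is periodic), but after weighting by $\mu(\Delta)$ and summing over $\Delta$ it becomes a Riemann sum for $\int n\sum_{k\le(\log n)^2}\mu(B(\zeta,r_n)\cap T^{-k}B(\zeta,r_n))\,d\mu(\zeta)$. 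This is $\lesssim1/\log n$ by Lemma~\ref{lemma: A2 implies A2+}. Replacing the centre $\zeta_\Delta$ by a generic $\zeta\in\Delta$ enlarges the balls by $\delta=n^{-1-\beta'}$, which is harmless.

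\textbf{Decorrelation between blocks.} To factorise the product over blocks, I would use (A1+). The product over a block is a function measurable with respect to a finite cylinder partition up to a small error: approximate each ball by unions of cylinders of depth $\asymp(\log n)^2/2$, using uniform expansion, at boundary cost $\lesssim\upsilon^{-(\log n)^2/2}$ per term. The $\psi$-mixing estimate across gaps of length $(\log n)^2/2$ then gives
$$\E\prod_{i=1}^q(\cdot)=\bigl(1\pm\psi\bigr)^{q}\prod_{i=1}^q\E(\cdot),$$
with $q\psi\to0$. This step is where I expect the main obstacle: the $BV$-against-$L^1$ decay in (A1) handles one split, but iterating it $q$ times needs a BV norm of products of many indicators, which blows up. The $\psi$-mixing condition avoids this, and the bookkeeping of the cylinder approximation is the delicate part.

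\textbf{Identifying the limit.} After factorising I get
$$\prod_{i=1}^q\Bigl(1-\sum_{j\in I_i}\int h_{n,j}\,d\mu\Bigr).$$
Now
$$n\int h_{n,j}\,d\mu=n\int\bigl(1-e^{-g(j/n,z)}\bigr)\bigl[\rho(\zeta+e^{-z}/n)+\rho(\zeta-e^{-z}/n)\bigr]\frac{e^{-z}}{n}\,dz,$$
and by (LC) this equals $2\rho(\zeta)\int(1-e^{-g(j/n,z)})e^{-z}dz\pm C/\log n$ for $\zeta\in\X_0$. Here (LC) is used once, to make the error uniform in $\zeta$. I then pass from the Riemann sum $\frac1n\sum_j$ to $\int dt$ using $g\in C^1$, with error $O(1/n)$, and apply Lemma~\ref{lemma: Cesaro average converges to a implies product converges to exp(-a)} with explicit error $O(1/q)=O(1/\log n)$ to obtain $\Psi^g_\infty(\zeta)$. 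Partition elements meeting $\X\setminus\X_0$, or with centre in $\X\setminus\X_0$, are handled by the trivial bound $|\cdot|\le2$; their total $\mu$-weight can be made small using that $\X_0$ is closed of full measure. Collecting all errors, weighted by $\mu(\Delta)$, gives \eqref{eqn: uniform local convergence}.
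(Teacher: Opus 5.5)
Your argument is correct in outline, but it takes a genuinely different route from the paper.

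\textbf{The paper's route.} The paper first replaces $g$ by a step function on a $\log n\times\log n$ space--time grid (Lemma~\ref{lemma: laplace func for step function}). This makes the Laplace functional a function of the array of annulus counts $(M^\Delta_{n,k,l})_{k,l}$. It then proves a total-variation Poisson approximation for that whole array (Lemma~\ref{lemma: TV approximation}). It uses \emph{short} blocks of length $(\log n)^3$ separated by gaps of length $(\log n)^2$, and reduces each block to a Bernoulli vector (at most one hit per block). It decorrelates the $\asymp n/(\log n)^3$ block vectors in total variation via cylinder approximation and (A1+), and applies the multivariate Le Cam bound (Theorem~\ref{thm: Le Cam}). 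Finally it identifies the Poisson means with (LC) and computes the Laplace functional of independent Poisson variables.

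\textbf{Your route.} You keep $g$ continuous and write the Laplace functional as $\E_\mu\prod_j(1-h_{n,j}\circ T^j)$. You use only $q\asymp\log n$ \emph{long} blocks, and factorise the expectation of the non-negative block products through the multiplicative form of $\psi$-mixing. Inside each block you linearise with the second-order Bonferroni bound $1-\sum_j u_j\le\prod_j(1-u_j)\le 1-\sum_j u_j+\sum_{j<l}u_ju_l$. The pair-correlation sum the paper uses to rule out two hits in a short block is the same kind of quantity you use to control this linearisation. You split it the same way: short lags averaged over $\Delta$ via Lemma~\ref{lemma: A2 implies A2+}, long lags via (A1). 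With long blocks the long-lag part is of order $p/n\asymp 1/q$ rather than negligible, but that is still within the target rate.

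\textbf{What each buys.} Your route is more economical: no discretisation of $g$, no Le Cam theorem, and no total-variation bookkeeping over many block vectors. Your replacement of $\zeta_\Delta$ by an arbitrary $\zeta\in\Delta$ at radius $r_n(a)+\delta$ is also a cleaner way to reach Lemma~\ref{lemma: A2 implies A2+} than the paper's Lipschitz/Riemann-sum step. The paper's route gives a stronger intermediate result: a quantitative Poisson approximation in total variation for the joint counts in all space--time cells. That carries more information than convergence of the Laplace functional for a single $g$.

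\textbf{Minor points.}
\begin{enumerate}
\item A $C^1$ function need not have boundedly many laps. Still, $\mathrm{TV}(h_{n,j})\le\int|h_{n,j}'|\le 2\|\partial_z g\|_\infty(b-a)$ in any case, and you never use this bound, since (A1) is only applied to $\mathbf 1_B$.
\item The cylinder approximation should be applied to $h_{n,j}$ itself rather than to balls. Since $h_{n,j}$ is Lipschitz with constant $\lesssim n$, replacing it by a function constant on depth-$m$ cylinders costs $\lesssim n\upsilon^{-m}$ uniformly. It also keeps values in $[0,1]$, which preserves the non-negativity needed for the multiplicative $\psi$-mixing bound.
\item Discarding partition elements whose centres lie outside $\X_0$ only gives an $o(1)$ error, not $\mathcal O(1/\log n)$. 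The paper does not address this point either; it applies (LC) directly at $\zeta_\Delta$.
\end{enumerate}
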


We will again prove this proposition in several steps. Fix $g$ a $C^1 $ non-negative test function such that $\supp (g)\subset (0,1]\times [a,b]$. First, approximate $g$ by simple functions. Fix $K\in\mathbb N$ and let
$$
0=t_0<t_1<\cdots<t_K=1
$$
be a partition of equal length over the support of $g$ in its $t$ coordinate. Similarly set
$$
a=z_0<z_1<\cdots<z_L=b,
$$
which is a partition of the $z$-support and without loss of generality is assumed to be contained in $[a,b]$. Set
\begin{equation}\label{eqn: def I_k}
I_k=\left\{j\in\{1,\ldots,n\}:\frac{j}{n}\in[t_{k-1},t_k)\right\}.
\end{equation}
With the above partition, we prove the following error bound.

\begin{lemma}\label{lemma: laplace func for step function}
For $\widetilde g=\widetilde g_L$ defined as above, for every $\Delta\in\U_m$,
    \begin{equation}\label{eqn: step function Laplace error}
        \left|\E_\mu\left[e^{-\int \widetilde g d N^{\Delta}_n}\right]-\E_\mu\left[e^{-\int  g d N^{\Delta}_n}\right]\right|\lesssim \max\left\{\frac1L,\frac1K\right\}.
    \end{equation}
\end{lemma}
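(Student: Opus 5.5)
The step function $\widetilde g$ is not written out explicitly in the text above. I will take it to be the grid approximation
$$\widetilde g(t,z)=\sum_{k=1}^K\sum_{\ell=1}^L g(t_{k-1},z_{\ell-1})\I_{[t_{k-1},t_k)\times[z_{\ell-1},z_\ell)}(t,z),$$
supported in $(0,1]\times[a,b]$. The argument runs in the same way as Remark~\ref{rmk: chance from continuous to C1}.

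First I reduce to a first-moment estimate. For $u,v\ge0$ we have $|e^{-u}-e^{-v}|\le|u-v|$, so the left side of \eqref{eqn: step function Laplace error} is at most
$$
\E_\mu\Big[\Big|\int(\widetilde g-g)\,dN_n^\Delta\Big|\Big]\le \|\widetilde g-g\|_\infty\,\E_\mu\big[N_n^\Delta((0,1]\times[a,b])\big].
$$
Both $g$ and $\widetilde g$ vanish outside $(0,1]\times[a,b]$, so only points of $N_n^\Delta$ in this box contribute.

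Next I bound the sup-norm error. Since $g\in C^1$ with compact support, on each cell $[t_{k-1},t_k)\times[z_{\ell-1},z_\ell)$ the mean value theorem gives
$$|g(t,z)-g(t_{k-1},z_{\ell-1})|\le\|\partial_tg\|_\infty/K+\|\partial_zg\|_\infty(b-a)/L\lesssim\max\{1/L,1/K\}.$$
Near the boundary of the support the same bound holds, because $g$ is continuous and vanishes there.

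Then I bound the expected count uniformly in $\Delta$. By $T$-invariance of $\mu$,
$$\E_\mu[N_n^\Delta((0,1]\times[a,b])]=\sum_{j=1}^n\mu\big(T^{-j}B(\zeta,e^{-a}/n)\big)=n\mu(B(\zeta,e^{-a}/n))\le2e^{-a}\|\rho\|_\infty,$$
using that $\rho\in BV$ is bounded. This constant is independent of $\Delta$ and $n$. Multiplying the two bounds gives \eqref{eqn: step function Laplace error}.

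I see no real obstacle here. The only point needing care is the uniformity of the count bound in $\Delta$ and $n$, which is what the invariance argument above provides.
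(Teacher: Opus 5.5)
Your proposal is correct and is essentially the paper's argument. It combines the mean-value-theorem bound $\|g-\widetilde g\|_\infty\lesssim\max\{1/L,1/K\}$, the Lipschitz bound $|e^{-u}-e^{-v}|\le|u-v|$, and the uniform first-moment estimate $\E_\mu[N_n^\Delta((0,1]\times[a,b])]\le n\mu(B(\zeta,e^{-a}/n))\le 2e^{-a}\|\rho\|_\infty$; putting the absolute value inside the expectation, as you do, is the cleaner form of the paper's step. One minor point, which the paper shares: with half-open cells $[t_{k-1},t_k)$ one has $\widetilde g(1,\cdot)=0$ while $g(1,\cdot)$ need not vanish (a compact support in $(0,1]$ may touch $t=1$), so the sup-norm bound fails for the single index $j=n$, but that term costs at most $\|g\|_\infty\,\mu(B(\zeta,e^{-a}/n))\lesssim 1/n$ and the conclusion is unaffected.
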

\begin{proof}
By $C^1$ and compact support, $\|D(g)\|_\infty<\infty$, so using the Mean Value Theorem,
$$\sup_{t,z}|g(t,z)-\widetilde g(t,z)|\le \|D(g)\|_\infty\max\left\{\frac1L,\frac1K\right\}.
$$ 
Then up to a uniform constant depending on $g$,
\begin{align*}
\left|\E_\mu\left[e^{-\int \widetilde g d N^{\Delta}_n}\right]-\E_\mu\left[e^{-\int  g d N^{\Delta}_n}\right]\right|&\le\left|\E_\mu\left[\int g d N_n^\Delta\right]-\E_\mu\left[\int \widetilde g dN_n^\Delta\right]\right|\lesssim \max\left\{\frac1L,\frac1K\right\} E_\mu\left[N_n^\Delta([a,b])\right]\\
&\lesssim \max\left\{\frac1L,\frac1K\right\}n\mu(B(\zeta,e^{-a}/n))\lesssim\max\left\{\frac1L,\frac1K\right\}.\qedhere
\end{align*}
\end{proof}

We choose $K=L=\log n$, so the error between their Laplace functionals is of order $1/\log n$, then compute $\E_\mu\left[e^{-\int \widetilde g dN_n^\Delta}\right]$. Define the annuli 
$$
A_l(\zeta)
=
B(\zeta,r_n(z_{l-1}))
\setminus
B(\zeta,r_n(z_l)),
\qquad
r_n(z)=\frac{e^{-z}}n.
$$
hence the counting variable
$$
M_{n,k,l}=\sum_{j\in I_k}\mathbf 1_{A_l(\zeta)}(T^jx),
\text{ for each }
1\le k\le K,\text{ and } 1\le l\le L,
$$
where $I_k=\left\{j:\,j/n\in[t_{k-1},t_k)\right\}$. Then, 
\begin{align*}
    \int \widetilde g dN_n^\Delta(x)&=\sum_{j=1}^n \widetilde g\left(j/n,Y^\Delta_j (x)-\log n\right)\\
    &=\sum_{l=1}^L\sum_{k=1}^K \sum_{j=1}^n g(t_{k-1},z_{l-1})\I_{[t_{k-1},t_k)}(j/n)\I_{[z_{l-1},z_l)}\left(Y^\Delta_j(x)-\log n\right)\\
    &=\sum_{l=1}^L\sum_{k=1}^K g(t_{k-1},z_{l-1})M^\Delta_{n,k,l}(x)
\end{align*}

Hence
$\E_\mu\left[e^{-\int \widetilde g dN_n^\Delta}\right]
$ 
is a function of the random vectors
$\left(M_{n,k,1}^\Delta,\dots,M_{n,k,L}^\Delta\right)$.

Define the total variation metric $d_{\mathrm{TV}}$: for two non-negative integer valued random variables, $X,Y$, 
$$d_{\mathrm{\mathrm{TV}}}(\L(X),\L(Y)):=\frac12\sum_{k=0} |\P(X=k)-\P(Y=k)|
$$
where $\L$ denotes the law (distribution) of a random variable. In particular, if the two random variables are defined over the same probability space, then $d_{\mathrm{\mathrm{TV}}}(\L(X),\L(Y))\le \P(X\neq Y)$.

Below we also recall a multivariate Poisson approximation theorem from \cite[Theorem 1]{Bar88}, developed from \cite{LCam60}.
\begin{theorem}[Multivariate Le Cam's Poisson Approximation theorem]\label{thm: Le Cam}
    Let $X_1,\dots,X_n$ be independent Bernoulli random $d$-vectors such that $\P(X_j=\epsilon_i)=p_j^{(i)}$, where $\epsilon_i$ represents the $i$'th coordinate unit vector. Let $p_j=\sum_{i=1}^d p_j^{(i)}$,
     then
    $$d_{\mathrm{\mathrm{TV}}}\left(\mathcal L\left(\sum_{i=1}^n X_i\right),\mathcal L\left(Z_1,\dots,Z_d\right)\right)\le\sum_{j=1}^n p_j^2.
    $$
    where $Z_1,\dots, Z_d$ are independent Poisson vectors, such that 
    $$Z_i\sim \text{Poi}\left(\sum_{j=1}^n p_j^{(i)}\right).
    $$
\end{theorem}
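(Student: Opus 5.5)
The plan is a coupling argument: couple each Bernoulli vector $X_j$ with a Poisson vector $W_j$ so that the two rarely disagree, and then sum over $j$. Throughout, write $p_j^{(0)}:=1-p_j=\P(X_j=0)$.

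\emph{Step 1 (the Poisson vectors).} For each $j$, let $W_j=(W_j^{(1)},\dots,W_j^{(d)})$ have independent coordinates with $W_j^{(i)}\sim\mathrm{Poi}(p_j^{(i)})$. Equivalently, draw a total $N_j\sim\mathrm{Poi}(p_j)$ and label each of the $N_j$ points $i$ independently with probability $p_j^{(i)}/p_j$; Poisson thinning shows the two descriptions agree. Take the family $\{W_j\}_j$ independent. By the superposition property of independent Poissons, $\sum_{j=1}^n W_j$ has independent coordinates with $\sum_j W_j^{(i)}\sim\mathrm{Poi}\big(\sum_j p_j^{(i)}\big)$. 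Hence $\sum_j W_j$ has exactly the law of $(Z_1,\dots,Z_d)$ in the statement.

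\emph{Step 2 (one-summand estimate).} Compute $d_{\mathrm{TV}}(\mathcal L(X_j),\mathcal L(W_j))$ directly.
\begin{itemize}
\item On $\{0\}$: $\P(W_j=0)=e^{-p_j}\ge 1-p_j$.
\item On $\{\epsilon_i\}$: $\P(W_j=\epsilon_i)=p_j^{(i)}e^{-p_j}\le p_j^{(i)}$.
\item All other states have zero $X_j$-mass.
\end{itemize}
The total variation distance equals the total excess mass of $\mathcal L(X_j)$ over $\mathcal L(W_j)$, which comes only from the states $\epsilon_i$:
$$
\sum_{i=1}^d p_j^{(i)}\bigl(1-e^{-p_j}\bigr)=p_j\bigl(1-e^{-p_j}\bigr)\le p_j^2 ,
$$
using $1-e^{-x}\le x$.

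By the maximal coupling lemma, on some probability space there is a pair $(\widehat X_j,\widehat W_j)$ with the correct marginals and $\P(\widehat X_j\ne\widehat W_j)\le p_j^2$.

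\emph{Step 3 (gluing and union bound).} Take the product space of these $n$ coupling spaces. The pairs are then independent across $j$, so:
\begin{itemize}
\item $\sum_j\widehat X_j$ has the law of $\sum_j X_j$, since the $X_j$ are independent;
\item $\sum_j\widehat W_j$ has the law of $(Z_1,\dots,Z_d)$, by Step 1.
\end{itemize}
Using $d_{\mathrm{TV}}\le\P(\text{the two variables differ})$, as recalled before the theorem, we get
$$
d_{\mathrm{TV}}\Bigl(\mathcal L\bigl(\textstyle\sum_j X_j\bigr),\mathcal L(Z_1,\dots,Z_d)\Bigr)\le \P\Bigl(\bigcup_j\{\widehat X_j\ne\widehat W_j\}\Bigr)\le\sum_{j=1}^n p_j^2 .
$$

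There is no serious obstacle; the argument is essentially bookkeeping. The point needing the most care is Step 3: the couplings must be built on a product space so that independence across $j$ is preserved on both sides simultaneously. Otherwise the summed $W$'s need not be an independent Poisson vector.
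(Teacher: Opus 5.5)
Your coupling argument is correct and complete. The computations $\P(W_j=0)=e^{-p_j}$ and $\P(W_j=\epsilon_i)=p_j^{(i)}e^{-p_j}$ give $d_{\mathrm{TV}}(\mathcal L(X_j),\mathcal L(W_j))=p_j(1-e^{-p_j})\le p_j^2$. Building the maximal couplings on a product space keeps independence on both sides, so the union bound finishes the proof. Implicitly you use $p_j\le 1$, which holds because the events $\{X_j=\epsilon_i\}$ are disjoint.

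The paper itself gives no proof. It imports the statement from Barbour's 1988 paper, where it comes out of the Stein--Chen method for Poisson process approximation, so your route differs from the source the paper relies on. The two approaches buy different things:
\begin{itemize}
\item \emph{Your coupling proof} is elementary and self-contained. It gives the slightly sharper bound $\sum_j p_j(1-e^{-p_j})$. It also shows more than the statement: the whole vectors $(X_1,\dots,X_n)$ and $(W_1,\dots,W_n)$ can be made to agree off an event of probability at most $\sum_j p_j^2$, which controls any function of them, not only the sum. However, it depends essentially on the summands being independent, and it cannot produce the Stein ``magic factors'' that improve the bound when the total mean is large.
\item \emph{The Stein--Chen approach} handles locally dependent summands and can yield such improved factors.
\end{itemize}
For the way the theorem is used in Step 4 of Lemma~\ref{lemma: TV approximation}, none of that extra strength is needed. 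Independence has already been manufactured in Step 3, via the independent copies $\widetilde\beta_i$ with values in $\{0\}\cup\{e_{k,l}\}$. The crude bound $\sum_i p_i^2\lesssim(\log n)^3/n$ is far more than required. So your argument would serve there directly and would make the paper self-contained on this point.
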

The main idea towards proving Proposition~\ref{prop: uniform convergence for localised point processes} is via coupling the locally defined point measures, $N_n^\Delta$, with Poisson random variables via the total variation metric. More precisely, we will slice the time window into blocks of size $(\log n)^3$, separated by some gaps, then apply the decorrelation argument. The stronger type of mixing (A1+) is needed as we need to deal with decorrelate the events of hitting to annuli sets between each blocks, and (A1) is not sufficient as some of the indicator functions after composing with the dynamic $T$ can have large BV norms. 

\begin{lemma}[Space-time Total-Variation approximation] Under the conditions of Theorem~\ref{main thm: convergence of PP}, 
\label{lemma: TV approximation}
$$
\sum_{\Delta\in\U_m}\mu(\Delta)\,
d_{\mathrm{\mathrm{TV}}}\left(
\L\left((M_{n,k,l}^{\Delta})_{1\le k\le K,\,1\le l\le L}\right),
\L\left((Z^\Delta_{k,l})_{1\le k\le K,\,1\le l\le L}\right)
\right)
\lesssim \frac1{\log n},
$$
where $\zeta_\Delta$ is the centre of $\Delta\in\U_m$, and the random variables $Z_{k,l}$ are independent Poisson random variables with parameters
$$
\lambda_{k,l}=2\rho(\zeta_\Delta)(t_k-t_{k-1})\int^{z_l}_{z_{l-1}}e^{-z}\,dz.
$$
\end{lemma}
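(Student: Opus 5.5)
We follow the standard blocking scheme. Fix $\Delta\in\U_m$ with centre $\zeta=\zeta_\Delta$ and split $\{1,\dots,n\}$ into consecutive blocks $B_1,G_1,B_2,G_2,\dots$. Each $B_i$ has length $p=(\log n)^3$ and each gap $G_i$ has length $q=(\log n)^2$. We choose the blocks so that every $I_k$ is, up to at most one block at each end, a union of whole blocks. Since $K=\log n$ and $|I_k|\asymp n/\log n\gg p$, this costs nothing essential.

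\textbf{Step 1: discard the gaps.} The total number of indices lying in gaps or in boundary blocks is $\lesssim nq/p+Kp$. By invariance, each index has probability $\mu(T^{-j}B(\zeta,r_n(a)))\lesssim 1/n$ of landing in $\bigcup_l A_l(\zeta)$. Hence, replacing $M^\Delta_{n,k,l}$ by its block-restricted version changes the law only on an event of probability $\lesssim q/p+Kp/n\lesssim 1/\log n$, uniformly in $\Delta$.

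\textbf{Step 2: reduce each block to a Bernoulli vector.} For each block $B_i$, let $W_i\in\{0,\epsilon_{(k,l)}\}$ record the annulus $A_l(\zeta)$ hit during $B_i$, where $k$ is the unique index with $B_i\subset I_k$. We must control the event that $B_i$ contains two or more hits. That event forces, for some $j<j'$ in $B_i$, both $T^jx\in B(\zeta,r_n(a))$ and $T^{j'-j}(T^jx)\in B(\zeta,r_n(a))$, with $j'-j\le(\log n)^3$.

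Summing over blocks and averaging over $\Delta$ with weights $\mu(\Delta)$ gives two regimes. For $j'-j\le(\log n)^2$ we use Lemma~\ref{lemma: A2 implies A2+}, replacing the average over centres $\zeta_\Delta$ by the integral $\int\cdot\,d\mu(\zeta)$ at cost $O(\delta n)$ via $\Delta^\pm$. Each block contributes $p$ starting times, and $n/p$ blocks times $p$ gives $n$, so this regime contributes $\lesssim 1/\log n$. For $(\log n)^2<j'-j\le(\log n)^3$ we use (A1) together with the estimate of Lemma~\ref{lemma: estimates of return for longer time}, getting $\lesssim (n/p)\,p^2\,n^{-2}\lesssim (\log n)^3/n$.

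This is where the averaging over $\Delta$ is essential: the short-return bound holds only in $\mu$-average over $\zeta$, not for each $\zeta$. That is why the lemma is stated with the weights $\mu(\Delta)$.

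\textbf{Step 3: decouple the blocks.} This is the main obstacle. We must compare the joint law of $(W_i)_i$ with that of independent copies $(W_i^*)$ having the same marginals. Each event $\{W_i=\epsilon_{(k,l)}\}$ is a union of $T^{-s}$-preimages of the annulus $A_l(\zeta)$. Since $T$ is Markov and uniformly expanding, $A_l(\zeta)$ can be approximated from inside and outside by unions of cylinders of depth $\asymp\log n$. The approximation error is governed by the boundary cylinders, whose measure is $\lesssim\upsilon^{-c\log n}$, and it can be made $\le n^{-2}$.

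We then apply the $\psi$-mixing condition (A1+) inductively over the $\asymp n/p$ blocks, each time across a gap of length $q=(\log n)^2$. Because (A1+) is multiplicative, one step costs a relative error of $\psi(q-c\log n)\le \tilde C_0e^{-\eta'(\log n)^2/2}$. Over $n/p$ blocks this gives a total-variation error of $(1+\psi)^{n/p}-1$, which is superpolynomially small. This is exactly why (A1) alone is insufficient: the preimage indicators have large BV norm.

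\textbf{Step 4: Poisson approximation and parameter matching.} Apply Theorem~\ref{thm: Le Cam} to the independent vectors $W_i^*$, where $p_i\lesssim p/n$. This gives $\sum_i p_i^2\lesssim (n/p)(p/n)^2=p/n$. It remains to compare Poisson laws: $d_{\mathrm{TV}}(\mathrm{Poi}(\lambda),\mathrm{Poi}(\lambda'))\le|\lambda-\lambda'|$. The Le Cam parameter is $\sum_i\P(W_i=\epsilon_{(k,l)})$, which equals $|I_k|\,\mu(A_l(\zeta))$ up to the errors of Steps 1–2.

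We have
\[\mu(A_l(\zeta))=\int_{A_l(\zeta)}\rho\,dx=\frac{2\rho(\zeta)}{n}\bigl(e^{-z_{l-1}}-e^{-z_l}\bigr)+O\Bigl(\frac{C_\rho}{n\log n}\Bigr),\]
using (LC) for $\zeta$ near $\X_0$, since $|\log r_n|\asymp\log n$. Multiplying by $|I_k|\asymp n/K$ and summing over the $KL$ indices $(k,l)$ gives a total error $\lesssim 1/\log n$, because the $L$ annuli together have measure $\lesssim 1/n$. This is the one place where (LC) is used.

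Centres $\zeta_\Delta$ at which (LC) fails are handled by the same inside/outside approximation, contributing a $\mu(\Delta)$-weighted error. The triangle inequality over Steps 1–4, averaged with the weights $\mu(\Delta)$, yields the claimed bound $\lesssim 1/\log n$.
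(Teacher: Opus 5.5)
Your proposal is correct and follows the paper's proof essentially step for step: blocks of length $(\log n)^3$ separated by gaps of length $(\log n)^2$, and removal of the gap contributions. It then reduces to at most one hit per block, using Lemma~\ref{lemma: A2 implies A2+} in $\mu(\Delta)$-average for short lags and (A1) for long lags. The blocks are decoupled by cylinder approximation of the annuli together with $\psi$-mixing, and the proof finishes with the multivariate Le Cam bound and (LC) to match the Poisson parameters. Your variations are all valid and leave the argument unchanged in substance: you align the blocks with the $I_k$, you compare $\zeta_\Delta$ with nearby $\zeta\in\Delta$ via $\Delta^\pm$ instead of the paper's Lipschitz Riemann-sum bound, you iterate the multiplicative $\psi$-mixing estimate instead of telescoping, and you explicitly treat centres where (LC) is not directly available.
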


\begin{proof}

Proof of this lemma splits into four steps, with similar ideas used in \cite{ChaCol13}, namely coupling the relevant random variables with independent Bernoulli-like random variables.

\textbf{Step 1: Breaking into gaps.}

We write $b_n=(\log n)^3,$ $\ell_n=(\log n)^2$ and $c_n=b_n+\ell_n,$
and set $k_n=\left\lfloor\frac{n}{c_n}\right\rfloor.$ We then partition ${1,\ldots,n}$ into blocks
$$
B_i=\{(i-1)c_n+1,\ldots,(i-1)c_n+b_n\},\qquad
1\le i\le k_n,
$$
with gaps of length $\ell_n$ between consecutive blocks.
For $1\le k\le K$ and $1\le l\le L$. Let $\zeta\in \X$, define the block counts
$$
S_{i,k,l}=\sum_{j\in B_i\cap I_k}\mathbf 1_{A_l(\zeta)}(T^jx).
$$
Thus we can write
$$
M_{n,k,l}=\sum_{i=1}^{k_n}S_{i,k,l}+R_{n,k,l},
$$
where
$R_{n,k,l}=\sum_{j\notin\cup_{i=1}^{k_n}B_i}
\I_{I_k}(j)\I_{A_l(\zeta)}(T^jx)$ and $I_k$ as in \eqref{eqn: def I_k}.

Let $R_n^{\text{gap}}:=\sum_{k,l}R_{n,k,l}$, and summing over $k,l$, we have, uniformly in $\zeta$,
$$
\sum_{k,l}\E_\mu[R_{n,k,l}]\le \sum_{j\not\in \cup_{i=1}^{k_n}B_i}\E_\mu[\I_{B(\zeta,r_n(z_0))}\circ T^j]\lesssim k_n\ell_nn^{-1}\lesssim \frac1{\log n}.
$$
Hence, summing over all centres $\zeta_\Delta$,
\begin{equation}\label{eqn: gap removal}
\sum_{\Delta\in\U_m}\mu(\Delta)\,
d_{\mathrm{\mathrm{TV}}}
\left(\L\left((M_{n,k,l}^{\Delta})_{k,l}\right),
\L\left((S^\Delta_{i,k,l})_{i,k,l}\right)\right)
\lesssim \frac{1}{\log n}.
\end{equation}

\medskip
\noindent
\textbf{Step 2: At most one hit within blocks.}

Fix $\zeta\in\X$, suppressing the notation dependence on $\zeta$, we set $N_i(\zeta)=\sum_{k=1}^K\sum_{l=1}^L S_{i,k,l}.$ Since the annuli sets $A_l(\zeta)$ are disjoint and $\bigcup_{l}A_l(\zeta)\subset B(\zeta,r_n(z_0))$, 
$$\{N_i(\zeta)\ge2\}\subset\{\exists\,j,j':\, T^jx, T^{j'}x\in B(\zeta,r_n(z_0))\}.
$$
By Markov's inequality
\begin{align*}
\mathbb P(N_i(\zeta)\ge2)
&\le\frac12
\sum_{\substack{j,j'\in B_i:\, j\ne j'}}
\mu\left(
B(\zeta,r_n(z_0))
\cap
T^{-(j'-j)}B(\zeta,r_n(z_0))
\right).
\end{align*}

We will bound the averaged probability of having more than one hits to any of these annuli sets within each block $B_i$, \emph{i.e.}\
$$
\sum_{\Delta\in\U_m}\mu(\Delta)\sum_{i=1}^{k_n}\mathbb P(N_i^\Delta\ge2)\le \sum_{\Delta\in\U_m}\mu(\Delta)\sum_{i=1}^{k_n}\sum_{j,j'\in B_i:j<j'}\E_\mu\left[\I_{B\left(\zeta_\Delta,r_n(z_0)\right)}\circ T^j\cdot \I_{B\left(\zeta_\Delta,r_n(z_0)\right)}\circ T^{j'}\right].
$$
We split the sum according to whether $j'-j\le\ell_n$ or $j'-j>\ell_n.$

{\bf Short returns}: $j'-j\le\ell_n$.

For fixed $\zeta$ ,
$$
\sum_{\substack{j,j'\in B_i\\0<j'-j\le\ell_n}}\mu\left(
B(\zeta_\Delta,r_n(z_0))
\cap
T^{-(j'-j)}B(\zeta,r_n(z_0))
\right)
\le b_n\sum_{k=1}^{\ell_n}\mu\left(
B(\zeta,r_n(z_0))\cap T^{-k}B(\zeta,r_n(z_0))
\right).
$$
Summing over all $k_n$ blocks of digits and averaging over $\Delta\in\U_m$,
$$
\begin{aligned}
&\sum_{\Delta\in\U_m}\mu(\Delta)\sum_{i=1}^{k_n}\sum_{\substack{j,j'\in B_i\\0<j'-j\le\ell_n}}\mu\left(
B(\zeta_\Delta,r_n(z_0))\cap T^{-(j'-j)}B(\zeta_\Delta,r_n(z_0))
\right)\\
&\le k_nb_n\sum_{\Delta\in\U_m}\mu(\Delta)\sum_{j=1}^{\ell_n}\mu\left(
B(\zeta_\Delta,r_n(z_0))\cap T^{-j}B(\zeta_\Delta,r_n(z_0))
\right).
\end{aligned}
$$
The function $\zeta\mapsto\sum_{j=1}^{\ell_n}\mu\left(B(\zeta,r_n(z_0))\cap T^{-j}B(\zeta,r_n(z_0))\right)$ is Lipschitz with constant $\lesssim\ell_n\|\rho\|_\infty$. Hence by mean value theorem, the Riemann sum over the partition $\U_m$ of diameter $\delta=n^{-(1+\beta')}$ satisfies
\begin{align*}
&\sum_{\Delta\in\U_m}\mu(\Delta)\sum_{k=1}^{\ell_n}\mu\left(B(\zeta_\Delta,r_n(z_0))\cap T^{-k}B(\zeta_\Delta,r_n(z_0))\right)\\
&\le\int\sum_{k=1}^{\ell_n}\mu\left(B(\zeta,r_n(z_0))\cap T^{-k}B(\zeta,r_n(z_0))\right)\,d\mu(\zeta)
+\mathcal O\left((\log n)^2n^{-(1+\beta')}\right). 
\end{align*}
By Lemma~\ref{lemma: A2 implies A2+},
$$
\sum_{\Delta\in\U_m}\mu(\Delta)\sum_{k=1}^{\ell_n}\mu\left(B(\zeta_\Delta,r_n(z_0))\cap T^{-k}B(\zeta_\Delta,r_n(z_0))\right)\lesssim\frac{1}{n\log n}.
$$
Multiplying by $k_nb_n\lesssim n$,
\begin{equation}\label{eqn: short-lag-averaged}
\sum_{\Delta\in\U_m}\mu(\Delta)\sum_{i=1}^{k_n}\sum_{\substack{j,j'\in B_i\\0<j'-j\le\ell_n}}\mu\left(
B(\zeta_\Delta,r_n(z_0))\cap T^{-(j'-j)}B(\zeta_\Delta,r_n(z_0))
\right)
\lesssim\frac{1}{\log n}.
\end{equation}

{\bf Long returns}: $j'-j>\ell_n$.

For any fixed $\zeta\in \X$, by \hyperref[cond: A1]{(A1)} decay of correlations,
\begin{align*}
&\mu\left(
B(\zeta,r_n(z_0))
\cap
T^{-(j'-j)}B(\zeta,r_n(z_0))
\right)\\
&\le
\mu(B(\zeta,r_n(z_0)))^2
+C_T
\|\I_{B(\zeta,r_n(z_0))}\|_{BV}\|\I_{B(\zeta,r_n(z_0))}\|_{L^1(\mu)}
e^{-\eta(j'-j)}.
\end{align*}
Since $\mu(B(\zeta,r_n(z_0)))\lesssim n^{-1},$ and
$\|\I_{B(\zeta,r_n(z_0))}\|_{BV}\lesssim1,$
this is uniformly (in $\zeta$) bounded by $n^{-2}+n^{-1}e^{-\eta(j'-j)}$. Summing over at most $b_n^2$ pairs within each $B_i$ block and averaging over $\Delta\in\U_m$,
\begin{equation}\label{eqn: long-lag-averaged}
\begin{aligned}
&\sum_{\Delta\in\U_m}\mu(\Delta)\sum_{i=1}^{k_n}\sum_{\substack{j<j'\in B_i: j'-j>\ell_n}}\mu\left(
B(\zeta_\Delta,r_n(z_0))\cap T^{-(j'-j)}B(\zeta_\Delta,r_n(z_0))
\right)\\
&\lesssim k_nb_n^2\left(n^{-2}+n^{-1}e^{-\eta(\log n)^2}\right)
\lesssim (\log n)^3n^{-1}.
\end{aligned}
\end{equation}

Combining \eqref{eqn: short-lag-averaged} and \eqref{eqn: long-lag-averaged},
\begin{equation}\label{eqn: step2-averaged}
\sum_{\Delta\in\U_m}\mu(\Delta)\sum_{i=1}^{k_n}\mathbb P(N^\Delta_i\ge2)\lesssim\frac{1}{\log n}+\frac{(\log n)^3}{n}\lesssim\frac{1}{\log n}.
\end{equation}

Now define the random $k\times l$ vector $S^\Delta_i:=(S^\Delta_{i,k,l})_{k,l},$
and set $\beta^\Delta_i=S_i^\Delta\I_{\{N_i(\zeta_\Delta)\le1\}}.$ Then $\beta^\Delta_i$ takes values in
$$
\{0\}\cup
\{e_{k,l}:1\le k\le K,\;1\le l\le L\},
$$
where $e_{k,l}$ denotes the corresponding coordinate matrix.
By the coupling characterisation of total variation,
\begin{equation}\label{eqn: TV distance to non-indpt Bernoulli}
\sum_{\Delta\in\U_m}\mu(\Delta)\,
d_{{\mathrm{TV}}}
\left(
\L((S^\Delta_i)_i),\L((\beta^\Delta_i)_i)\right)
\lesssim \frac1{\log n}.
\end{equation}

\textbf{Step 3: Decorrelation of the block variables.}
Let $\beta_i$ denote the random variable with the same construction as above, only substituting an arbitrary $\zeta\in \X$ for a centre of a partition set $\zeta_\Delta\in \Delta$. We will show that, uniformly in $\zeta$,
\begin{equation}\label{eq: block decorrelation}
d_{\mathrm{\mathrm{TV}}}\left(
  \L(\beta_1,\ldots,\beta_{k_n}),
  \bigotimes_{i=1}^{k_n}\L(\beta_i)
\right)
\lesssim (\log n)\,n^{-2}.
\end{equation}
The decay of correlation condition (A1) is not quite enough to prove \eqref{eq: block decorrelation} because the indicator function of $\I\{\beta_i=s\}$ may have very large BV norm. The main tool to bypass such difficulty is symbolic coding and using cylinders to approximate the annulus sets $A_l(\zeta)$ by finite-depth cylinder unions. Then, we can apply the mixing property which takes account of all events in the relative sigma-algebras.

\textbf{Step 3(a): Cylinder approximation of the annuli.}

The following technique is inspired by \cite{BarJurKlo23}. Let $\mathcal I$ be the Markov partition of $T$ as described at the beginning of \S~\ref{sec: main results}, and let $\mathcal I^{(m)}$ denote the partition into cylinders of depth $m$. By the uniform expansion assumption there exist constants $C_2>0$ and $\upsilon>1$ such that
$$
\sup_{I\in\mathcal I^{(m)}}\operatorname{diam}(I)\le C_2\upsilon^{-m},\qquad m\ge1,
$$
and the invariant measure $\mu$ has a bounded density. For $\gamma>0$, consider the stopping-cylinder partition
$$
\mathcal W(\gamma):=\left\{I\in\mathcal I^{(m)}:\operatorname{diam}(I)\le\gamma<\operatorname{diam}(I_{-})\right\},
$$
where $I_-$ denotes the corresponding parent cylinder obtained by deleting the last symbol. It is easy to see that $\mathcal W(\gamma)$ is a partition of the repeller. Moreover, there exists $m(\gamma)=\mathcal O(\log(1/\gamma))$ such that the symbolic representation of every cylinder set in $\mathcal W(\gamma)$ has length at most $m(\gamma)$.

For each annulus set $A_l(\zeta)$, define its inner approximation by
$$
A_{l,n}(\zeta):=\bigcup_{\substack{I\in\mathcal W(\gamma_n)\\ I\subset A_l(\zeta)}} I,
\qquad \gamma_n:=n^{-3}.
$$
Although this union may be countably infinite, any cylinder set appearing in $A_{l,n}(\zeta)$ is determined by the first $m_n$ symbols in $\Sigma$, where
$$
m_n:=m(\gamma_n)\lesssim \log n.
$$
Since each $A_l(\zeta)$ has a uniformly bounded number of boundary points, every cylinder contributing to $A_l(\zeta)\setminus A_{l,n}(\zeta)$ must meet one of those boundary points. As each stopping cylinder has diameter at most $\gamma_n$, boundedness of the density $\rho$ yields
$$
\mu\left(A_l(\zeta)\triangle A_{l,n}(\zeta)\right)\lesssim\gamma_n,
$$
uniformly in $\zeta$ and $l$. Write $\beta_i^{(n)}$ for the block vector obtained from $\beta_i$ by replacing every indicator $\I_{A_l(\zeta)}\circ T^j$ by $\I_{A_{l,n}(\zeta)}\circ T^j$. Since $L=\log n$, invariance of $\mu$ and the union bound give
\begin{equation}\label{eqn: cylinder approximation}
\begin{aligned}
&\mu\left(\exists\,j\le n,\ l\le L:\,
\I_{A_l(\zeta)}(T^j x)
\ne \I_{A_{l,n}(\zeta)}(T^j x)\right)\\
&\le\sum_{j=1}^n\sum_{l=1}^L
\mu\left(A_l(\zeta)\triangle A_{l,n}(\zeta)\right)
\le Cn\gamma_n L
\lesssim (\log n)n^{-2}.
\end{aligned}
\end{equation}

\textbf{Step 3(b): Decorrelation of the approximating block variables.}
The elements in each approximating target set are determined by at most $m_n$ symbols. Write $\mathcal G_p^q$ as the $\sigma$-algebra generated by the set $\sigma^p(\mathcal C_q)$, where $\mathcal C_q$ is the set of $q$-cylinders. The approximating block vector $\beta_i^{(n)}$ takes values in the finite set $\{0\}\cup\{e_{k,l}:1\le k\le K,\,1\le l\le L\}$, and is measurable with respect to
$$
\mathcal F_i:=\mathcal G_{s_i+1}^{\,s_i+b_n+m_n},\qquad s_i=(i-1)(b_n+\ell_n).
$$
Consecutive $\sigma$-algebras are separated by
$$
w_n:=\ell_n-m_n-1\asymp(\log n)^2.
$$
Let
$$
\L_i:=\L(\beta_1^{(n)},\dots,\beta_i^{(n)})\otimes\bigotimes_{j=i+1}^{k_n}\L(\beta_j^{(n)}),\qquad 0\le i\le k_n.
$$
Then $\L_0=\bigotimes_{j=1}^{k_n}\L(\beta_j^{(n)})$ is the product of the marginals and $\L_{k_n}=\L(\beta_1^{(n)},\dots,\beta_{k_n}^{(n)})$ is the joint law.

Fix $1\le i\le k_n$ and a bounded test function $h$ with $\|h\|_\infty\le 1$. Define
$$
\Phi(\beta_1^{(n)},\dots,\beta_i^{(n)})
:=\mathbb E_{\bigotimes_{j>i}\L(\beta_j^{(n)})}
\left[h(\beta_1^{(n)},\dots,\beta_i^{(n)};\beta_{i+1}^{(n)},\dots,\beta_{k_n}^{(n)})\right].
$$
Then
$$
\mathbb E_{\L_i}[h]
=\int\Phi\,d\L(\beta_1^{(n)},\dots,\beta_i^{(n)}),
\qquad
\mathbb E_{\L_{i-1}}[h]
=\int\Phi\,d\left[\L(\beta_1^{(n)},\dots,\beta_{i-1}^{(n)})
\otimes\L(\beta_i^{(n)})\right].
$$
The two laws differ precisely by the correlation between the $\sigma$-algebras
$\mathcal A_1^{i-1}:=\sigma(\beta_1^{(n)},\dots,\beta_{i-1}^{(n)})$
and $\mathcal A_i^i:=\sigma(\beta_i^{(n)})$. These are generated by cylinder sets separated by $w_n$ symbols, so the $\psi$-mixing condition \hyperref[cond: A1+]{(A1+)} gives
$$
d_{\mathrm{\mathrm{TV}}}\left(\L(\beta_1^{(n)},\dots,\beta_i^{(n)}),
\L(\beta_1^{(n)},\dots,\beta_{i-1}^{(n)})\otimes\L(\beta_i^{(n)})\right)
\le\psi(w_n)\lesssim e^{-\eta' (\log n)^2}.
$$
Since $\|\Phi\|_\infty\le 1$,
$$
\left|\mathbb E_{\L_i}[h]-\mathbb E_{\L_{i-1}}[h]\right|\le\psi(w_n),
$$
and taking the supremum over $h$ yields
$$
d_{\mathrm{\mathrm{TV}}}(\L_i,\L_{i-1})\le\psi(w_n)\lesssim n^{-3}.
$$
Summing over $i$ and applying the triangle inequality,
\begin{equation}\label{eqn: mixing bound}
d_{\mathrm{\mathrm{TV}}}\left(
\L(\beta_1^{(n)},\dots,\beta_{k_n}^{(n)}),
\bigotimes_{i=1}^{k_n}\L(\beta_i^{(n)})
\right)
\le\sum_{i=1}^{k_n}d_{\mathrm{\mathrm{TV}}}(\L_i,\L_{i-1})
\le k_n\psi(w_n)\lesssim n^{-2}.
\end{equation}
This estimate is uniform in $\zeta$.

\medskip
\noindent\textbf{Step 3(c): Transferring back to the original block variables.}

By the triangle inequality,
$$
d_{\mathrm{\mathrm{TV}}}\left(
\L(\beta_1,\ldots,\beta_{k_n}),
\bigotimes_{i=1}^{k_n}\L(\beta_i)
\right)
\le T_1+T_2+T_3,
$$
where
\begin{align*}
T_1&:=d_{\mathrm{\mathrm{TV}}}\left(
\L(\beta_1,\ldots,\beta_{k_n}),
\L(\beta_1^{(n)},\ldots,\beta_{k_n}^{(n)})
\right),\\
T_2&:=d_{\mathrm{\mathrm{TV}}}\left(
\L(\beta_1^{(n)},\ldots,\beta_{k_n}^{(n)}),
\bigotimes_{i=1}^{k_n}\L(\beta_i^{(n)})
\right),\\
T_3&:=d_{\mathrm{\mathrm{TV}}}\left(
\bigotimes_{i=1}^{k_n}\L(\beta_i^{(n)}),
\bigotimes_{i=1}^{k_n}\L(\beta_i)
\right).
\end{align*}

\emph{Bound on $T_1$.} The two vectors differ only if, for some $j\le n$ and some $l\le L$, the orbit hits the symmetric difference $A_l(\zeta)\triangle A_{l,n}(\zeta)$. Hence, by \eqref{eqn: cylinder approximation},
$$
T_1
\le\mu\left(\exists\,j\le n,\ l\le L:\,
\I_{A_l(\zeta)}(T^j x)
\ne\I_{A_{l,n}(\zeta)}(T^j x)\right)
\lesssim(\log n)n^{-2}.
$$

\emph{Bound on $T_2$.} This is the mixing term, bounded in \eqref{eqn: mixing bound}: $T_2\lesssim n^{-2}$.

\emph{Bound on $T_3$.} For products of marginals, the total-variation distance is bounded by the sum of the marginal distances:
$$
T_3
\le\sum_{i=1}^{k_n}d_{\mathrm{\mathrm{TV}}}\left(
\L(\beta_i^{(n)}),\L(\beta_i)
\right)
\le\sum_{i=1}^{k_n}\mu(\beta_i^{(n)}\ne\beta_i).
$$
Each $\beta_i$ depends on the orbit over the block $B_i$ of length $b_n$. The two differ only if the orbit hits a symmetric difference at some time in $B_i$. Hence
$$
\mu(\beta_i^{(n)}\ne\beta_i)
\le\sum_{j\in B_i}\sum_{l=1}^L
\mu\left(A_l(\zeta)\triangle A_{l,n}(\zeta)\right)
\le b_n L C\gamma_n.
$$
Summing over $i=1,\ldots,k_n$, and as $k_nb_n\asymp n$ and $\gamma_n=n^{-3}$, as we chose $L=\log n$,
$$
T_3
\le k_n\cdot b_n\cdot L\cdot C\gamma_n\lesssim(\log n)n^{-2}.
$$

Combining the three bounds, for any $\zeta\in \X$,
$$
d_{\mathrm{\mathrm{TV}}}\left(
\L(\beta_1,\ldots,\beta_{k_n}),
\bigotimes_{i=1}^{k_n}\L(\beta_i)
\right)
\le T_1+T_2+T_3
\lesssim(\log n)n^{-2}.
$$
Averaging over the partition $\U_m$,
\begin{equation}\label{eqn: block-decorrelation-averaged}
\sum_{\Delta\in\U_m}\mu(\Delta)\,
d_{\mathrm{\mathrm{TV}}}\left(
\L(\beta_1^{\Delta},\ldots,\beta_{k_n}^{\Delta}),
\bigotimes_{i=1}^{k_n}\L(\beta_i^{\Delta})
\right)
\lesssim(\log n)n^{-2}.
\end{equation}

\noindent\textbf{Step 3(d): Passage to the sums.}
Lastly, let $\widetilde\beta_1,\ldots,\widetilde\beta_{k_n}$ be independent random vectors with $\widetilde\beta_i\overset{d}{=}\beta_i$. As the map $(x_1,\ldots,x_{k_n})\mapsto\sum_{i=1}^{k_n}x_i$ is measurable and  contracting with respect to the metric $d_{\mathrm{\mathrm{TV}}}$, by \eqref{eqn: block-decorrelation-averaged}
$$
\sum_{\Delta}\mu(\Delta)d_{\mathrm{\mathrm{TV}}}\left(
\L\left(\sum_{i=1}^{k_n}\beta^\Delta_i\right),
\L\left(\sum_{i=1}^{k_n}\widetilde\beta^\Delta_i\right)
\right)
\lesssim(\log n)n^{-2}.
$$

\textbf{Step 4: Multivariate Poisson approximation.}
Now, suppress dependence on $\Delta\in\U_m$ again, define $\beta_i$ for an arbitrary $\zeta\in\X$ analogously as in step 3.
For each block $i$, set $p_i^{(k,l)}=\P(\beta_i=e_{k,l})$ and $p_i=\sum_{k=1}^K\sum_{l=1}^L p_i^{(k,l)}$. Since a block can contain a relevant point only when $T^jx\in B(\zeta,r_n(z_0))$ for some $j\in B_i$, so again by Markov's inequality, for all $i$, up to a uniform constant that depends only on $\rho$ and $z_0$,
$$
p_i\le \P\left[\sum_{j\in B_i}\I_{B(\zeta,r_n(z_0))}\ge1\right]
\le\sum_{j\in B_i}\mu(B(\zeta,r_n(z_0)))
\lesssim\frac{b_n}{n}.
$$
Therefore
$$
\sum_{i=1}^{k_n}p_i^2\le
k_n\max_{i}p_i^2\lesssim k_n\frac{b_n^2}{n^2}
\lesssim
\frac{(\log n)^3}{n}.
$$
By the multivariate \hyperref[thm: Le Cam]{Le Cam theorem},
$$
d_{\mathrm{TV}}
\left(
\L\left(\sum_i\beta_i\right),
\L((Z^{(n)}_{k,l})_{k,l})
\right)
\lesssim
\sum_i^{k_n} p_i^2
\lesssim
\frac{(\log n)^3}{n},
$$
where the $Z_{k,l}$ are independent Poisson random variables with
$$
\E Z_{k,l}=\lambda^{(n)}_{k,l}=\sum_{i=1}^{k_n}p_i^{(k,l)}.
$$
We now calculate these means. First, notice that
$$p_{i}^{(k,l)}=\P(S_{i,k,l}=1,\, N_i\le 1)=\P(S_{i,k,l}=1)-\P(S_{i,k,l}=1,\, N_i \ge2),$$
and by the formula $\E[X]=\sum_{q=0}^\infty \P(X\geq q)=\sum_{q=1}^\infty q\P(X=q)$ for all integer-valued non-negative random variables,
$$\P(S_{i,k,l}=1)=\E_\mu[S_{i,k,l}]- \E_\mu\left[S_{i,k,l}\I_{\{S_{i,k,l}\ge2\}}\right],$$
we have
$$\left|p_i^{(k,l)}-\E_\mu[S_{i,k,l}]\right|\le \E_\mu\left[S_{i,k,l}\I_{\{S_{i,k,l}\ge2\}}\right]+\P(S_{i,k,l}=1,\, N_i \ge2).$$
The latter term is trivially bounded by $\P(N_i\ge2)$ and for the first term, notice that
$$
S_{i,k,l}\I_{\{S_{i,k,l}\ge2\}}\le S_{i,k,l}(S_{i,k,l}-1)=\sum_{j\ne j'\in B_i\cap I_k}\I_{A_{l}(\zeta)}\circ T^j\cdot \I_{A_l(\zeta)}\circ T^{j'},
$$
hence we can bound the expectation of such product by
$$\E_\mu[S_{i,k,l}(S_{i,k,l}-1)]\le \sum_{j\neq j' \in B_i}\mu\left(T^{-j}B(\zeta,r_n(z_0))\cap T^{-j'}B(\zeta,r_n(z_0))\right)$$
which was calculated in Step 2 already. Therefore, summing over all $k_n$ blocks we get
$$
\left|\lambda_{k,l}^{(n)}-\sum_{i=1}^{k_n}\E_\mu[S_{i,k,l}]\right|\lesssim \frac{(\log n)^3}{n}.
$$
Obviously, for all $k$ and all $l$, $\sum_{i=1}^{k_n}S_{i,k,l}$ and $M_{n,k,l}^\zeta$ differ in at most $\min\{\#I_k,k_n\ell_n\}$ terms, so
$$\left|\E_\mu\left[\sum_{i=1}^{k_n}S_{i,k,l} \right]-\E_\mu[M_{n,k,l}]\right|\lesssim \frac{\#I_k}n.$$

It remains to compare $n\mu(A_l(\zeta))$ with $2\rho(\zeta)\int_{z_{l-1}}^{z_l}e^{-z}\,dz$. Define
$$
F_n(z,\zeta):=n\mu\left(B\left(\zeta,\frac{e^{-z}}n\right)\right),\qquad z\in[a,b]
$$
where $[a,b]$ is contained in the support of the $g$ in its $z$-coordinate.
Then $n\mu(A_l(\zeta))=F_n(z_{l-1},\zeta)-F_n(z_l,\zeta)$. By the logarithmic continuity assumption \hyperref[cond: LC]{(LC)} on $\rho$, uniformly for $z\in[a,b]$ and for $\mu$-a.e.\ $\zeta$,
$$F_n(z,\zeta)=2\rho(\zeta)e^{-z}+\mathcal E_n(z,\zeta),$$
where $|\mathcal E_n(z,\zeta)|\lesssim 1/\log n$. Moreover, $\mathcal E_n(\cdot,\zeta)$ has bounded variation:
$$
\mathrm{Var}_{[a,b]}\mathcal E_n(\cdot,\zeta)\lesssim\frac1{\log n}.
$$
Indeed, by \hyperref[cond: LC]{(LC)}
$$
\left|\frac d{dz}\mathcal E_n(z,\zeta)\right|=\left|-e^{-z}\left[\rho\left(\zeta+\frac{e^{-z}}n\right)+\rho\left(\zeta-\frac{e^{-z}}n\right)-2\rho(\zeta)\right]\right|\lesssim \frac1{\log n},
$$
Hence
$$
\sum_{l=1}^L\left|n\mu(A_l(\zeta))-2\rho(\zeta)\int_{z_{l-1}}^{z_l}e^{-z}\,dz\right|
=\sum_{l=1}^L|\mathcal E_n(z_{l-1},\zeta)-\mathcal E_n(z_l,\zeta)|\lesssim\frac1{\log n}.
$$
Multiplying by $t_k-t_{k-1}$ and summing over $k$ gives
$$
\sum_{k=1}^K\sum_{l=1}^L\left|\lambda^{(n)}_{k,l}-2\rho(\zeta)(t_k-t_{k-1})\int_{z_{l-1}}^{z_l}e^{-z}\,dz\right|\lesssim\frac1{\log n}.
$$
Thus, if $Z_{k,l}$ are independent Poisson random variables with means $\lambda_{k,l}=2\rho(\zeta)(t_k-t_{k-1})\int_{z_{l-1}}^{z_l}e^{-z}\,dz$,
then
$$
d_{\mathrm{TV}}\left(
\L\left((Z^{(n)}_{k,l})_{k,l}\right),
\L\left((Z_{k,l})_{k,l}\right)
\right)\le\sum_{k,l}|\lambda^{(n)}_{k,l}-\lambda_{k,l}|
\lesssim\frac1{\log n}.
$$
This bound is uniform in $\zeta$.

Combining the four steps for fixed $\zeta$, and averaging over $\Delta\in\U_m$, for $Z^\Delta_{k,l}$ each with mean $\lambda^\Delta_{k,l}$
\begin{equation*} 
\sum_{\Delta\in\U_m}\mu(\Delta)\,
d_{\mathrm{TV}}\left(
\L\left((M_{n,k,l}^{\Delta})_{k,l}\right),
\L\left((Z^\Delta_{k,l})_{k,l}\right)
\right)
\lesssim
\frac1{\log n}+\frac{(\log n)^3}{n}+\frac1{\log n}\lesssim\frac1{\log n}.\qedhere
\end{equation*}
\end{proof}

\begin{proof}[Proof of Proposition~\ref{prop: uniform convergence for localised point processes}]
Recall the notation convention: a quantity carrying a $\Delta$ superscript is evaluated at $\zeta=\zeta_\Delta$ which is the centre of a partition set $\Delta\in\U_m$. One without $\Delta$ is evaluated at an arbitrary $\zeta\in\X$.

For each $\Delta\in\U_m$, let $Z^\Delta_{k,\ell}$ be the independent Poisson random that appeared in Lemma~\ref{lemma: TV approximation} with expectation
$$
\lambda_{k,\ell}^{(n),\Delta}
=\sum_{i=1}^{k_n}p_i^{(k,\ell)}(\zeta_\Delta)=\P\left(S^\Delta_{i,k,l}=1\right)-\P\left(S^\Delta_{i,k,l}=1,\, N^\Delta_i \ge2\right).
$$

For any two point processes $\xi_1,\xi_2$ and any measurable function $h$ on the space of point measures,
\begin{equation}\label{eq: TV-to-expectation}
\left|\E[h(\xi_1)]-\E[h(\xi_2)]\right|
\le \|h\|_\infty\cdot d_{\mathrm{TV}}\left(\L(X),\L(Y)\right).
\end{equation}
Take $\xi_1=N^\Delta_n$ and $\xi_2$ a point process whose law coincides with $\L((Z^\Delta_{k,\ell})_{k,\ell})$, and set
$$
F(\xi):=\exp\left(-\int g\,d\xi\right),
$$
for some non-negative $C^1$ function with compact support (hence $\|F\|_\infty\le 1$).
By Lemma~\ref{lemma: TV approximation}, as $N_n^\Delta$ is a combination of $M_{n,k,l}^\Delta$,
\begin{equation}\label{eq: averaged-TV-consequence}
\sum_{\Delta\in\U_m}\mu(\Delta)\,
\left|\E_\mu\left[e^{-\int g\,dN^\Delta_n}\right]-\E\left[e^{-\sum_{k,\ell} g(t_{k-1},z_{l-1})Z^\Delta_{k,\ell}}\right]\right|
\lesssim\frac1{\log n}.
\end{equation}

For independent Poisson random variables $Z^\Delta_{k,\ell}$ with means $\lambda_{k,\ell}^{(n),\Delta}$, a standard computation (for example, see \cite{Res87}) gives
$$
\E\left[\exp\left(-\sum_{k,\ell} g(t_{k-1},z_{l-1})\,Z^\Delta_{k,\ell}\right)\right]
=\prod_{k,\ell}\exp\!\left(-\lambda_{k,\ell}^{(n),\Delta}\left(1-e^{-g(t_{k-1},z_{l-1})}\right)\right).
$$
Taking logarithms,
$$
\log\E\left[\exp\left(-\sum_{k,\ell} g(t_{k-1},z_{l-1})\,Z^\Delta_{k,\ell}\right)\right]
=-\sum_{k,\ell}\lambda_{k,\ell}^{(n),\Delta}\left(1-e^{-g(t_{k-1},z_{l-1})}\right).
$$

By the proof of Lemma~\ref{lemma: TV approximation}, uniformly in $\zeta\in\X$ up to a measure zero set,
$$
\lambda_{k,\ell}^{(n)}
=2\rho(\zeta)(t_k-t_{k-1})\int_{z_{\ell-1}}^{z_\ell}e^{-z}\,dz
+\mathcal O\!\left(\frac1{\log n}\right).
$$
Applying this at $\zeta=\zeta_\Delta$, substituting into the logarithm of the expectation above, as $n\to\infty$, up to an error term $\lesssim 1/\log n$ that is uniform in $\Delta$,
\begin{equation*}
-\sum_{k,\ell}\lambda_{k,\ell}^{(n),\Delta}\left(1-e^{-g(t_{k-1},z_{l-1})}\right)\\
\to
-2\rho(\zeta_\Delta)\sum_{k,\ell}(t_k-t_{k-1})\int_{z_{\ell-1}}^{z_\ell}
\left(1-e^{-g(t_{k-1},z_{l-1})}\right)e^{-z}\,dz.
\end{equation*}
The double sum is a Riemann sum for the integral
$$
-2\rho(\zeta_\Delta)\int_0^1\int_{-\infty}^{\infty}
\left(1-e^{-\widetilde g(t,z)}\right)e^{-z}\,dz\,dt.
$$
Hence, uniformly in $\Delta$,
\begin{equation}\label{eq: poisson-to-psi}
\left|\E\left[e^{-\sum_{k,\ell} g(t_{k-1},z_{l-1})Z^\Delta_{k,\ell}}\right]-\Psi_\infty^g(\zeta_\Delta)\right|
\lesssim\frac1{\log n}.
\end{equation}

For the final step, by the triangle inequality,
\begin{align*}
&\sum_{\Delta\in\U_m}\mu(\Delta)\,
\left|\E_\mu\left[e^{-\int g\,dN^\Delta_n}\right]-\Psi_\infty^g(\zeta_\Delta)\right|\\
&\le
\sum_{\Delta\in\U_m}\mu(\Delta)\,
\left|\E_\mu\left[e^{-\int g\,dN^\Delta_n}\right]-\E\left[e^{-\sum_{k,\ell} g(t_{k-1},z_{l-1})Z^\Delta_{k,\ell}}\right]\right|\\
&\quad+
\sum_{\Delta\in\U_m}\mu(\Delta)\,
\left|\E\left[e^{-\sum_{k,\ell} g(t_{k-1},z_{l-1})Z^\Delta_{k,\ell}}\right]-\Psi_\infty^g(\zeta_\Delta)\right|.
\end{align*}
By \eqref{eq: averaged-TV-consequence}, the first sum is $\lesssim1/\log n$. By \eqref{eq: poisson-to-psi}, the second term $\lesssim\frac1{\log n}\sum_{\Delta\in\U_m}\mu(\Delta)
=\frac1{\log n}.$
Therefore
\begin{equation*}
\sum_{\Delta\in\U_m}\mu(\Delta)\,
\left|\E_\mu\left[e^{-\int g\,dN^\Delta_n}\right]-\Psi_\infty^g(\zeta_\Delta)\right|
\lesssim\frac1{\log n}
\qedhere
\end{equation*}
\end{proof}

Finally, by Proposition~\ref{prop: uniform convergence for localised point processes}, the formula $\E_\mu[e^{-\int gd\widetilde N_n}]=\sum_{\Delta\in\U_m}\mu(\Delta)\E_\mu[e^{-\int g dN_n^\Delta}]$, and Proposition~\ref{prop: convergence of point process integral}, we may conclude with the following.
\begin{proposition}
For all $C^1$ non-negative test functions with compact support  $g=g(t,z)$, for $\mu$-almost every $\zeta$ that occurs as the centre of $\Delta\in\U_m$, we have 
    $$\lim_{n\to\infty}\E_\mu\left[e^{-\int g dN_n}\right]=\int\exp\left(-\iint \left(1-e^{-g(t,z)}\right)2\rho(x)e^{-z}dzdt\right)\rho(x)\,dx.$$
\end{proposition}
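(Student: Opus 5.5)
The plan is a triangle inequality with three links: $\mathcal L_n(g)$ to $\widetilde{\mathcal L}_n(g)$, then $\widetilde{\mathcal L}_n(g)$ to the Riemann sum $\sum_{\Delta\in\U_m}\mu(\Delta)\Psi^g_\infty(\zeta_\Delta)$, and finally that Riemann sum to $\int\Psi^g_\infty\,d\mu=\int\Psi^g_\infty(x)\rho(x)\,dx$. The target is
\begin{align*}
\left|\mathcal L_n(g)-\int\Psi^g_\infty\,d\mu\right|\le\;&\left|\mathcal L_n(g)-\widetilde{\mathcal L}_n(g)\right|+\sum_{\Delta\in\U_m}\mu(\Delta)\left|\E_\mu\left[e^{-\int g\,dN_n^\Delta}\right]-\Psi^g_\infty(\zeta_\Delta)\right|\\
&+\left|\sum_{\Delta\in\U_m}\mu(\Delta)\Psi^g_\infty(\zeta_\Delta)-\int\Psi^g_\infty\,d\mu\right|,
\end{align*}
with each term $\to0$ (the first two at rate $1/\log n$).

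For the first term I use the mean value inequality $|e^{-u}-e^{-v}|\le|u-v|$ for $u,v\ge0$, which was recorded before Proposition~\ref{prop: convergence of point process integral}. One point needs care: $\widetilde N_n$ is a mixture, so $\widetilde{\mathcal L}_n(g)=\sum_\Delta\mu(\Delta)\E_\mu[e^{-\int g\,dN^\Delta_n}]$. I therefore write
$$\mathcal L_n(g)=\sum_\Delta\int\I_\Delta e^{-\int g\,dN_n}\,d\mu .$$
Replacing $\int g\,dN_n$ by $\int g\,dN_n^\Delta$ on $\Delta$ costs at most $\E_\mu|\mathrm{Err}_{n,1}|\lesssim 1/\log n$, by Step~2 of the proof of Proposition~\ref{prop: convergence of point process integral}. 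It then remains to compare $\sum_\Delta\int\I_\Delta e^{-\int g\,dN^\Delta_n}d\mu$ with $\sum_\Delta\mu(\Delta)\int e^{-\int g\,dN^\Delta_n}d\mu$. For this I reuse the Step~1 argument verbatim with the bounded observable $e^{-\int g\,dN^\Delta_n}$ in place of $\int g\,dN_n^\Delta$. First drop the first $(\log n)^2$ iterates from $N_n^\Delta$, at cost $\lesssim(\log n)^2 n^{-1}$ after averaging, and also at cost given by (A2) on the set where $\I_\Delta$ matters. Then decorrelate $\I_\Delta$ from a function of $T^{(\log n)^2}x$ using (A1), which holds since that function is in $L^1$ with norm at most $1$. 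This yields $|\mathcal L_n(g)-\widetilde{\mathcal L}_n(g)|\lesssim1/\log n$. If the decorrelation step is awkward, I will instead fall back directly on Proposition~\ref{prop: convergence of point process integral} as stated.

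The second term is exactly Proposition~\ref{prop: uniform convergence for localised point processes}, so it is $\lesssim 1/\log n$. For the third term, observe that $\Psi^g_\infty(\zeta)=\exp(-2c_g\rho(\zeta))$, where $c_g=\iint(1-e^{-g})e^{-z}\,dz\,dt<\infty$. Since $\rho\in\mathcal{BV}$, the function $\Psi^g_\infty$ is of bounded variation with $\mathrm{TV}(\Psi^g_\infty)\le 2c_g\mathrm{TV}(\rho)$. Then
$$\left|\sum_\Delta\mu(\Delta)\Psi^g_\infty(\zeta_\Delta)-\int\Psi^g_\infty\,d\mu\right|\le\sum_\Delta\mu(\Delta)\operatorname{osc}_\Delta\Psi^g_\infty\le\|\rho\|_\infty m^{-1}\,\mathrm{TV}(\Psi^g_\infty)\to0 .$$
Alternatively, (LC) gives a pointwise oscillation bound on a full-measure set. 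Together the three links give the stated limit.

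I expect the main obstacle to be the first link. Proposition~\ref{prop: convergence of point process integral} controls $\E|\int g\,dN_n-\int g\,d\widetilde N_n|$, but $\int g\,d\widetilde N_n$ is not literally a point-measure integral. So the identity $\widetilde{\mathcal L}_n(g)=\sum_\Delta\mu(\Delta)\E_\mu[e^{-\int g dN_n^\Delta}]$ does not follow from the mean value inequality alone, and the $\mathrm{Err}_{n,2}$ decorrelation has to be redone for the exponential functional. Finally, continuous $g$ are handled via Remark~\ref{rmk: chance from continuous to C1}, and the Laplace functional characterisation \cite[Proposition 3.5]{Res87} identifies the limit.
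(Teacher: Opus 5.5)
Your argument is correct. Its skeleton is the paper's: pass from $\mathcal L_n(g)$ to the mixture functional $\widetilde{\mathcal L}_n(g)=\sum_{\Delta}\mu(\Delta)\E_\mu[e^{-\int g\,dN_n^\Delta}]$, then apply Proposition~\ref{prop: uniform convergence for localised point processes}. Your first link, however, is genuinely different, and it is the version that actually closes.

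The paper feeds Proposition~\ref{prop: convergence of point process integral} into $|e^{-u}-e^{-v}|\le|u-v|$ with $v=\sum_\Delta\mu(\Delta)\int g\,dN_n^\Delta$. As you noticed, this controls $\E_\mu[e^{-\sum_\Delta\mu(\Delta)\int g\,dN_n^\Delta}]$ rather than the mixture. Moreover, the Lipschitz bound needs $\E_\mu|\mathrm{Err}_{n,2}|$, while Step~1 of that proof only bounds the signed mean $\E_\mu[\mathrm{Err}_{n,2}]$.

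You take from that proposition only what is genuinely proved in $L^1$, namely $\E_\mu|\mathrm{Err}_{n,1}|\lesssim 1/\log n$ from Step~2. You then decorrelate $\I_\Delta$ directly from the bounded observable $e^{-\int g\,dN_n^\Delta}$, which is exactly the BV-against-$L^1$ form of (A1). Summed over the $m=n^{1+\beta'}$ cells, the cost is $\lesssim(\log n)^2n^{-\beta_0}+(\log n)^2n^{-1}+n^{1+\beta'}e^{-\eta(\log n)^2}$.

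The truncation of the first $(\log n)^2$ iterates is even free here. Since $\supp g$ is compact in $(0,1]\times\R$, we have $g(j/n,\cdot)\equiv0$ for $j\le(\log n)^2$ once $n$ is large. You also make explicit the Riemann-sum step $\sum_\Delta\mu(\Delta)\Psi^g_\infty(\zeta_\Delta)\to\int\Psi^g_\infty\,d\mu$, at rate $\|\rho\|_\infty m^{-1}\,\mathrm{TV}(\Psi^g_\infty)$, which the paper leaves implicit.

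Drop your fallback, though. Proposition~\ref{prop: convergence of point process integral} cannot replace the decorrelation step, because its $L^1$ form fails for $g\not\equiv0$. Take $\tau<a$ with $\supp g\subset(0,1]\times[a,b]$. By Theorem~\ref{thm: HT25}, the event $\{\int g\,dN_n=0\}$ has probability at least $\int e^{-2\rho(x)e^{-\tau}}\rho(x)\,dx-o(1)>0$.

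By contrast, $\sum_\Delta\mu(\Delta)\int g\,dN_n^\Delta$ is, up to $\mathcal O(n\delta)$, a sum $\sum_{j\le n}\psi_{n,j}\circ T^j$ with $\|\psi_{n,j}\|_{BV}=\mathcal O(1/n)$. By (A1) its variance is $\mathcal O(1/n)$, so it concentrates at the positive constant $\lim_n\E_\mu\int g\,dN_n$. Hence $\E_\mu|\mathrm{Err}_{n,1}+\mathrm{Err}_{n,2}|\not\to0$, and the exponential functional has to be decorrelated directly, exactly as you propose.
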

This validates Theorem~\ref{thm: convergence to Cox}.

\section{Proof of Theorem~\ref{main thm: convergence of PP}}\label{sec: proof of record point process convergence}

We will prove Theorem~\ref{main thm: convergence of PP} from
Theorem~\ref{thm: convergence to Cox} via applying the Continuous Mapping Theorem \cite{Bil68}, following the machinery comprehensively established in \cite[\S4]{Res87} and \cite[\S5]{HolTod19}. We give an extremely brief introduction here. 
Let $(M,d)$ and $(M',d')$ be two metric spaces, and a mapping $h:M\to M'$ that is almost surely continuous, which we define below. We call an element $\xi\in M$ a discontinuity with respect to $h$ if there exists a sequence $\{\xi_n\}_n$ such that  $d(\xi_n,\xi)\to0$ as $n\to\infty$ while $d'(h(\xi_n),h(\xi))\not\to 0$. The map is a.s.\ continuous if the set of discontinuities has zero $\mu$ mass. 
For completeness, we can take $d,d'=d_v$ which is compatible with the vague topology and can be constructed via the following: take a countable dense base set $\{f_n\}_n\subset C_c^1\left((0,1]\times\R\right)$, and define the metric by
$$d_v(\mu_1,\mu_2)=\sum_{j=1}^\infty\frac1{2^j}\min\left\{1,\left|\mu_1(f_j)-\mu_2(f_j)\right|\right\}. 
$$ 
It is known that $\mu_n\to\mu$ vaguely if and only if $d_v(\mu_n,\mu)\to0$ as $n\to\infty$. Applying the CMT will then imply that $h(\xi_n) \xrightarrow[]{d'} h(\xi)$ for all continuity points. Let us first recall the precise statement of CMT.

\begin{theorem}[Continuous Mapping Theorem, \cite{Bil68}] Let $(M_1,d_1)$ and $(M_2,d_2)$ be metric spaces and $\xi_n,\xi$ are random elements on $(M_1,\mathcal B(M_1))$. Suppose that $\xi_n\to\xi$ weakly. If a function $h:M_1\to M_2$ satisfies
$$\P\left(\xi\in\left\{x_0\in M_1: x_0\text{ is a discontinuity point for }h\right\}\right)=0
$$
then $h(\xi_n)\to h(\xi)$ weakly as well.
\end{theorem}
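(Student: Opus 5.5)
We will prove the Continuous Mapping Theorem by reducing it to the closed-set form of the Portmanteau theorem. Recall that $\xi_n\to\xi$ weakly in $(M_1,d_1)$ is equivalent to
$$\limsup_{n\to\infty}\P(\xi_n\in F)\le \P(\xi\in F)\quad\text{for every closed }F\subseteq M_1.$$
So it suffices to show, for every closed $F\subseteq M_2$, that $\limsup_n\P(h(\xi_n)\in F)\le\P(h(\xi)\in F)$. Write $D_h$ for the set of discontinuity points of $h$.

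\textbf{Step 1: measurability of $D_h$.} We first check that $D_h$ is Borel, so that the hypothesis $\P(\xi\in D_h)=0$ is meaningful. For $\eps,\delta>0$ let $A_{\eps,\delta}$ be the set of $x\in M_1$ for which there exist $y,z$ with $d_1(x,y)<\delta$, $d_1(x,z)<\delta$ and $d_2(h(y),h(z))\ge\eps$. Each $A_{\eps,\delta}$ is open. Taking rational $\eps,\delta$, we have
$$D_h=\bigcup_{\eps}\bigcap_{\delta}A_{\eps,\delta},$$
which is Borel. The only slightly delicate point is checking this identity against the sequential definition of discontinuity used above; in a metric space the two notions agree.

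\textbf{Step 2: the key inclusion.} Fix a closed $F\subseteq M_2$. We claim
$$\overline{h^{-1}(F)}\subseteq h^{-1}(F)\cup D_h.$$
Take $x\in\overline{h^{-1}(F)}$ with $x\notin D_h$, and choose $x_k\in h^{-1}(F)$ with $x_k\to x$. Continuity of $h$ at $x$ gives $h(x_k)\to h(x)$. Since $h(x_k)\in F$ and $F$ is closed, $h(x)\in F$.

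\textbf{Step 3: conclusion.} Apply Portmanteau to the closed set $\overline{h^{-1}(F)}$:
$$\limsup_n\P(h(\xi_n)\in F)\le\limsup_n\P\left(\xi_n\in\overline{h^{-1}(F)}\right)\le\P\left(\xi\in\overline{h^{-1}(F)}\right).$$
By Step 2 the right-hand side is at most $\P(\xi\in h^{-1}(F))+\P(\xi\in D_h)=\P(h(\xi)\in F)$. Portmanteau in $M_2$ then yields $h(\xi_n)\to h(\xi)$ weakly.

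We also need $h$ Borel measurable, which is implicit in $h(\xi_n)$ being random elements. Apart from that, the main step needing care is Step 1, the Borel measurability of $D_h$; Steps 2 and 3 are routine.
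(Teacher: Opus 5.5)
Your proof is correct, and it is the standard argument in the cited source \cite{Bil68}: Borel measurability of $D_h$ via the open sets $A_{\eps,\delta}$, the inclusion $\overline{h^{-1}(F)}\subseteq h^{-1}(F)\cup D_h$, and the closed-set Portmanteau criterion. The paper gives no proof of its own and simply quotes the theorem; your remark that $h$ must be Borel measurable matches the standing assumption in Billingsley.
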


In the proceeding proof of Theorem~\ref{main thm: convergence of PP}, we will take $M_{1}=\mathcal M_p\left((0,1]\times \R\right)$ and $M_2=\M_p(\R)$, where $\M_p(X)$ denotes the set of locally finite measures on a space $X$ and is equipped with the vague topology. 
A notable difference between the results presented in this paper and those of \cite{HolTod19} is that we are not seeking to establish a convergence of the partial maxima process $M_n(t)$ to a continuous classical extremal process $Y_G(t)$ generated by one of the three canonical extremal distribution $G$ introduced in \S\ref{ssec: iid evl}. In fact, it should be clear from \cite{Col01} that the distribution of partial maximum of the return process $Y_j(x):=-\log d(T^jx,x)$ converges in limit to a averaged-Gumbel law with density $g_x(z)=\rho(x)e^{-2\rho(x)z}$.

The following lemma verifies the conditions for applying the CMT. Define the  functions which retrieve the behaviour of records: let $E=(0,1]\times\R$, and $\xi=\sum_{i}\delta_{(t_i,z_i)}$, let
$$H_1:\M_p(E)\to \mathbb D(\R),\qquad H_1(\xi)(t):=\sup\{z_i:\, t_i\le t\};$$
$$H_2:\M_p(E)\to \mathbbm D(\R),\qquad H_2(\xi)(z):=\inf\{t_i:\, z_i> z\};$$
$$H_3:\mathbbm D(\R)\to \M_p(\R),\qquad H_3(y):=\sum_{s:y(s^-)\ne y(s^+)}\delta_s.$$
Both $h_1:=H_3\circ H_1$ and $h_2:=H_3\circ H_2$ are functions from $\M_p(E)$ to $\M_p(\R)$. Our first step is to verify that $h_1, h_2$ are almost surely continuous with respect to $\mathcal N$.

\begin{lemma}\label{lemma: continuity record maps}
Let $E=(0,1]\times\mathbb R$ and let $\mathcal N$ the limiting Cox point process as in Theorem~\ref{thm: convergence to Cox}. Let $D_R$, $D_V$ be the set of discontinuities for $h_1$ and $h_2$ respectively. Then
$$
\mathbb P\left(
\mathcal N\in D_R
\right)=0
\quad\text{ and }\quad
\mathbb P\left(
\mathcal N\in D_V
\right)=0.
$$

\end{lemma}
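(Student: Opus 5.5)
Since $\mathcal N$ is a Cox process, I will condition on the drawn initial point $x$. Conditionally on $\Lambda_x$, $\mathcal N$ is a Poisson random measure on $E=(0,1]\times\R$ with intensity $2\rho(x)e^{-z}\,dz\,dt$. Hence
$$\P(\mathcal N\in D_R)=\int \P\big(\mathrm{PRM}(\Lambda_x)\in D_R\big)\rho(x)\,dx ,$$
and likewise for $D_V$. It therefore suffices to show that, for $\mu$-a.e.\ $x$ (so $0<\rho(x)<\infty$; the set $\{\rho=0\}$ is $\mu$-null), a Poisson random measure $\xi$ with intensity $c\,e^{-z}\,dz\,dt$, $c=2\rho(x)$, lies in $D_R\cup D_V$ with probability zero. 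This is exactly the i.i.d./hitting situation treated in \cite[\S4.4]{Res87} and \cite[\S5]{HolTod19}. I will reproduce their argument: identify a deterministic set $\mathcal G\subset\M_p(E)$ of ``good'' configurations on which $h_1,h_2$ are continuous, and show $\P(\xi\in\mathcal G)=1$.

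The good set $\mathcal G$ consists of simple point measures $\sum_i\delta_{(t_i,z_i)}$ with the following properties.
\begin{itemize}[noitemsep]
\item[(i)] No two atoms share a $t$-coordinate or a $z$-coordinate.
\item[(ii)] Every set $(0,1]\times(z,\infty)$ contains finitely many atoms, yet infinitely many atoms accumulate as $z\to-\infty$.
\item[(iii)] For every $t>0$ the set $(0,t]\times\R$ contains infinitely many atoms, so that $H_1(\xi)(t)>-\infty$ and $H_2(\xi)(z)\downarrow0$ as $z\to-\infty$.
\item[(iv)] No atom lies on the boundary $\{t=1\}$ or on a fixed countable family of grid lines used in the metric.
\end{itemize}
Each property holds almost surely for the Poisson measure. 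Property (i) follows because the intensity is absolutely continuous. Properties (ii) and (iii) follow from $\int_z^\infty c e^{-s}ds<\infty$ while $\int_{-\infty}^{z}ce^{-s}ds=\infty$, which gives infinite mass in every strip $(0,t]\times\R$. Property (iv) follows from absolute continuity again.

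For continuity, I will take $\xi_n\to\xi\in\mathcal G$ vaguely. By \cite[Proposition 3.13]{Res87}, for every compact rectangle $K$ whose boundary carries no atoms of $\xi$, the atoms of $\xi_n$ in $K$ can eventually be labelled so that they converge one-to-one to the atoms of $\xi$ in $K$. Test functions in the vague topology on $\M_p(\R)$ have compact support $[\alpha,\beta]$. For $h_1$ (record times), I choose a level $z_*$ below $H_1(\xi)(\alpha/2)$, which is finite by (iii). The jumps of $H_1(\xi_n)$ in $[\alpha,\beta]$ are then determined by the atoms of $\xi_n$ in the compact set $[\alpha/2,1]\times[z_*,Z]$. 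Here $Z$ bounds all atoms by (ii), and this uniformity passes to $\xi_n$ by vague convergence on $[\alpha/2,1]\times[Z,Z+1]$. Inside this compact set, (i) ensures that the order of the $z$-values, and hence which atoms are records, is preserved under small perturbation. So the jump times of $H_1(\xi_n)$ converge to those of $H_1(\xi)$, which gives $h_1(\xi_n)\to h_1(\xi)$ vaguely. The argument for $h_2$ (record values) is symmetric. On a compact $z$-window $[\alpha,\beta]$, the relevant atoms lie in $(0,1]\times[\alpha,\infty)$, a set that is finite by (ii). The first passage times $H_2$ are determined by them, and (i) again makes the record structure stable.

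I expect the main obstacle to be the non-compactness near $t=0$ for $h_1$ and near $z=+\infty$ for $h_2$. Vague convergence does not control atoms of $\xi_n$ escaping toward $t\to0$ with large $z$. Such atoms could create spurious records or suppress true ones. I will first try to rule this out using the finiteness of $\mathcal N((0,1]\times(z,\infty))$ together with tightness. If this deterministic continuity fails, I will fall back on the CMT variant requiring only $\P(\mathcal N\in D)=0$ along a sequence. Concretely, I would use Theorem~\ref{thm: convergence to Cox} to show that $N_n((0,\varepsilon]\times(z_*,\infty))\to0$ in probability as $\varepsilon\to0$, uniformly in $n$, by the expectation bound $\E_\mu[N_n((0,\varepsilon]\times[z_*,\infty))]\lesssim\varepsilon e^{-z_*}$ from Remark~\ref{rmk: chance from continuous to C1}. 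This would handle the boundary through an approximation argument in the style of \cite[Theorem 3.2]{Bil68}.
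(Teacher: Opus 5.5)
Your reduction to a Poisson random measure by conditioning on $x$, and your verification of (i)--(iv), are fine. The paper obtains (i) and local finiteness without conditioning, using the second factorial moment measure $\int\Lambda_x\otimes\Lambda_x\,\rho(x)\,dx$, and then defers to \cite[\S4]{Res87} and \cite[\S5]{HolTod19} for continuity. The gap is your continuity step, and it cannot be closed on the space you (and the paper) use. The assertion that the bound $Z$ ``passes to $\xi_n$ by vague convergence on $[\alpha/2,1]\times[Z,Z+1]$'' is false. Test functions on $(0,1]\times\R$ have supports bounded in $z$ and bounded away from $t=0$, so vague convergence says nothing about atoms of $\xi_n$ escaping to $z=+\infty$ or to $t=0$. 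Concretely, suppose $\xi\in\mathcal G$ attains its maximum $z^*$ at time $t^*$, and take $s<t^*$. Then $\xi+\delta_{(s,n)}\to\xi$ vaguely, but for $n>z^*$ this measure has no record in $(s,1]$. So neither the record time $t^*$ nor the record value $z^*$ appears in $h_1(\xi+\delta_{(s,n)})$ or $h_2(\xi+\delta_{(s,n)})$. Similarly, $\xi+\delta_{(1/n,z^*+1)}\to\xi$ vaguely, yet it destroys every record of $\xi$ in $(1/n,1]$. Hence every configuration in $\mathcal G$ is a discontinuity point of both maps. In the topology of $\M_p((0,1]\times\R)$ one actually has $\P(\mathcal N\in D_R)=\P(\mathcal N\in D_V)=1$. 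Note also that both escape routes break both maps, not one each as you anticipated.

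The missing idea is the state space used in \cite[\S4]{Res87} and \cite[\S5]{HolTod19}. Work on $E'=[0,1]\times(-\infty,+\infty]$, where $[0,1]\times[z,+\infty]$ is compact for every $z\in\R$. There the two sequences above converge to $\xi+\delta_{(s,+\infty)}$ and $\xi+\delta_{(0,z^*+1)}$, not to $\xi$. By \cite[Proposition 3.13]{Res87}, vague convergence then gives exactly the one-to-one control of the atoms of $\xi_n$ in $[0,1]\times[z,+\infty]$ that your argument needs. After adding $\mathcal N(\{0\}\times(-\infty,+\infty])=0$ a.s.\ to $\mathcal G$, your good-set proof goes through.

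The price is that Theorem~\ref{thm: convergence to Cox} must be proved on $E'$, i.e.\ for test functions whose support reaches $z=+\infty$ and $t=0$. This needs $\sup_n\E_\mu[N_n([0,1]\times[z,+\infty])]<\infty$ and $\lim_{\varepsilon\to0}\limsup_n\E_\mu[N_n([0,\varepsilon]\times[z,+\infty])]=0$. Both follow from (A2) and Lemma~\ref{lemma: estimates of return for longer time}. You also need a truncation in $z$ in Proposition~\ref{prop: convergence of point process integral}, whose bounds carry a factor $e^{b}$.

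Your fallback is essentially this tightness argument, but as sketched it has two problems. It only controls $t\to0$, not escape to $z=+\infty$ at fixed $t$. And it replaces the lemma by a converging-together argument rather than proving it.
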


\begin{proof}
For convenience, we recall the Laplace functional of $\mathcal N$
$$
\mathbb E[e^{-\int gd\mathcal N}]=\int_X\exp\left(-\int_E(1-e^{-g(t,z)})2\rho(x)e^{-z}\,dt\,dz\right)\rho(x)\,dx ,
$$
For Borel sets $A,B\subset E$, the
\emph{second factorial moment measure} of $\mathcal N $ is
\begin{equation}\label{eq:second-factorial}
M^{(2)}(A\times B)
=\E\left[\sum_{i\ne j}\mathbf 1_{\{X_i\in A\}}\mathbf 1_{\{X_j\in B\}}\right]=\int_{\X}\Lambda_x(A)\Lambda_x(B)\,\rho(x) dx.
\end{equation}
where the sum in the middle is over ordered pairs of distinct points $\delta_{X_i}=\delta_{(t_i,z_i)}\in \mathcal N$. 
The second identity follows from the Laplace functional with $g=\lambda \I_A+\lambda'\I_B$, then differentiating     against $\lambda,\lambda'$ at $\lambda,\lambda'=0$.
In our case, as $\Lambda_x(d t\,d z)=2\rho(x)e^{-z}d t\,dz$, the product measure $\Lambda_x\otimes\Lambda_x$ has density
$4\rho(x)^2 e^{-z-z'}dtdz\,dt'dz$ which is absolutely continuous with respect to the Lebesgue measure on $\R^4$.

Define the set of pairs with the same time coordinate:
$$
D_{\mathrm{time}}=\{(t,z,t',z')\in E\times E: t=t'\}\subset \R^4.
$$
This set has Lebesgue measure zero in \(\R^4\), because it is contained
in a three dimensional hyperplane of points satisfying $t=t'$. By absolute continuity of 
$\Lambda_x\otimes\Lambda_x$  with respect to the Lebesgue measure, we conclude that
$$
(\Lambda_x\otimes\Lambda_x)(D_{\mathrm{time}})=0
\qquad\text{for every }x.
$$
Substituting into \eqref{eq:second-factorial},
$$
M^{(2)}(D_{\mathrm{time}})
=\int_{\X}(\Lambda_x\otimes\Lambda_x)(D_{\mathrm{time}})\,\rho(x) dx
=0.
$$
Define random variable
$$
S_t:=\sum_{i\ne j}\mathbf 1_{\{t_i=t_j\}}
$$
that counts the number of ordered pairs of distinct points of $\mathcal N$
with the same $t$-coordinate. Its expectation is
$$
\E[S_t]=M^{(2)}(D_{\mathrm{time}})=0.
$$
Since $S_t\ge 0$ is integer-valued, $\E[S_t]=0$ implies
$S_t=0$ almost surely by Markov's inequality. Therefore, almost surely, no two distinct atoms
of $\mathcal N$ share the same $t$-coordinate. The proof for almost surely no two atoms have the same $z$-coordinate follows verbatim.

Next, we show that $\mathcal N$ is locally finite. Let $K\subset E$ be a compact set, then by definition, as $\|\rho\|_\infty<\infty$
$$
\E[\mathcal N(K)]=\int\Lambda_x(K)\rho(x)dx<\infty.
$$ 
As $\mathcal N(K)$ is non-negative, this entails $\P(\mathcal N(K)<\infty)=1$. To prove local finiteness, take a countable family of compact sets $\{ K_n\}_n$ such that their union covers $E$ and
$$\P\left(\left(\bigcap_{n}\left\{\mathcal N(K_n)<\infty)\right\}\right)^c\right)\le \sum_{n}\P\left(\mathcal N(K_n)=\infty\right)=0.
$$
This is sufficient to let us conclude with $$\P(\mathcal N(K)<\infty \text{ for all compact }K\subset E)=1.$$

In particular, for every fixed $t^*\in(0,1]$ and $z^*\in \R$, we can show that $\P(\mathcal N(\{t^*\}\times \R)\ge 1)=0$ and $\P(\mathcal N(0,1]\times\{z^*\})\ge1)=0$ by direct computation of the expectation then apply Markov's inequality. This means there is no accumulation of atoms in either the $t$ or $z$ coordinate, \emph{i.e.}\ almost surely 
$$\mathcal N(\{t^*\}\times \R)=0\quad\text{ and }\quad \mathcal N((0,1]\times\{z^*\})=0.$$

By \cite[\S4]{Res87} and \cite[\S5]{HolTod19} what we have proved so far is enough to conclude that $\P(\mathcal N\in D_R)=0$. The proof for $\P(\mathcal N\in D_V)=0$ follows verbatim.
\end{proof}

By Lemma~\ref{lemma: continuity record maps} we are allowed to apply CMT to conclude that $R_n\to h_1(\mathcal N)$ and $V_n\to h_2(\mathcal N)$ both in distribution. It only remains to identify the limiting distribution, \emph{i.e.}\ the distribution of $h_1(\mathcal N)$ and $h_2(\mathcal N)$.

\begin{proof}{Proof of Theorem~\ref{main thm: convergence of PP}}
By definition of $\mathcal N$, the law of $\mathcal N$ has the form $\P(\mathcal N(\cdot)\le k)=\int \P(\Pi_x(\cdot)\le k)\rho(x)dx$ for all $k\ge0$, where $\Pi_x$ is a Poisson random measure on $E$ with intensity $2\rho(x)e^{-z}dtdz$ (in other words, its intensity measure is $\mathrm{Leb}\times\lambda_{G_x}$ where $G_x(z)=2\rho(x)e^{-z}$). By measurablility of the map $h_1$, for all Borel subset $A\subset (0,1]$ (without loss of generality take $A=(a,b]$)
\begin{equation}\label{eqn: record time Cox limit}\P\left((h_1(\mathcal N))(A)\le k\right)=\int_X \P((h_1(\Pi_x))(A)\le k)\rho(x) dx
\end{equation}
where by classical results on record statistics via Poisson point processes \cite{ResRub73}\footnote{The original statement in \cite{ResRub73} was stated for homogeneous Poisson point process, where $\Pi_x$ is inhomogeneous. However, this can be resolved without much difficulty by introducing a change of variable.}, the number of points in $h_1(\Pi_x)$ on $(a,b]$ is Poisson distributed with mean $\log (b/a)$. As this is independent of $x$, the right-hand side of \eqref{eqn: record time Cox limit} after integrating over $x$ equals $\P(\mathcal R(A)\le k)$, where $\mathcal R$ is a Poisson point process with intensity $1/tdt$. By Lemma~\ref{lemma: continuity record maps} and the CMT, $h_1(N_n)$ converges in distribution to $h_1(\mathcal N)$, hence we conclude that the limiting distribution of $\sum_{j}\delta_{j/n}\I_{\{Y_j>M_{j-1}\}}$ converges to that of a Poisson process with intensity $1/t dt$.

Similarly, using an analogous argument, for any Borel set $B\subset \R$,
$$
\P(h_2(\mathcal N)(B)\le k)=\int \P(h_2(\Pi_x)(B)\le k)\rho(x)dx.
$$ 
By \cite[\S5]{HolTod19}, the distribution of $h_2(\Pi_x)$ has intensity measure
\begin{align*}
\lambda_{V}[a,b]&=-\log (-\log G_x(b))+\log (-\log G_x(a))=-\log \left(-\log (e^{-2\rho(x)e^{-b}})\right)+\log \left(-\log (e^{-2\rho(x)e^{-a}})\right)\\
&=b-\log (2\rho(x))-a+\log (2\rho(x))=b-a
\end{align*}
which is again independent of $x$. Then, the limiting distribution of $h_2(N_n)$ equals that of a Poisson point process $\mathcal V$ on $\R$ with intensity $\lambda[a,b]=b-a$.
\end{proof}

\section{Remarks on different observables}
The main results presented in \S\ref{sec: main results} deal with the observable defined for the value of return distances that results in the averaged-Gumbel type for the limiting distribution of the maximum process. The analogous statement for Frechet-type holds. Take $Y_j(x)=d(x,T^jx)^{-\alpha}$ and define the associated point process $N_n=\sum_{j=1}^n \delta_{\left(j/n,\, n^{-\alpha }Y_j\right)}$. Then 
$$
N_n\Rightarrow \mathcal N^F,\text{  $\mathcal N^F$ a Cox process with random intensity $\Lambda_x(dtdz)=2\rho(x){\alpha}z^{-(1+\alpha)}dtdz$}.
$$
The record-value process $V_n$ converges in distribution to a PRM with intensity $\lambda_F$ on $(0,\infty)$ given by $\lambda_F[a,b]=\alpha \log (b/a)$.
For the Weibull-type, we would define $Y_j(x)=C-d(x,T^jx)^{\alpha}$  and $N_n=\sum_{j=1}^n\delta_{\left(j/n,\, n^{\alpha}(Y_j-C)\right)}$. Then 
$$N_n\Rightarrow \mathcal N^W, \text{$\mathcal N^W$ a Cox process with random intensity $\Lambda_x(dtdz)={2\rho(x)}{\alpha}(-z)^{\alpha-1}dtdz$},
$$ and $V_n$ converges in distribution to a PRM with intensity $\Lambda_W$ on satisfying $\lambda_W[a,b]=\alpha\log(b/a)$ as well.
The limiting distribution of the record-time process $R_n$ is universal for all three cases, \emph{i.e.}\ PRM with intensity measure $1/tdt$.

The proofs of these Fr\`echet and Weibull observable cases follow almost verbatim from those of the Gumbel case without needing to change any assumptions or the proof structure. For the Fr\`echet case: the proof of Proposition~\ref{prop: convergence of point process integral} relies on using compactness of the support of the test function $g$ in its $z$ coordinate, to argue that the difference of 
$$g(j/n,Y_j/n^\alpha)-g(j/n,Y^\Delta_j/n^\alpha)$$ is not automatically zero only when $x$ is in some $E_j(r_n)$ set corresponding to return distance $r_n$. Note that $Y_j(x)/n^\alpha\in[a,b]$ implies $d(T^jx,x)\in[1/nb^{1/\alpha},1/na^{1/\alpha}]$, and $Y_j^\Delta(x)/n^\alpha\in[a,b]$ implies $d(T^jx,x)\in[1/nb^{1/\alpha}-\delta,1/na^{1/\alpha}+\delta]$ where $2\delta$ is the diameter of the partition sets $\Delta\in \U_m$. Similarly for the Weibull case, $n^{\alpha}(Y_j(x)-C)\in[a,b]\subset (-\infty,0]$ implies $d(T^x,x)\in[(-b)^{1/\alpha}/n,(-a)^{1/\alpha}/n]$, and similarly for $n^\alpha(Y_j^\Delta-C)$. Consequently, 
$$\|g(j/n,d(\zeta,T^j\cdot)^{-\alpha})\|_{L^1(\mu)}\lesssim n^{-1}\text{ and }
\|g(j/n,\,n^\alpha(d(\zeta,T^j\cdot)^{\alpha}-C))\|_{L^1(\mu)}\lesssim n^{-1}
$$ remain unchanged.
The proofs for convergence of the Laplace functionals for the auxiliary process are analogous to that of Proposition~\ref{prop: uniform convergence for localised point processes},
requiring only changing the parameters of the Poisson variables and the diameters of the annuli sets.

\section*{Disclaimer of AI usage}
This paper used generative AI tools marginally. In particular, no paid plans features were ever employed. The main usage of a large language model (Deepseek) was for
\begin{enumerate}
\item facilitating literature research (including the literature on Cox process, the precise reference of multivariate Le Cam theorem and the checking the precise requirements to apply the Continuous Mapping Theorem), 
\item spotting typographical errors, and 
\item keeping consistency of my notations throughout (especially for the proof of Prop~\ref{prop: convergence of point process integral} and Prop~\ref{prop: uniform convergence for localised point processes})
\end{enumerate}
There were 2 places where LLM did have an impactful contribution to my proofs:
\begin{enumerate}
    \item  during Step 2 of Lemma~\ref{lemma: TV approximation}, where AI pointed out the error terms of the Riemann sum can be dealt with via Lipschitz continuity;
    \item helping me finish the mixing argument in Step 3(b) of the proof of Lemma~\ref{lemma: TV approximation}.
\end{enumerate}
% \section*{Acknowledgement}
% The author would like to thank Prof. Mike Todd for his comments on \S~\ref{sec: proof of record point process convergence}.


\begin{thebibliography}{XXX}
\bibitem[BJK]{BarJurKlo23} B. B\'ar\'any, N. Jurga, and I. Kolossv\'ary, \emph{On the convergence rate of the chaos game}, Int. Math. Res. Not. IMRN \textbf{(5)} (2023), 4456–4500.

\bibitem[Bar]{Bar88} A. Barbour, \emph{Stein’s method and Poisson process convergence}, J. Appl. Probab.\textbf{ 25A}  (1988) 175–184.

\bibitem[BS]{BarSou01}  L. \ Barreira and B. \ Saussol, \emph{Hausdorff dimension of measures via Poincar\'e recurrence}, Comm. Math. Phys., \textbf{219} (2001) 443–463.

\bibitem[Bil]{Bil68} P. Billingsley, \emph{Convergence of Probability Measures}, Wiley, New York (1968).

\bibitem[Bow]{Bow75} R. Bowen, \emph{Equilibrium States and the Ergodic Theory of Anosov Diffeomorphisms}, Springer, Berlin Heidelberg (1975).

\bibitem[CC]{ChaCol13} J. Chazottes and P. Collet, \emph{Poisson approximation for the number of visits to balls in non-uniformly hyperbolic dynamical systems}, Ergodic Theory Dynam.
Systems \textbf{33} (2013) 49–80.

\bibitem[Col]{Col01} P. \ Collet, \emph{Statistics of closest return for some non-uniformly hyperbolic systems} Ergodic Theory and Dynamical Systems, \textbf{21}(2) (2001) 401-420.

\bibitem[Cox]{Cox55} D. Cox, \emph{Some statistical methods connected with series of events}, J. R. Stat. Soc. \textbf{17}(2) (1955) 129–164. 

\bibitem[FFV]{FreFreVai17} A.C.M. Freits, J. M. Freitas and S. Vaienti, \emph{Extreme Value Laws for non stationary processes generated
by sequential and random dynamical systems}, Annales de l’Institut Henri Poincaré - Probabilités et Statistiques \textbf{53}(3) (2017) 1341-1370.

\bibitem[GHN]{GupHolNic11} C. Gupta, M. Holland and M. Nicol, \emph{Extreme value theory and return time statistics for dispersing billiard maps and flows, Lozi maps and Lorenz-like maps}, Ergodic Theory and Dynam. Systems. \textbf{31}(5) (2011) 1363-1390.

%\bibitem[HY]{HayYan19} N. Haydn and F. Yang:, \emph{Local escape rates for $\phi$-mixing dynamical systems}, Ergod. Th. \& Dynam. Sys. \textbf{40}(10) (2020) 2854–2880.

\bibitem[HNT]{HolNicTor12} M. Holland, M. Nicol and A. T\"or\"ok, \emph{Extreme value distributions for non-uniformly expanding dynamical systems}, Trans. Am. Math. Soc. \textbf{364} (2012) 661-688.

\bibitem[HKKP]{HKKP} M. \ Holland, M. \ Kirsebom, P. \  Kunde,  and T. Persson, \emph{Dichotomy results for eventually always hitting time statistics and almost sure growth of extremes}, Trans. Amer. Math. Soc. 377 (2024) 3927-3982.

\bibitem[HT1]{HolTod19} M. \ Holland and M. \ Todd, \emph{Weak convergence to extremal processes and record events for non-uniformly hyperbolic dynamical systems}, Ergodic Theory Dynam. Systems \textbf{39} (2019) 980-1001

\bibitem[HT2]{HolTod25} M. \ Holland and M. \ Todd, \emph{On distributional limit laws for recurrence}, Nonlinearity \textbf{38}, (2025) no.  075028.

\bibitem[HS]{HolStr69} P. Holmes and W. Strawderman, \emph{A note on the waiting times between record observations}, J. Appl. Probab. \textbf{6}(3) (1969) 711-714. 


\bibitem[Kal]{Kal83} O. Kallenberg. Random Measures.  Akademie-Verlag, Berlin, 3. 1983.

\bibitem[Kar]{Kar66} S. Karlin, \emph{A first course in stochastic processes},  Academic Press, New York, 1966.

\bibitem[KKP]{KirKunPer23} P. \ Kirsebom, M. \ Kunde and T. \ Persson, \emph{On Shrinking Targets and Self-Returning Points}, Ann. Sc. Norm. Super. Pisa Cl. Sci., \textbf{24}, (2023) 1499-1535.


\bibitem[LP]{LasPen17} G. Last and M. Penrose, \emph{Lectures on the Poisson Process}. Cambridge University Press (2017). 

\bibitem[LLR]{LLR80} M. Leadbetter, G. Lindgren and H. Rootz\'en, \emph{Extremes and Related Properties of Random Sequences
and Processes} (Springer Series in Statistics). Springer, New York (1980).

\bibitem[LCam]{LCam60} L. Le Cam, \emph{An approximation theorem for the Poisson binomial distribution}, Pacific J. Math.\textbf{10} (1960) 1181-1197.



\bibitem[R1]{Res73} S. Resnick, \emph{Limit laws for record values}, J. Stoch. Process and Their Appl. \textbf{1} (1973) 67-82.

\bibitem[R2]{Res87} S.\ Resnick, \emph{Extreme Values, Regular Variation and Point Processes}. Springer, New York \textbf{1} (1987). 

\bibitem[RR]{ResRub73} S.\ Resnick and M. \ Rubinovitch,\emph{The structure of extremal processes}, Advances Appl. Probab. \textbf{5} (1973) 287-307.


\bibitem[Sou1]{Sau04} B. \ Saussol, \emph{Recurrence rate in rapidly mixing dynamical systems}, Discrete and Continuous Dynam. Systems, \textbf{15} (2004) 259-267. 

\bibitem[Sou2]{Sau06} B. Saussol, \emph{Recurrence rate in rapidly mixing dynamical systems}, Discrete and Continuous Dynamical Systems, \textbf{15}(1) (2006) 259-267.

\end{thebibliography}
\end{document}